\newtheorem{thm}{Theorem}[section]
\newtheorem{prop}[thm]{Proposition}
\newtheorem{lemma}[thm]{Lemma}
\newtheorem{cor}[thm]{Corollary}
\DeclareMathOperator{\tr}{tr}
\DeclareMathOperator{\sym}{Sym}
\author{Salim Ali Altu\u{g}}
\date{}
\begin{document}

\title{Beyond Endoscopy via the Trace Formula - I\\ Poisson Summation and Contributions of Special Representations}

\maketitle

\begin{abstract}

With analytic applications in mind, in particular Beyond Endoscopy (\cite{L1}), we initiate the study of the elliptic part of the trace formula. Incorporating the approximate functional equation to the elliptic part we control the analytic behavior of the volumes of tori that appear in the elliptic part. Furthermore by carefully choosing the truncation parameter in the approximate functional equation we smooth-out the singularities of orbital integrals. Finally by an application of Poisson summation we rewrite the elliptic part so that it is ready to be used in analytic applications, and in particular in Beyond Endoscopy. As a by product we also isolate the contributions of special representations as pointed out in \cite{L1}.

\end{abstract}

\begin{section}{Introduction}

 The Arthur-Selberg trace formula is (arguably) the most general tool in the theory of automorphic forms up to current date. Its development into the current form has taken over half a century and in the mean time it has given rise to many spectacular results on the functoriality conjectures (see for example \cite{A} \S25,26 and \cite{A2}). Almost all of these results go through a comparison of trace formulae on different groups coupled with local harmonic analysis. Although these results being very successful, they only coover a limited number of special cases of the functoriality conjectures, and in general the conjectures are wide open.
 
Relatively recently (in \cite{L1}) a new strategy, which is now known as ``Beyond Endoscopy'', was introduced to attack the general functoriality conjectures. Very roughly it can be described as a two step process: First step is to isolate, by means of the trace formula, the (packets of) cuspidal automorphic representations whose $L$-functions (for a representation of the dual group) have the same order of pole at $s=1$. The second step involves a comparison of this data for two different groups and aims at determining functorial transfers. The method, in particular, proposes a new and non-comparative use of the trace formula. In this paper we will only be concerned with the first of the two steps. The central problem of the first step is to understand the asymptotic behavior of certain averages of trace formulae on a single group with varying test functions (cf. \eqref{1bullet}).

In \cite{L1} the study of these averages was initiated for the group $GL(2)$ and symmetric power representations (cf. \S\ref{beoverview}). At the heart of these averages are the terms coming from the so-called ``elliptic part" of the trace formula (cf. equation \eqref{2eq4}). The elliptic part involves averages of orbital integrals weighed by certain arithmetic data (eg. volumes of tori) varying in families. The highly irregular behavior of these quantities on top of the singularities of orbital integrals make the analysis troublesome. We also note that in \cite{L1} the elliptic part, although numerically analyzed, was not treated.

This paper lays the foundations of a method to study the elliptic part of the trace formula in analytic problems. We introduce the approximate functional equation to the elliptic part in order to resolve the problems of arithmetic and analytic nature at once. We then go on and isolate the contribution of special representations in the elliptic part (cf. \S\ref{obstacles}). Finally we end up with an expression for the elliptic part that is ready to use in analytic applications, particularly in Beyond Endoscopy. The results of this paper will then be used in the subsequent papers (\cite{Ali1} and \cite{Ali2}) where we execute the first non-trivial case of Beyond Endoscopy via the trace formula, and prove bounds\footnote{More precisely we will reprove the classical $\tfrac14$-bound of Kuznetsov via the trace formula.} towards the Ramanujan conjectures respectively.

In order to state our results more precisely and to put them into context, in the next few paragraphs we will briefly go over the idea in \cite{L1}. We will then state the main results of this paper in Theorem \ref{mainthm}.

\begin{subsection}{A Brief Overview of Beyond Endoscopy}\label{beoverview}

In order to simplify notation and to keep the analogy with \cite{L1}, we will be only working over the field $\mathbb{Q}$. Let us begin by describing the general idea of Beyond Endoscopy.

 Let $S$ be a finite set of primes including the archimedean place and $\pi$ be a cuspidal automorphic representation of $G$ unramified outside of $S$. For $p\notin S$, let $A(\pi_p)\in {^L}G$ be the local parameter of $\pi_p$. Finally let $\rho$ be a finite dimensional representation of ${^L}G$. Recall that to this data one can attach the incomplete\footnote{The missing factors for the primes in $S$ are expected to not to effect the analytic behavior of the automorphic $L$-functions.} automorphic $L$-function (cf. \cite{Bo} for details) defined by
\begin{align*}
L^S(s,\pi,\rho):&=\prod_{p\notin S}\det\left(1-\rho\left(A(\pi_p)\right)\cdot p^{-s}\right)^{-1}\\
:&=\sum_{\substack{n\\ \gcd(n,S)=1}}\tfrac{a_{\pi,\rho}(n)}{n^s}\tag{$\bullet$}\label{bullet0}
\end{align*}
Taking negative of the logarithmic derivative of $L^S(s,\pi,\rho)$ we see that the asymptotic expansion of the partial averages\footnote{Note that $\mathfrak{S}_{\pi}(X)$ depends on the chosen finite set of primes $S$. For our purposes we will chose $S$ once and for all, therefore we dropped it from the notation. If the need to emphasize the choice of $S$ arises we will write $\mathfrak{S}_{\pi,\rho}(X,S)$ instead of $\mathfrak{S}_{\pi,\rho}(X)$.}
\begin{align*}
\mathfrak{S}_{\pi,\rho}(X):&=\sum_{\substack{p<X\\ p\notin S}}\log(p)a_{\pi,\rho}(p)
\end{align*}
in terms of powers $X^{\beta}, \, \Re(\beta)\geq1$, give us the location and multiplicity of the poles of $L^S(s,\pi,\rho)$ on and to the right of $\Re(s)=1$. Moreover for certain test functions $f^{p,\rho}_v\in C^{\infty}(G(\mathbb{Q}_v))$ at $v\notin S$ (cf. \S\ref{sectest} or \cite{L1} pg.19), and for arbitrary\footnote{We can also allow functions which are not necessarily compactly supported however this is not the main issue here.} $f_v\in C_c^{\infty}(\mathbb{Q}_v)$, we can express the average of $a_{\pi}(p)$ weighted by $\prod_{v\in S}\tr(\pi_v(f_v))$ as the trace of the operator $R(f^{p,\rho})$ (see \cite{A} pg.7 for the definition of $R(f)$) on the cuspidal part of the spectrum, where $f^{p,\rho}:=\prod_{v\in S}f_v\prod_{v\notin S}f_v^{p,\rho}$. i.e.
\[\sum_{\pi}a_{\pi,\rho}(p)\prod_{v\in S}\tr(\pi_v(f_v))=\tr(R_{cusp}(f^{p,\rho}))\]
 In the above expression we have denoted the orthogonal projection of $R(f^{p,\rho})$ to the cuspidal spectrum by $R_{cusp}(f^{p,\rho})$. The idea in \cite{L1} is to study the asymptotic behavior of 
 \begin{align*}
\mathfrak{S}_{\rho}(X):&=\sum_{\pi}\sum_{\substack{p<X\\ p\notin S}}\log(p)a_{\pi,\rho}(p)\prod_{v\in S}\tr(\pi_v(f_v))\\
&=\sum_{\substack{p<X\\ p\notin S}}\log(p)\tr(R_{cusp}(f^{p,\rho}))\tag{$\bullet\bullet$}\label{1bullet}
 \end{align*}
 by using the trace formula to re-express $\tr(R_{cusp}(f^{p,\rho}))$. At this moment let us pause momentarily to make some comments. 
 
 Firstly, we would like to note that for a $\pi$ which satisfies the Ramanujan conjectures (\cite{Sa1}) $L^S(s,\pi,\rho)$ is holomorphic in the region $\Re(s)>1$, and it is expected that the only possible poles appear at the point $s=1$. Therefore for $\pi$ of the this type, the leading term of the asymptotic expansion of $\mathfrak{S}_{\pi,\rho}(X)$ will only have terms of size $X$. On the other hand one expects that the $\pi$ that violate the Ramanujan conjectures to be functorial transfers from smaller groups and thus can be understood inductively. Therefore one can focus the attention on representations satisfying Ramanujan conjectures (which are called ``Ramanujan type" in \cite{L1}) and study the coefficient of the term $X$ in the asymptotic expansion of $\mathfrak{S}_{\pi,\rho}(X)$. This was the approach taken in \cite{L1}. 

Secondly, we would like to make a brief historical remark. Right after the idea of Beyond Endoscopy came out Sarnak, in his letter to Langlands (\cite{S}), suggested studying a variant of $\mathfrak{S}_{\pi,\rho}(X)$ (for the group $GL(2)$ and $\rho=\sym^k$, the symmetric $k$'th power representation) where the sum over $p$ is replaced by a sum that runs over integers, and to use the Petersson-Kuznetsov formula (\cite{IK} \S16.4), a relative trace formula, to analyze the resulting expressions. For $k\leq 2$ these modifications conveniently allowed the asymptotic expansions of (the modified) $\mathfrak{S}_{\sym^k}(X)$ to be studied (see \cite{Ven} for a treatment of $k=1,2$ and \cite{Her} for related results). For higher $k$ serious analytic problems arise and an analysis has not been carried yet, for more details on this we refer the reader to \cite{S}.

After this detour, we now go back to $\mathfrak{S}_{\rho}(X)$ and \cite{L1}. Since in this paper we will only be considering $GL(2)$ and symmetric power representations, for what follows let us fix an integer $k>0$ and use the notation: $\rho=\rho_k=\sym^k$, $\mathfrak{S}_{\rho}(X)=\mathfrak{S}_k(X)$, $f^{p,\rho}=f^{p,k}$. As we have already noted in the first part of the introduction, a detailed study of $\mathfrak{S}_{\rho}(X)$, for $G=GL(2)$ and $\rho=\sym^k$, was initiated in \cite{L1} (pg.17-34). There the contribution to \eqref{1bullet} of all of the terms but the elliptic ones had been analyzed.

\begin{subsection}{Obstacles in the study of the elliptic part}\label{obstacles}

We have indicated in the introduction that the main difficulties that make the analysis of the elliptic terms not straightforward are the appearance of class numbers of quadratic extensions in various families (cf. equation \eqref{2eq2}) and the singularities of orbital integrals (\S\ref{secsing}). 

Additional complications are caused by contributions of certain special representations. More precisely, as we have remarked in the previous paragraphs, the most fundamental part of the asymptotic expansion of $\mathfrak{S}_k(X)$ is expected to be the term of order $X$, which corresponds to the contribution of those forms of Ramanujan type. However the trace formula\footnote{In \cite{L1} Langlands uses the trace formula in \cite{JL} rather than Arthur's trace formula and we will follow this choice.} expresses the trace of the operator $R(f^{p,\rho})$ on the \emph{discrete part of the spectrum} as a sum of geometric and spectral terms (\cite{JL} pg.271-272). Thus in order to study the asymptotic behavior of $\mathfrak{S}_{k}(X)$ one needs to isolate in the geometric side the contribution of those representations that are not of Ramanujan type. An important example (which is the only example in the setting considered in \cite{L1}) of this is the trivial representation, which we denote by \textbf{1}. Its trace, $\tr(\textbf{1}(f^{p,k}))$, has to be isolated in the geometric side (this contribution occurs in the elliptic part, cf. Theorem \ref{mainthm}) of the trace formula before one can use it to study $\mathfrak{S}_{k}(X)$. Furthermore we would like to isolate this in such a way that the resulting expression is in a form that is suitable for further analysis of $\mathfrak{S}_{k}(X)$.

We would like to note that the contribution of $\tr(\textbf{1}(f^{p,k}))$ was previously studied in \cite{L1} and \cite{FLN}. In \cite{L1} the contribution was approximated and numerical experiments were done for the resulting expression for $\mathfrak{S}_{k}(X)$. In the more recent paper \cite{FLN} the contribution of $\tr(\textbf{1}(f))$, for a general class of functions $f$, was isolated for a general group\footnote{These properties exclude $G=GL(n)$, however their argument can easily be extended to cover this case too.} $G$ that is semi-simple, simply connected and satisfs $G=G_{der}$, where $G_{der}$ denotes the derived group of $G$. In that paper the authors perform Poisson summation on what they call the "Hitchin-Steinberg basis" and identify the contribution of $\tr(\textbf{1}(f))$ as the main term on the dual sum. However their approach, so far, has not allowed a further study of the resulting expression after removing $\tr(\textbf{1}(f))$. Our method in this paper is similar to the one in \cite{FLN} that we also use Poisson summation. The main difference, which allows us to go further and get an expression that is suitable for analysis, is that we use the approximate functional equation (equation \eqref{afe}) in treating the class numbers (i.e. volumes of tori in \cite{FLN}), which amounts to an additive truncation rather than the multiplicative truncation that is used in \cite{FLN}.

Returning back to our discussion, we would finally like to note that in \cite{L1} (pg. 25) the contribution to \eqref{1bullet} of the residues of Eisenstein series\footnote{This is the contribution to the trace formula of term $(vi)$ of \cite{JL}.} (denoted in \cite{L1} by $\tr(\xi_0(f_m^p))$, which in our notation will be denoted by $\tr(\xi_0(f^{p,k}))$) was analyzed and shown to contribute $\alpha_kX+o(X)$ to $\mathfrak{S}_{k}(X)$, where $\alpha_k\in\mathbb{C}$ is given in equations (31) and (32) of \cite{L1}. It was stated there that one expects this contribution to appear in the elliptic part, however this was not shown up to current date. This contribution is also isolated in Theorem \ref{mainthm}.

\end{subsection}

\begin{subsection}{Results of this paper}

In this paper we analyze the elliptic part of the trace formula, isolate the contributions of the special representations that were mentioned in the previous paragraph, and rewrite it in a form that is suitable for analytic applications, in particular for Beyond Endoscopy. In order to state the result we will need to introduce some notation which is explained in detail in \S\ref{sec2}.

Let $G:=GL(2)$ and $\mathbb{A}_{\mathbb{Q}}$ denote the ring of ad\`{e}les of $\mathbb{Q}$. To keep the analogy with \cite{L1} and to avoid notional complications we will be considering automoprhic representations of $G$ over $\mathbb{Q}$ which are unramified at every finite place and whose central characters are trivial on $\mathbb{R}^{\times}_{>0}\hookrightarrow \mathbb{A}_{\mathbb{Q}}^{\times}$. Let $p\in\mathbb{Z}_{>0}$ be a prime and $k\in\mathbb{Z}_{>0}$ be an integer. Let us denote the scalar matrices with positive real entries by $Z_+$. Let $f^{p,k}=f_{\infty}\cdot f_p^{p,k}\prod_{q\neq p}f_{q}^{p,k}$, where $f_{\infty}\in C^{\infty}(Z_+\backslash G(\mathbb{R}))$ and $f_q^{p,k}$ are as in \S\ref{sectest}. We note that for a cuspidal automorphic representation, $\pi$ with central character as above, this choice of test functions satisfy $\tr(\pi(f_{\infty}))\cdot a_{\pi,\rho_k}(p)=\tr(\pi(f^{p,k}))$, where $a_{\pi,\rho_k}(p)$ is still defined by \eqref{bullet0}. 
 
 Let $G(\mathbb{Q})^{\#}$ denote the set of conjugacy classes in $G(\mathbb{Q})$, and $\gamma_{(4y,x)}\in G(\mathbb{Q})^{\#}$ denote the conjugacy class of elements having trace $x$ and determinant $y$. We define the functions $\theta_{\infty}^{\mp}\in C_c(\mathbb{R})$ by $\theta_{\infty}^{\mp}(x):=2|x^2\pm1|\cdot Orb(f_{\infty}; \gamma_{(\mp1,x)})$, where the orbital integral, $Orb(f_{\infty}; \gamma_{(\mp1,x)})$, is as defined in \S\ref{measnorm}. Let $F,H_{0},H_1\in C^{\infty}(\mathbb{R})$ be as in equations \eqref{afedefF}, \eqref{H+} and \eqref{h-} of \S\ref{secafe} respectively. Finally let us denote the elliptic part of the trace formula for the test function $f^{p,k}$ by $\tr(R(f^{p,k}))_{ell}$. 

Then we have the following:

\begin{thm}\label{mainthm}Let $1>\alpha>0$, and $\upsilon>0$ be any number such that $\zeta(u+1)$ does not have any zeros for $|u|<\upsilon$. Let
$\mathcal{C}_{\upsilon}=\begin{setdef}{(0,it)}{t\in (-\infty,-\upsilon)\cup(\upsilon,\infty)}\end{setdef}\cup C_{\upsilon}$, and $C_{\upsilon}$ denotes the left-half of the circle of radius $\upsilon$ around $0$. Then,

\begin{align*}
\tr(R(f^{p,k}))_{ell}&=\tr(\textbf{1}(f^{p,k}))-\tr(\xi_0(f^{p,k}))-\Sigma(\square)-\tfrac{k+1}{2}\sum_{\mp}\int_{x^2\pm1>0}\tfrac{\theta_{\infty}^{\mp}(x)}{\sqrt{|x^2\pm1|}}dx\\
&+\tfrac{p^{k/2}}{2}\sum_{\mp}\left\{\int\theta_{\infty}^{\mp}\left(x\right)\left[\tfrac{2}{\pi i}\int_{(-1)}\tilde{F}(u)\left(\tfrac{(4p^k)^{-\alpha}}{|x^2\pm1|^{\alpha}}\right)^{-u}\tfrac{\zeta(2u+2)}{\zeta(u+2)}\tfrac{\left(1-p^{-(u+1)(k+1)}\right)}{\left(1-p^{-(u+1)}\right)}du\right.\right.\\
&\left.\left.\hspace{0.7in}+\tfrac{\sqrt{\pi}p^{-k/2}}{\sqrt{|x^2\pm1|}}\tfrac{1}{\pi i}\int_{\mathcal{C}_{\upsilon}}\tilde{F}(u)\tfrac{\Gamma\left(\frac{\iota_{(x^2\pm1)}+u}{2}\right)}{\Gamma\left(\frac{\iota_{(x^2\pm1)}+1-u}{2}\right)}\left(\tfrac{\pi (4p^k)^{\alpha-1}}{|x^2\pm1|^{1-\alpha}}\right)^{-u}\tfrac{\zeta(2u)}{\zeta(u+1)}\tfrac{\left(1-p^{-u(k+1)}\right)}{\left(1-p^{-u}\right)}du\right]dx\right\}\\
&+\tfrac{p^{k/2}}{2}\sum_{\mp}\sum_{f=1}^{\infty}\tfrac{1}{f^3}\sum_{l=1}^{\infty}\tfrac{1}{l^2}\sum_{\substack{\xi\in\mathbb{Z}\\ \xi\neq0}}Kl_{l,f}(\xi,\mp p^k)\\
&\hspace{1in}\times\left\{\int\theta_{\infty}^{\mp}\left(x\right)\left[F\left(\tfrac{lf^2(4p^k)^{-\alpha}}{|x^2\pm1|^{\alpha}}\right)+\tfrac{lf^2p^{-k/2}}{2\sqrt{|x^2\pm1|}}H\left(\tfrac{lf^2(4p^k)^{\alpha-1}}{|x^2\pm1|^{1-\alpha}}\right)\right]e\left(\tfrac{-x\xi p^{k/2}}{2 l f^2}\right) dx\right\} 
\end{align*}
Where\footnote{For $q\neq p$ a prime, $Kl_{q,1}(\xi,p^k)$ with $\gcd(\xi,q)=1$ is the classical Kloosterman sum $S(\bar{2}\xi,2\xi p^k;q)$ (cf. \cite{S} equation (70)), hence the notation.},

\begin{align*}
Kl_{l,f}(\xi,\mp p^{k}):&=\sum_{\substack{a\bmod 4lf^2\\ a^2\pm4p^k\equiv 0\bmod f^2\\ \tfrac{a^2\pm4p^k}{f^2}\equiv0,1\bmod 4}}\left(\tfrac{(a^2\pm4p^k)/f^2}{l}\right)e\left(\tfrac{a\xi}{4lf^2}\right)\\
\iota_{x^2\pm1}:&=\begin{cases}0&\text{if $x^2\pm1>0$}\\ 1&\text{if $x^2\pm1<0$}\end{cases}\\
H\left(\tfrac{lf^2(4p^k)^{\alpha-1}}{|x^2\pm1|^{1-\alpha}}\right)&=H_{\iota_{(x^2\pm1)}}\left(\tfrac{lf^2(4p^k)^{\alpha-1}}{|x^2\pm1|^{1-\alpha}}\right)
\end{align*}
$H_{0}, H_1$ being defined in Corollary \ref{approxfuneq}, and
\begin{multline*}
\Sigma(\square):=\sum_{\mp}\sum_{\substack{m\in\mathbb{Z}\\ m^2\pm4p^k= \square}}\theta_{\infty}^{\mp}\left(\tfrac{m}{2p^{k/2}}\right)\sum_{\substack{f^2\mid m^2\pm4p^k}}^{'}\tfrac{1}{f}\sum_{l=1}^{\infty}\tfrac{1}{l}\left(\tfrac{(m^2\pm4p^k)/f^2}{l}\right)\\ 
\times\left[F\left(\tfrac{lf^2}{|m^2\pm4p^k|^{\alpha}}\right)+\tfrac{l f^2}{\sqrt{|m^2\pm4p^k|}}H\left(\tfrac{lf^2}{|m^2\pm4p^k|^{1-\alpha}}\right)\right]
\end{multline*}
Where the $'$ on top of the summation sign means the sum is running over $f \mid (m^2\pm4p^k)$ such that $\tfrac{m^2\pm4p^k}{f^k}\equiv0,1\bmod4$.

\end{thm}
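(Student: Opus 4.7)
The plan is to start from the standard expression for $\tr(R(f^{p,k}))_{ell}$ as a sum over elliptic conjugacy classes with discriminant $\Delta = m^2 \mp 4p^k$, weighted by orbital integrals times volumes of the associated tori (which are essentially Hurwitz class numbers of the order of discriminant $\Delta$). After normalizing by $2p^{k/2}$ so that the variable $x=m/(2p^{k/2})$ interacts cleanly with $\theta_\infty^{\mp}$, the task reduces to controlling the class-number sum
\[
\sum_{\substack{m\in\mathbb{Z}\\m^2\pm 4p^k\neq \square}}\theta_\infty^\mp\!\left(\tfrac{m}{2p^{k/2}}\right)\cdot h(m^2\pm 4p^k),
\]
plus an exceptional contribution $\Sigma(\square)$ from the square discriminants, which have to be peeled off at the outset because there the torus degenerates and the orbital integral picks up a different normalization.

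First, I would invoke the Dirichlet class number formula to write $h(\Delta)$ as an $L$-value $L(1,\chi_\Delta)$ times elementary factors, and expand it via Zagier's identity as $\sum_{f^2\mid\Delta}\tfrac{1}{f}\sum_{l\geq 1}\tfrac{1}{l}\bigl(\tfrac{\Delta/f^2}{l}\bigr)$ (with the usual congruence condition marked by $'$). I would then apply the approximate functional equation from \S\ref{secafe}, with cutoff $F$ and dual cutoff $H_{\iota}$ and split parameter $\alpha\in(0,1)$, to truncate this double sum additively rather than multiplicatively. This produces, for each $(m,f,l)$, the bracketed combination of $F$ and $H$ evaluated at $lf^2/|m^2\pm4p^k|^{\alpha}$ and $lf^2/|m^2\pm4p^k|^{1-\alpha}$ which already matches the shape appearing in $\Sigma(\square)$ and in the last line of the theorem.

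Second, I would swap the order of summation so that $f$ and $l$ are on the outside and $m$ runs on the inside through the arithmetic progressions dictated by $a\bmod 4lf^2$ with $a^2\pm 4p^k\equiv 0 \bmod f^2$ and $(a^2\pm 4p^k)/f^2\equiv 0,1\bmod 4$. Poisson summation in $m$ along these progressions then converts the sum over $m$ into a sum over a dual variable $\xi\in\mathbb{Z}$ with the Kloosterman-type coefficient $Kl_{l,f}(\xi,\mp p^k)$ exactly as defined, because the character sum over $a$ modulo $4lf^2$ is precisely $Kl_{l,f}(\xi,\mp p^k)$. The Fourier transform of the smooth factor $\theta_\infty^\mp(x)\cdot[F(\cdots)+\cdots H(\cdots)]$ produces the $e(-x\xi p^{k/2}/(2lf^2))$ integral on the right-hand side, and the scaling $m\mapsto 2p^{k/2}x$ yields the overall factor $\tfrac{p^{k/2}}{2}$.

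The decisive step, which is where the main obstacle lies, is the identification of the $\xi=0$ contribution. For $\xi=0$ the sum on $f,l$ no longer has oscillation, so one rearranges it as a Mellin integral against $\tilde F(u)$; the $l$-sum gives a ratio of $\zeta$'s and the $f$-sum with the local condition at $p$ gives the factor $(1-p^{-(u+1)(k+1)})/(1-p^{-(u+1)})$ in the first inner integral, and similarly for the second. The contours $(-1)$ and $\mathcal{C}_\upsilon$ are obtained by shifting past the poles of $\zeta(u+2)^{-1}$ and $\zeta(u+1)^{-1}$ (hence the definition of $\mathcal{C}_\upsilon$ avoiding the real segment $|u|<\upsilon$ where zero-free regions of $\zeta$ matter), and the residues picked up at $u=-1$, $u=0$ from $\zeta(2u+2)$, $\zeta(2u)$ together with the residue of $\tilde F$ account exactly for
\[
\tr(\mathbf{1}(f^{p,k}))-\tr(\xi_0(f^{p,k}))-\tfrac{k+1}{2}\sum_{\mp}\int_{x^2\pm 1>0}\tfrac{\theta_\infty^\mp(x)}{\sqrt{|x^2\pm1|}}dx.
\]
Matching these residues with the volume of the trivial representation and with Langlands' formula for $\tr(\xi_0(f^{p,k}))$ (equations (31)--(32) of \cite{L1}) is the main computational obstacle: it requires identifying the poles of $\zeta(u+1)$ and $\zeta(2u+2)$, controlling the Gamma ratio with the index $\iota_{(x^2\pm1)}$, and checking that the $(k+1)$-factor arising from the geometric sum $(1-p^{-u(k+1)})/(1-p^{-u})$ at $u=0$ reproduces $\tfrac{k+1}{2}\int \theta_\infty^\mp/\sqrt{|x^2\pm1|}\,dx$. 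Once these identifications are made, collecting the $\xi\neq 0$ terms yields the final double sum of the theorem, and restoring the previously-subtracted $\Sigma(\square)$ completes the identity.
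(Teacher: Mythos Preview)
Your overall architecture matches the paper's proof almost exactly: rewrite the elliptic part via the class number formula and the Zagier--Bykovskii $L$-series $L(1,\delta)$, insert the approximate functional equation with parameter $A=|m^2\pm4p^k|^\alpha$, split into residue classes modulo $4lf^2$, apply Poisson summation in $m$, and then isolate the $\xi=0$ term by Mellin inversion and contour shifts, computing the resulting Dirichlet series $D(z;\mp p^k)$ as an Euler product. That is precisely the route of Theorems~\ref{pois} and~\ref{6thm1} together with Lemma~\ref{6cor1}.

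There are, however, two places where your account is imprecise enough to be misleading. First, you do not mention why Poisson summation is legitimate: $\theta_\infty^{\mp}$ has square-root singularities at $x=\pm1$, and the paper's key observation (Proposition~\ref{singprop1}) is that the particular choice $A=|m^2\pm4p^k|^\alpha$ makes the combinations $\theta_\infty^{\mp}(x)F(\cdots)$ and $|x^2\pm1|^{-1/2}\theta_\infty^{\mp}(x)H(\cdots)$ smooth, because the Schwartz decay of $F$ and $H$ kills the singularity. Without this your Poisson step is unjustified. Second, your description of the poles in the $\xi=0$ analysis is off: the relevant poles are at $u=0$ (from $\tilde F$, respectively from $\tilde F(u)\Gamma(u/2)$ combined with the simple zero of $\zeta(2u)/\zeta(u+1)\cdot(1-p^{-u(k+1)})/(1-p^{-u})$) and at $u=\pm\tfrac12$ (from $\zeta(2u+2)$ and $\zeta(2u)$), not at $u=-1$. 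The two residues at $u=\pm\tfrac12$ \emph{cancel} because $\tilde F$ is odd; this cancellation is what leaves exactly the $\tr(\mathbf{1})$ term (from the $u=0$ residue of $\tilde F$ in the first integral) and the $-(k+1)\int\theta_\infty^{\mp}/\sqrt{|x^2\pm1|}$ term (from the $u=0$ residue in the second integral, which only appears when $x^2\pm1>0$ since $\Gamma((1+u)/2)$ is regular at $u=0$). Finally, the contour $\mathcal C_\upsilon$ is chosen to avoid \emph{zeros} of $\zeta(u+1)$, not poles of $\zeta(u+1)^{-1}$ in the sense you wrote; the point is that $1/\zeta(u+1)$ has poles at zeta zeros, so one cannot push the contour through $\Re(u)=0$ along the real axis and must detour around $u=0$ on the left.
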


Since it is easy to lose track in the somehow overwhelming notation above, we would like to clarify the following: 
\begin{itemize}
\item The function $f^{p,k}$ are the test functions that are used in the trace formula to arrive at \eqref{1bullet}. 
\item $f_{\infty}\in Z_+\backslash GL_2(\mathbb{R})$ is an arbitrary smooth function. The property $f_{\infty}$ that its orbital integrals are compactly supported is to ensure some generality that may be useful for applications\footnote{For instance one may wish to take $f_{\infty}$ to be a matrix coefficient of some discrete series representation.}. For all practical purposes of the paper one can take $f_{\infty}$ as compactly supported itself. 
\item $F$ is the test function that we choose for the approximate functional equation, and $H_0,H_1$ are transforms of $F$ that appear in the approximate functional equation (cf. \S\ref{secafe}). The explicit choice made in \eqref{afedefF} is for conveniently realizing the Mellin transform, $\tilde{F}$, and its analytic properties (cf. Lemma \ref{lemmaF}). The arguments go through with an arbitrary choice of a Schwarz class function $F$. 
\end{itemize}

We would also like note that although Theorem \ref{mainthm} is stated for the automorphic representations with the ramification and central character restrictions given above, the methods are robust and they easily generalize to cover the most general case.

\end{subsection}

\end{subsection}

\end{section}

\section*{Acknowledgements} We would like to thank professors Robert Langlands and Peter Sarnak for introducing the author to the subject and for very many invaluable conversations throughout the evolution of this work. We would also like to thank Prof. Matthew Young for pointing out the functional equation in equation \eqref{approx1}, and to Ng\^o Bao Ch\^au, Eddie Herman, Arul Shankar and Jacob Tsimerman for many discussions.

\begin{section}{Preliminaries and the Trace Formula}\label{sec2}

In this section we will review the setup of \cite{L1} in more detail. We will first describe the set of automorphic representations that will be of interest to us. We will then fix measure normalizations and review the appropriate choice of test functions to arrive at \eqref{1bullet}. Then we will recall their their orbital integrals as well as the volume factors that appear in the trace formula. Finally we will review the singularities of (archimedean) orbital integrals which will be central to the analysis.

Throughout the paper, unless otherwise explicitly stated, $e(x)$ will denote $e^{2\pi i x}$, $\left(\tfrac{D}{\cdot}\right)$ will denote the Kronecker symbol and $\sqrt{\cdot}$ will mean the positive branch of the square-root function.

\begin{subsection}{The relevant sets of automorphic representations}\label{relevantsets}

Let $G:=GL(2)$ and $\mathbb{A}=\mathbb{A}_{\mathbb{Q}}$ be the ring of ad\`{e}les of $\mathbb{Q}$. We will be interested in automorphic representations $\pi$ of $G(\mathbb{A})$ where $\pi_p$ is unramified for every finite prime $p$, and whose central characters are trivial on $\mathbb{R}^{\times}_{>0}\hookrightarrow \mathbb{A}^{\times}$. Let us denote those matrices in the center of $G(\mathbb{R})$ having positive entries by $Z_+$. Then we can, and will, identify $\mathbb{R}^{\times}_{>0}$ with $Z_+$.  We remark that  since we are insisting $\pi_p$ to be unramified at every finite place and the central character to be trivial on $Z_+$, by strong approximation, the central character of the representation $\pi$ is necessarily trivial (as observed in \cite{L1}).

\end{subsection}

\begin{subsection}{The Trace Formula}

\begin{subsubsection}{Elliptic part of the trace formula and measure normalizations}\label{measnorm}

An element, $\gamma\in G(\mathbb{Q})$, will be called\footnote{The notion of an elliptic element depends on the choice of the field. However since we have fixed the base field to be $\mathbb{Q}$ we drop this from notation and simply say elliptic instead of elliptic over $\mathbb{Q}$. When the time arises to distinguish a field $K$ we will use the notation ``elliptic over $K$".} \emph{elliptic} if its characteristic polynomial is irreducible over $\mathbb{Q}$. For $\gamma\in G(\mathbb{Q})$ let $G_{\gamma}$ denote the centralizer of $\gamma$ in $G$. We also let $G(\mathbb{Q})^{\#}$ to denote the set of $\mathbb{Q}$-conjugacy classes in $G$, and $G(\mathbb{Q})^{\#, ell}$ denote the set of elliptic conjugacy classes. The elliptic part of the trace formula is the sum
\[\sum_{\substack{\gamma\in G(\mathbb{Q})^{\#,ell}}} vol(\gamma)\cdot\prod_{q}Orb(f_q;\gamma)\]
Where,
\begin{align*}
Orb(f_q;\gamma):&=\int_{G_{\gamma}(\mathbb{Q}_q)\backslash G(\mathbb{Q}_q)} f_q(g^{-1}\gamma g)d\bar{g}_q\\
vol(\gamma):&=\int_{Z_+G_{\gamma}(\mathbb{Q})\backslash G_{\gamma}(\mathbb{A})}dg
\end{align*}
and the product over $q$ runs through all the primes including $\infty$.

Measures in the above integrals are normalized as follows\footnote{We note here that the only reason to choose this normalization is to keep the analogy with \cite{L1}. There are more natural choices of measures on both $G$ and the tori (for example see \cite{FLN}).}: On $G$, at a non-archmedean prime $p$ we choose the Haar measure on $G(\mathbb{Q}_p)$ giving measure $1$ to $G(\mathbb{Z}_p)$, and at $\infty$ we choose any Haar measure (the explicit choice is not important for our purposes here). On $G_{\gamma}$ we normalize the measures in a similar manner: 
\begin{itemize}
\item At a non-archimedean prime $p$ we choose the Haar measure giving measure $1$ to $G_{\gamma}(\mathbb{Z}_p)$.
\item At $\infty$, any $\delta\in G(\mathbb{R})$ can be decomposed as $\delta=z_{\delta}\bar{\delta}u_{\delta}$, where $z_{\delta}\in Z_+$ is the central matrix with entries $\sqrt{|\det(\delta)|}$, $u_{\delta}=\left(\begin{smallmatrix}sign(\det(\gamma))&\\&1\end{smallmatrix}\right)$, and $\bar{\delta}\in SL_2(\mathbb{R})$. 

\begin{itemize}

\item If $\gamma\in G(\mathbb{Q})$ that is elliptic over  $\mathbb{R}$ (i.e. has two \emph{non-real} eigenvalues), and let the eigenvalues of $\bar{\delta}\in G_{\gamma}(\mathbb{R})$ be $e^{i\theta},e^{-i\theta}$. We take the measure to be $d\theta$.

\item If $\gamma\in G(\mathbb{Q})$ that is split over $\mathbb{R}$ (i.e. has two distinct \emph{real} eigenvalues), and let the eigenvalues of $\bar{\delta}\in G_{\gamma}(\mathbb{R})$ be $\lambda, \lambda^{-1}$. We take the measure to be $\frac{d \lambda}{\lambda}$.

\end{itemize}
\end{itemize}

\end{subsubsection}

\begin{subsubsection}{Test functions, orbital integrals and volumes of tori}\label{sectest}

In this subsection we quickly go over the relevant choices of test functions to reach \eqref{1bullet}. The details of the calculations of orbital integrals and volumes of tori can be found in pg.19-21 of \cite{L1}. Let $p$ be a prime and $k>0$ be an integer. Let $\rho=\sym^k$ be the symmetry $k$'th power representation of ${^L}G=GL(2,\mathbb{C})$.

For a finite prime $q$ and an integer $r\geq0$ let us first define $f_q^{(r)}\in C_c(\mathbb{Q}_q)$ to be the characteristic function of the set
\[\begin{setdef}{X\in Mat_{2\times 2}(\mathbb{Z}_q)}{|\det(X)|_q=q^{-r}}\end{setdef}\]
Where $Mat_{2\times2}(\mathbb{Z}_q)$ denote the set of two-by-two matrices with coefficients in $\mathbb{Z}_q$ and $|\cdot|_q$ denotes the $q$-adic absolute value on $\mathbb{Q}$. Now let $f_q^{p,k}\in C_c^{\infty}(\mathbb{Q}_q)$ be defined by
\begin{itemize}
\item If $q$ is  finite prime such that $q\neq p$, then $f_q^{p,k}:=f_q^{(0)}$.
\item At $p$: $f_p^{p,k}:=p^{-k/2}f_p^{(k)}$.
\item At $\infty$: $f_{\infty}\in C^{\infty}(Z_+\backslash G(\mathbb{R}))$ is such that its orbital integrals are compactly supported, and other than this condition it is arbitrary. 
\end{itemize}
Finally define $f^{p,k}$ by
\[f^{p,k}:=f_{\infty}\cdot f_p^{p,k}\cdot\prod_{\substack{q\neq p}}f_q^{p,k}\]
Let $\gamma\in G(\mathbb{Q})$ be elliptic and let us denote $(4\det(\gamma),\tr(\gamma))$ by $(N_{\gamma},m_{\gamma})$. Let $m_{\gamma}^2-N_{\gamma}=s_{\gamma}^2D_{\gamma}$, where $D_{\gamma}$ is the discriminant of the quadratic number field $\mathbb{Q}\left(\sqrt{m_{\gamma}^2-N_{\gamma}}\right)$. Normalizing the measures as above, the computations\footnote{In \cite{L1} it is assumed that $\mathbb{Q}(\sqrt{m_{\gamma}^2-N_{\gamma}})\neq \mathbb{Q}(\sqrt{-2})$ or $\mathbb{Q}(\sqrt{-3})$ however the calculations easily generalize to cover those cases.} on pg.17-18 of \cite{L1} gives
\begin{equation}\label{2eq1}
vol(\gamma)=\begin{cases}2h(\gamma)R(\gamma)&\text{if $D_{\gamma}>0$}\\ \frac{2\pi h(\gamma)}{\omega_{\gamma}}&\text{if $D_{\gamma}<0$}\end{cases}
\end{equation}
Where $\omega_{\gamma}, h(\gamma), R(\gamma)$ are the number of roots of unity, the class number and the regulator of $\mathbb{Q}\left(\sqrt{m_{\gamma}^2-N_{\gamma}}\right)$ respectively. Then following Lemma 1 of \cite{L1} we see that if $\det(\gamma)=\pm p^k$ then
\begin{equation}\label{2eq2}
vol(\gamma)\cdot \prod_{q}Orb(f_q^{p,k};\gamma)=p^{-k/2}vol(\gamma)\cdot Orb(f_{\infty};\gamma) \cdot \left\{\sum_{f\mid s_{\gamma}}f\prod_{q\mid f}\left(1-\left(\tfrac{D_{\gamma}}{q}\right)q^{-1}\right)\right\}
\end{equation}
and the left hand side is $0$ otherwise (cf. equation (60) of \cite{L1}). Recall that by Dirichlet's class number formula we have,
\[L\left(1,\left(\tfrac{D_{\gamma}}{\cdot}\right)\right)=\begin{cases}\tfrac{2h(\gamma)R(\gamma)}{\sqrt{D_{\gamma}}}&\text{if $D_{\gamma}>0$}\\ \frac{2\pi h(\gamma)}{\omega_{\gamma}\sqrt{|D_{\gamma}|}}&\text{if $D_{\gamma}<0$}\end{cases}\]
Combining this with \eqref{2eq1} we get
\[vol(\gamma)=\sqrt{|D_{\gamma}|}L\left(1,\left(\tfrac{D_{\gamma}}{\cdot}\right)\right)\]
Substituting this into \eqref{2eq2} gives
\begin{equation*}
vol(\gamma)\cdot \prod_{q}Orb(f_q^{p,k};\gamma)=p^{-k/2} Orb(f_{\infty};\gamma) \sqrt{|D_{\gamma}|}L\left(1,\left(\tfrac{D_{\gamma}}{\cdot}\right)\right) \left\{\sum_{f\mid s_{\gamma}}f\prod_{q\mid f}\left(1-\left(\tfrac{D_{\gamma}}{q}\right)q^{-1}\right)\right\}
\end{equation*}
Finally by using the change of variables $f\mapsto \tfrac{s_{\gamma}}{f}$ and rearranging the terms we get
\begin{equation}\label{2eq3}
vol(\gamma)\cdot \prod_{q}Orb(f_q^{p,k};\gamma)= Orb(f_{\infty};\gamma) \tfrac{|m_{\gamma}^2-N_{\gamma}|^{1/2}}{p^{k/2}}\sum_{f\mid s_{\gamma}}\tfrac{1}{f}L\left(1,\left(\tfrac{(m_{\gamma}^2-N_{\gamma})/f^2}{\cdot}\right)\right) 
\end{equation}
when $\det{\gamma}=\pm p^k$ and the left hand side vanishes otherwise.

\end{subsubsection}

\begin{subsubsection}{Archimedan orbital integrals and their singularities}\label{secsing}
We will recall the asymptotic behavior of archimedean orbital integrals\footnote{The non-archimedean orbital integrals have exactly the same type of singularities however since we will only be considering representations that are unramified at every finite place the archimedean case will be sufficient for our purposes.} in our context. For a more detailed introduction see \cite{K}, \cite{Lab} and \cite{Sh} and references therein.

Let $f_{\infty}\in C_c^{\infty}(Z_+\backslash G(\mathbb{R}))$ be as above and $\gamma\in G(\mathbb{Q})$ be a regular semisimple element (i.e. centralizer has minimal dimension). We are interested in the behavior of $Orb(f;\gamma)$ as $\gamma$ approaches a central element $z\in G(\mathbb{Q})$. As is described in \cite{L1} (page 21 equation (26)) and\footnote{A quick look shows that our measure normalizations on the tori are the same as the ones given in \cite{Sh} up to a constant. } \cite{Sh} (equation (1)), around $z\in Z(\mathbb{R})$: There exists a Weyl group invariant neighborhood, $N_z$, of $z$ and smooth functions $g_1,g_2\in C_c^{\infty}(N_z)$ (depending on the function $f_{\infty}$ and the point $z$) such that 
\[Orb(f_{\infty};\gamma)=g_1(\gamma)+\tfrac{|\gamma_1\gamma_2|^{\frac{1}{2}}}{|\gamma_1-\gamma_2|}g_2(\gamma)\tag{$\star$}\label{2bullet}\]
where $\gamma_1,\gamma_2$ are the eigenvalues of $\gamma$. Furthermore $g_1$ is supported only on the elliptic torus. We also remark that as $\gamma$ approaches a central element the orbital integral, $Orb(f_{\infty};\gamma)$, has a singularity of the prescribed form, $\tfrac{|\gamma_1\gamma_2|^{1/2}}{|\gamma_1-\gamma_2|}$, and that $\tfrac{\gamma_1\gamma_2}{(\gamma_1-\gamma_2)^2}$ is the discriminant function of $G$. 

We will now re-express \eqref{2bullet} in terms of the $(N_{\gamma},m_{\gamma})$ coordinates as in the previous section. Recall that $m_{\gamma}=\tr(\gamma)$, and $N_{\gamma}=4\det(\gamma)$. The discriminant then becomes
\[\tfrac{(\gamma_1-\gamma_2)^2}{\gamma_1\gamma_2}=4\left(\tfrac{m_{\gamma}^2}{N_{\gamma}}-1\right)\]
Then in the $(N,m)$ coordinates the asymptotic expansion of the orbital integral can be re-expressed as
\[Orb(f_{\infty};\gamma)=g_1(m_{\gamma},N_{\gamma})+\tfrac{1}{2}\left|\tfrac{m_{\gamma}^2}{N_{\gamma}}-1\right|^{-1/2}g_2(N_{\gamma},m_{\gamma})\]
Where $g_1,g_2$, by abuse of notation, denotes the corresponding functions in the $(N,m)$ coordinates. 

Also recall that $f_{\infty}$ is assumed to be invariant under $Z_+$, therefore we have $f_{\infty}(z\gamma)=f_{\infty}(g)$ for any $z\in Z_+$. This in particular implies that $g_i(a^2N,am)=g_i(N,m)$ for any $a\in\mathbb{R}^+$ and $i=1,2$. By taking $a=\sqrt{|N|}$ $g_1$ and $g_2$ depend only on the ratio $\tfrac{m}{\sqrt{|N|}}$ and the sign of $N$. Therefore the orbital integrals can be expressed as
\[Orb(f_{\infty};\gamma)=g_1^{sign(N_{\gamma})}\left(\tfrac{m_{\gamma}}{\sqrt{|N_{\gamma}|}}\right)+\tfrac{1}{2}\left|\tfrac{m_{\gamma}^2}{N_{\gamma}}-1\right|^{-1/2}g_2^{sign(N_{\gamma})}\left(\tfrac{m_{\gamma}}{\sqrt{|N_{\gamma}|}}\right)\tag{$\star\star$}\label{2star}\]
where $g_i^{\mp1}(x):=g_i(\mp1,x)$. We also remark that by the note following \eqref{2bullet}, when $sign(N_{\gamma})<0$, the torus $G_{\gamma}$ is split at $\infty$, and $g_1$ vanishes.

\end{subsubsection}

\begin{subsubsection}{Final form of the elliptic part}

Recall that the elliptic part of the trace formula is the sum
\[\sum_{\substack{\gamma\in G(\mathbb{Q})^{\#,ell}}} vol(\gamma)\cdot\prod_{q}Orb(f_q;\gamma)\]
By \eqref{2eq3} this is 
\[\sum_{\substack{\gamma\in G(\mathbb{Q})^{\#,ell}\\ \det(\gamma)=\pm p^{k}}}Orb(f_{\infty};\gamma) \tfrac{|m_{\gamma}^2-N_{\gamma}|^{1/2}}{p^{k/2}}\sum_{f\mid s_{\gamma}}\tfrac{1}{f}L\left(1,\left(\tfrac{(m_{\gamma}^2-N_{\gamma})/f^2}{\cdot}\right)\right) \]
Also recall that only the $\gamma$ for which $N_{\gamma}=4\det(\gamma)=\pm 4p^k$ give a non-zero contribution to the sum above. Therefore $p^{k/2}=\tfrac{\sqrt{|N_{\gamma}|}}{2}$. Hence by \eqref{2star},
\begin{align*}
Orb(f_{\infty};\gamma) \tfrac{|m_{\gamma}^2-N_{\gamma}|^{1/2}}{p^{k/2}}&=2Orb(f_{\infty};\gamma) \left|\tfrac{m_{\gamma}^2}{N_{\gamma}}-1\right|^{1/2}\\
&=2\left|\tfrac{m_{\gamma}^2}{N_{\gamma}}-1\right|^{1/2}g_1^{sign(N_{\gamma})}\left(\tfrac{m_{\gamma}}{\sqrt{|N_{\gamma}|}}\right)+g_2^{sign(N_{\gamma})}\left(\tfrac{m_{\gamma}}{\sqrt{|N_{\gamma}|}}\right)\\
&=\theta_{\infty}^{sign(N_{\gamma})}\left(\tfrac{m_{\gamma}}{\sqrt{|N_{\gamma}|}}\right)
\end{align*}
Where,
\begin{equation*}
\theta_{\infty}^{\mp}(x):=2|x^2\pm1|^{1/2}g_1^{\mp}(x)+g_2^{\mp}(x)\tag{$\star\star\star$}\label{2starstar}
\end{equation*}

Finally, recall that the conjugacy classes in $GL(2)$ are parametrized by their determinant and trace, and a conjugacy class corresponding to determinant $n$ and trace $m$ is elliptic if and only if $m^2-4n\neq \square\in \mathbb{Q}$. Since with our choice of test functions the only contribution to the elliptic part is from $\gamma$ with $\det(\gamma)=\pm p^{k}$, the elliptic part can be written as
\begin{equation}\label{2eq4}
\sum_{\mp}\sum_{\substack{m\in \mathbb{Z}\\ m^2\pm4p^k\neq\square}}\theta_{\infty}^{\mp}\left(\tfrac{m}{2p^{k/2}}\right)\sum_{\substack{f^2\mid m^2\pm 4p^k}}^{'}\tfrac{1}{f}L\left(1,\left(\tfrac{(m^2\pm 4p^k)/f^2}{\cdot}\right)\right)
\end{equation}
Where the $'$ on top of the summation sign indicates that the sum over $f$ is over the square divisors of $m^2\pm 4p^k$ such that $\tfrac{m^2\pm4p^k}{f^2}$ is a discriminant, i.e. $\tfrac{m^2\pm4p^k}{f^2}\equiv 0,1\bmod 4$.

\end{subsubsection}

\end{subsection}

\end{section}

\begin{section}{Approximate Functional Equation}

This section is dedicated to the derivation of an approximate functional equation for the weighted sum of the $L$-values that appear in \eqref{2eq4}. We will first review the functional equation that the sum over $f$ of the $L$-values satisfy. The point to pay attention is that the weights (i.e. the $f$-sum) in \eqref{2eq4} are arranged so that the $f$-sum as a whole satisfies a convenient functional equation. Once we have the functional equation we will derive an approximate functional equation in a routine manner. For most of the material on the approximate functional equation we will follow \S10.4 of \cite{IK}.

\begin{subsection}{A Functional Equation}

Let $\delta\in\mathbb{Z}\backslash \{0\}$ be a discriminant, i.e. $\delta\equiv0,1\bmod 4$, and let $\left(\tfrac{\delta}{\cdot}\right)$ denote the Kronecker symbol. As usual let $L\left(z,\left(\tfrac{\delta}{\cdot}\right)\right)$ denote the Dirichlet $L$-function associated to the character $\left(\tfrac{\delta}{\cdot}\right)$. i.e.
\[L\left(z,\left(\tfrac{\delta}{\cdot}\right)\right)=\sum_{l=1}^{\infty}\tfrac{1}{l^z}\left(\tfrac{\delta}{l}\right)\]
Let $L(z,\delta)$ be defined by
\begin{equation}\label{L}
L(z,\delta):=\sum_{\substack{f^2\mid \delta}}^{'}\tfrac{1}{f^{2z-1}}L\left(z,\left(\tfrac{\delta/f^2}{\cdot}\right)\right)
\end{equation}
Where the $'$ on top of the summation sign, once again, means that the sum is running over $f$ such that $\delta/f^2\equiv 0,1\bmod 4$. Let $\Lambda(z,\delta)$ be the completed $L$-function, i.e.
\[\Lambda(z,\delta):=\left(\tfrac{|\delta|}{\pi}\right)^{\frac{z}{2}}\Gamma\left(\tfrac{z+\iota_{\delta}}{2}\right)L(z,\delta)\]
Where $e_{\delta}$ is defined by
\[\iota_{\delta}=\begin{cases}0&\delta>0\\ 1&\delta<0\end{cases}\tag{\#}\label{sharp}\]
Then the completed $L$-function satisfies the following functional equation:

\begin{prop}\label{approxprop0}
\begin{equation}\label{approx1}
\Lambda(z,\delta)=\Lambda(1-z,\delta)
\end{equation}
In particular we have
\[L(z,\delta)=\left(\tfrac{|\delta|}{\pi}\right)^{\frac12-z}\tfrac{\Gamma\left(\frac{1-z+\iota_{\delta}}{2}\right)}{\Gamma\left(\frac{z+\iota_{\delta}}{2}\right)}L(1-z,\delta)\tag{$6'$}\label{approx1'}\]
\end{prop}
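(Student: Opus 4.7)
The plan is to reduce the statement to the classical functional equation for the Dirichlet $L$-function attached to the fundamental discriminant of $\mathbb{Q}(\sqrt{\delta})$, by factoring out an elementary and symmetric correction that accounts for the extra square factor in $\delta$. I would write $\delta = f_0^2 d$ with $d$ the fundamental discriminant. A short $\bmod\,4$ case analysis shows that the condition $f^2 \mid \delta$ together with $\delta/f^2 \equiv 0, 1 \bmod 4$ is equivalent to $f \mid f_0$, so the sum in \eqref{L} runs precisely over divisors of $f_0$. For each such $f$, setting $m = f_0/f$, the Kronecker identity $\left(\tfrac{\delta/f^2}{n}\right) = \left(\tfrac{d}{n}\right)$ for $\gcd(n,m)=1$ (vanishing otherwise, with the usual care at $p=2$) gives
\[L\left(z, \left(\tfrac{\delta/f^2}{\cdot}\right)\right) = L\left(z, \left(\tfrac{d}{\cdot}\right)\right) \prod_{p \mid m}\left(1 - \left(\tfrac{d}{p}\right) p^{-z}\right).\]

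Substituting back yields $L(z,\delta) = L\!\left(z, \left(\tfrac{d}{\cdot}\right)\right) \cdot M(z)$ with
\[M(z) := \sum_{f \mid f_0} f^{1-2z} \prod_{p \mid f_0/f}\left(1 - \left(\tfrac{d}{p}\right) p^{-z}\right).\]
The factor $M(z)$ is multiplicative in $f_0$, so it factors as $\prod_{p \mid f_0} M_p(z)$; summing two geometric series at each prime $p \mid f_0$ (with $a = v_p(f_0)$) yields
\[M_p(z) = \frac{1 - \left(\tfrac{d}{p}\right) p^{-z} + \left(\tfrac{d}{p}\right) p^{a-(2a+1)z} - p^{a+1-(2a+2)z}}{1 - p^{1-2z}}.\]
A direct substitution $u \mapsto 1/(pu)$ with $u = p^{-z}$ sends both numerator and denominator to themselves up to explicit powers of $u$, producing exactly the scaling $M_p(z) = p^{a(1-2z)} M_p(1-z)$. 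Taking the product over $p \mid f_0$ then gives the global symmetry $M(z) = f_0^{1-2z} M(1-z)$.

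Combining this symmetry of $M$ with the classical functional equation for $L\!\left(z, \left(\tfrac{d}{\cdot}\right)\right)$, and using the identities $|\delta| = f_0^2 |d|$ and $\iota_\delta = \iota_d$, the factors of $f_0$ recombine cleanly to yield $\Lambda(z, \delta) = \Lambda(1-z, \delta)$; the restatement \eqref{approx1'} then follows by dividing out the archimedean factors. The main obstacle in executing this plan is the careful case analysis at $p = 2$—both for the divisibility criterion $\delta/f^2 \equiv 0, 1 \bmod 4$ and for the $2$-part of the Kronecker symbol—but this is pure bookkeeping once $d$ is put into standard normal form.
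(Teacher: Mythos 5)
Your proposal is correct and gives an actual proof of the functional equation, which is more than the paper does: the paper simply cites Lemma~2.1 of Soundararajan--Young \cite{SY} (and notes that the result goes back to Bykovskii and Zagier), so there is no argument in the paper to compare against line-by-line. The route you take---writing $\delta = f_0^2 d$ with $d$ fundamental, showing the $f$-sum in \eqref{L} is exactly the sum over $f \mid f_0$, splitting off the primitive $L$-function via
\[
L(z,\delta) = L\!\left(z, \left(\tfrac{d}{\cdot}\right)\right) M(z), \qquad M(z) = \sum_{f \mid f_0} f^{1-2z} \prod_{p \mid f_0/f}\left(1 - \left(\tfrac{d}{p}\right) p^{-z}\right),
\]
and then proving the local symmetry $M_p(z) = p^{a(1-2z)} M_p(1-z)$ (which I checked: the numerator $1 - \chi p^{-z} + \chi p^{a-(2a+1)z} - p^{a+1-(2a+2)z}$ is indeed carried to itself) so that $M(z) = f_0^{1-2z}M(1-z)$ combines with the functional equation of the primitive $\Lambda_d$ and the identity $|\delta| = f_0^2|d|$, $\iota_\delta = \iota_d$ to give $\Lambda(z,\delta)=\Lambda(1-z,\delta)$---is essentially the classical argument of Zagier that underlies the cited lemma. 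What your write-up buys is a self-contained proof; what the paper's citation buys is brevity. One small point worth making explicit in a final version: the reduction from ``$f^2 \mid \delta$ and $\delta/f^2\equiv 0,1\bmod 4$'' to ``$f \mid f_0$'' requires ruling out $v_2(f)=v_2(f_0)+1$ (which can satisfy $f^2\mid\delta$ when $v_2(d)\in\{2,3\}$), and it is precisely the $\bmod\ 4$ condition that kills those terms, so the $2$-adic bookkeeping you flag as the main obstacle is exactly where the content is.
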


\begin{proof}This is the content of Lemma 2.1 of \cite{SY}. We only note that in the indicated reference it is implicitly assumed that $\delta/f^2$ is a discriminant. It turns out that this functional equation was also observed earlier by several other authors in related contexts (see for instance Bykovskii, \cite{By}, and Zagier, \cite{Za}). We refer the reader to the proof of Lemma 2.1 of \cite{SY} and the references in \S2 of the same reference for more on the history.

\end{proof}

\end{subsection}

\begin{subsection}{Approximate Functional Equation}\label{secafe}

In what follows we will derive an approximate functional equation for $L(z,\delta)$. Everything in this section is standard and we include this section to keep the treatment self contained. We will take almost all of this material from Chapter 10, \S10.4 of \cite{IK}. 

Let $F\in C^{\infty}(\mathbb{R}^{+})$ be
\begin{equation}\label{afedefF}
F(x)=\tfrac{1}{2K_0(2)}\int_{x}^{\infty}e^{-y-\tfrac{1}{y}}\tfrac{dy}{y}
\end{equation}
Where $K_s(z)$ denotes the $s$'th modified Bessel function of the second kind. Then,\\

\begin{lemma}For every $x>0$ we have
\begin{equation}\label{afe5}
0<F(x)<\tfrac{e^{-x}}{2K_0(2)}
\end{equation}
and
\begin{equation}\label{afe6}
0<1-F(x)<\tfrac{e^{-\tfrac{1}{x}}}{2K_0(2)}
\end{equation}
\end{lemma}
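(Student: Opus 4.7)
The plan is to exploit the integral representation $2K_0(2) = \int_0^\infty e^{-y-1/y}\,\frac{dy}{y}$ of the modified Bessel function and the inversion symmetry $y \leftrightarrow 1/y$ of the integrand. With this identity, $F(x)$ is simply the normalized tail $\int_x^\infty/\int_0^\infty$, so the strict bounds $0<F(x)<1$ (and hence also $0<1-F(x)$) are immediate for $x>0$, since truncating a strictly smaller subinterval of a strictly positive integrand produces a strictly smaller positive value.

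For the upper bound $F(x) < e^{-x}/(2K_0(2))$, I would pointwise dominate the integrand by $e^{-y}$. Writing $e^{-y-1/y}/y = e^{-y}\cdot(e^{-1/y}/y)$, it suffices to show $e^{-1/y}/y < 1$ for all $y>0$. This follows by checking that $y\mapsto -\tfrac{1}{y}-\log y$ attains its global maximum at $y=1$ with value $-1$, so $e^{-1/y}/y\leq 1/e<1$ strictly. Hence $\int_x^\infty e^{-y-1/y}\,\frac{dy}{y}<\int_x^\infty e^{-y}\,dy = e^{-x}$, which is exactly what is needed after dividing by $2K_0(2)$.

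For the upper bound $1-F(x)<e^{-1/x}/(2K_0(2))$, the key observation is the functional relation $F(x)+F(1/x)=1$. This follows from the substitution $y\mapsto 1/y$ in $\int_0^x e^{-y-1/y}\,\frac{dy}{y}$, under which the kernel $e^{-y-1/y}$ and the measure $\frac{dy}{y}$ are both invariant while the interval $(0,x)$ transforms into $(1/x,\infty)$. Therefore $1-F(x)=F(1/x)$, and applying the previous paragraph to $F(1/x)$ yields the stated bound. There is really no obstacle here; the lemma is a soft consequence of the normalization and the $y\leftrightarrow 1/y$ symmetry of the kernel, and the only point to be a little careful about is citing the classical identity $2K_0(2) = \int_0^\infty e^{-y-1/y}\,\frac{dy}{y}$ (the specialization $s=0$, $z=2$ of the standard integral representation of $K_s$).
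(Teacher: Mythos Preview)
Your proof is correct. The paper itself does not give an argument here; it simply cites Iwaniec--Kowalski (\cite{IK}, p.~257), so your self-contained derivation via the normalization $2K_0(2)=\int_0^\infty e^{-y-1/y}\,\tfrac{dy}{y}$, the pointwise domination $e^{-1/y}/y\le e^{-1}<1$, and the symmetry $F(x)+F(1/x)=1$ supplies exactly the details the paper omits.
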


\begin{proof}
\cite{IK} pg. 257.
\end{proof}

Let $\tilde{F}(z)$ denote the Mellin transform of $F$. i.e. 
\begin{equation}\label{mel}
\tilde{F}(z)=\int_0^{\infty}F(u)u^{z}\tfrac{du}{u}
\end{equation}
We have the following lemma about the analytic behavior of $\tilde{F}$:\\

\begin{lemma}\label{lemmaF}Explicitly; $\tilde{F}(z)=\tfrac{1}{z}\tfrac{K_z(2)}{K_0(2)}$. It is holomorphic except for a simple pole at $z=0$ with residue $1$. Furthermore, $\tilde{F}(z)$ is odd, and for $z=\sigma+it\in\mathbb{C}$ we have the uniform bound $\tilde{F}(z)\ll |z|^{|\sigma|-1}e^{-\tfrac{\pi}{2}|t|}$

\end{lemma}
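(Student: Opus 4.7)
The strategy is to compute $\tilde{F}(z)$ in closed form in terms of a Bessel function and then read off every claim from known properties of $K_\nu$.

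First I would unfold the definition of $F$ and interchange the two integrations. For $\Re(z)>0$, Fubini is justified by the rapid decay of $e^{-y-1/y}$ and the resulting formula for $F$, and gives
\[
\tilde{F}(z) \;=\; \frac{1}{2K_0(2)}\int_0^\infty u^{z-1}\!\int_u^\infty e^{-y-1/y}\,\frac{dy}{y}\,du \;=\; \frac{1}{2K_0(2)}\int_0^\infty e^{-y-1/y}\frac{1}{y}\int_0^y u^{z-1}du\,dy \;=\; \frac{1}{2z\,K_0(2)}\int_0^\infty e^{-y-1/y}\,y^{z-1}\,dy.
\]
The last integral is exactly $2K_z(2)$ by the standard integral representation of the modified Bessel function, so $\tilde{F}(z)=\frac{1}{z}\frac{K_z(2)}{K_0(2)}$ on $\Re(z)>0$; analytic continuation then extends the identity to all $z$.

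From this formula the next three assertions are immediate. Since $K_z(2)$ is an entire function of the order $z$, the only possible singularity of $\tilde{F}$ comes from the explicit $1/z$, giving a simple pole at $z=0$ whose residue is $K_0(2)/K_0(2)=1$. The classical symmetry $K_{-z}(2)=K_z(2)$ combined with the factor $1/z$ yields $\tilde{F}(-z)=-\tilde{F}(z)$, so $\tilde{F}$ is odd.

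The main work is the uniform bound, which reduces to showing $|K_{\sigma+it}(2)|\ll (1+|z|)^{|\sigma|}e^{-\pi|t|/2}$. I would substitute $y=e^{w}$ to rewrite
\[
K_z(2)=\tfrac{1}{2}\!\int_{-\infty}^\infty e^{-2\cosh w+zw}\,dw,
\]
and then, for $t>0$, shift the contour to $\Im w = \pi/2-\varepsilon$. The shift is legal because $e^{-2\cosh w}$ decays super-exponentially along horizontal lines, and it produces the prefactor $|e^{iz(\pi/2-\varepsilon)}|=e^{-t(\pi/2-\varepsilon)}$; a saddle-point optimisation of $\varepsilon$ (or, equivalently, a direct appeal to Debye's asymptotic $K_\nu(2)\sim\sqrt{\pi/(2\nu)}(e/\nu)^{-\nu}$ for large $|\nu|$ in $|\arg\nu|<\pi-\delta$) produces $|K_{\sigma+it}(2)|\ll (1+|z|)^{\sigma-1/2}e^{-\pi|t|/2}$ for $\sigma\geq 0$. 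The case $\sigma<0$ follows at once from $K_\nu=K_{-\nu}$. Combining this with the extra factor $1/|z|$ gives the claimed $\tilde{F}(z)\ll|z|^{|\sigma|-1}e^{-\pi|t|/2}$ (in fact slightly stronger). The uniform decay in $t$ is the only nontrivial point; the explicit formula and the meromorphy/oddness statements require nothing beyond Fubini and standard identities for $K_\nu$.
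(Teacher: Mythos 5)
Your proposal is correct and takes essentially the same route as the reference the paper cites for this lemma (Iwaniec--Kowalski, pp.\ 257--258): interchange the integrals via Fubini to identify $\tilde F(z)=\tfrac{1}{z}\tfrac{K_z(2)}{K_0(2)}$ from the standard representation $K_\nu(2)=\tfrac12\int_0^\infty e^{-(y+1/y)}y^{\nu-1}\,dy$, read off the simple pole and oddness from $K_z$ being entire in $z$ with $K_{-z}=K_z$, and extract the decay in $t$ from the Bessel asymptotics (contour shift / Debye). The only point worth flagging is that the straight-line contour shift alone gives an extra logarithmic loss near $\sigma=0$; the appeal to the Debye (or uniform large-order) asymptotics that you mention is what actually closes the bound, and it yields the slightly stronger exponent $|\sigma|-\tfrac32$, which of course implies the stated $|\sigma|-1$.
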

\begin{proof}
\cite{IK} pg. 257-258.

\end{proof}

\begin{prop}[Approximate functional equation]\label{approximatefunceq} Let $\delta\in\mathbb{Z}$ be a discriminant (i.e. $\delta\equiv0,1\bmod4$) and $L(z,\delta)$ be defined by \eqref{L}. Then for any $z\in\mathbb{C}$ we have,

\[L(z,\delta)=\sum_{\substack{f^2\mid \delta}}^{'}\tfrac{1}{f^{2z-1}}\sum_{l=1}^{\infty}\tfrac{1}{l^z}\left(\tfrac{\delta/f^2}{l}\right)F\left(\tfrac{lf^2}{A}\right)
+\left(\tfrac{|\delta|}{\pi}\right)^{\frac12-z}\sum_{\substack{f^2\mid \delta}}^{'}\tfrac{1}{f^{1-2z}}\sum_{l=1}^{\infty}\tfrac{1}{l^{1-z}}\left(\tfrac{\delta/f^2}{l}\right)H_{\delta,z}\left(\tfrac{lf^2A}{|\delta|}\right)\]
Where,
\[H_{\iota_{\delta},z}(y):=\tfrac{\pi^{z-\frac12}}{2\pi i}\int_{\Re(u)=1}\tfrac{\Gamma\left(\frac{1+u-z+\iota_{\delta}}{2}\right)}{\Gamma\left(\frac{z-u+\iota_{\delta}}{2}\right)}(\pi y)^{-u}\tilde{F}\left(u\right)du\]
\end{prop}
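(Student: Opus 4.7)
The plan is the classical Mellin-barnes derivation of an approximate functional equation, built on the functional equation in Proposition \ref{approxprop0} and the Mellin inversion for $F$. The starting point is the auxiliary contour integral
\[
I_A(z,\delta):=\frac{1}{2\pi i}\int_{(\sigma)}L(z+u,\delta)\,\tilde{F}(u)\,A^{u}\,du,
\]
taken on a vertical line $\Re(u)=\sigma$ large enough that the double Dirichlet series defining $L(z+u,\delta)$ in \eqref{L} converges absolutely. Expanding that series and interchanging summation with integration—legitimate by absolute convergence together with the uniform bound $\tilde{F}(\sigma+it)\ll|z|^{|\sigma|-1}e^{-\pi|t|/2}$ from Lemma \ref{lemmaF}—reduces the inner integral for each pair $(f,l)$ to the Mellin inversion $\frac{1}{2\pi i}\int_{(\sigma)}(lf^2/A)^{-u}\tilde{F}(u)\,du=F(lf^2/A)$. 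This yields the first sum appearing on the right-hand side of the AFE, so $I_A(z,\delta)=\sum_{f^2\mid\delta}'f^{1-2z}\sum_l l^{-z}(\delta/f^2\mid l)F(lf^2/A)$.

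Next, I would shift the contour in $I_A$ from $\Re(u)=\sigma$ down past the origin to $\Re(u)=-\sigma'$ with $\sigma'>\Re(z)$. Assuming $z$ is not a pole of $L(\cdot,\delta)$, the only singularity crossed is the simple pole of $\tilde{F}$ at $u=0$ with residue $1$ (Lemma \ref{lemmaF}), contributing $L(z,\delta)$ to the residue sum. Hence
\[
L(z,\delta)=I_A(z,\delta)-\frac{1}{2\pi i}\int_{(-\sigma')}L(z+u,\delta)\,\tilde{F}(u)\,A^{u}\,du.
\]
In the remaining integral I substitute $u=-v$ and use that $\tilde{F}$ is odd to rewrite it as $\frac{1}{2\pi i}\int_{(\sigma')}L(z-v,\delta)\tilde{F}(v)A^{-v}\,dv$. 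Now I apply the functional equation \eqref{approx1'} to replace $L(z-v,\delta)$ by $(|\delta|/\pi)^{\frac12-z+v}\,\Gamma\bigl(\tfrac{1-z+v+\iota_\delta}{2}\bigr)\Gamma\bigl(\tfrac{z-v+\iota_\delta}{2}\bigr)^{-1}L(1-z+v,\delta)$, and for $\sigma'$ large enough so that $\Re(1-z+v)>1$ on the contour, I expand $L(1-z+v,\delta)$ as its double Dirichlet series and interchange again.

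After pulling $(|\delta|/\pi)^{\frac12-z}$ and the $(f,l)$-independent factors outside, the remaining inner integral for each pair $(f,l)$ becomes
\[
\frac{1}{2\pi i}\int_{(\sigma')}\frac{\Gamma((1+v-z+\iota_\delta)/2)}{\Gamma((z-v+\iota_\delta)/2)}\,(\pi y)^{-v}\tilde{F}(v)\,dv,\qquad y=\tfrac{lf^2A}{|\delta|}.
\]
Shifting this to the line $\Re(v)=1$ (permissible because the gamma ratio has no poles in the right half-plane for such $z$, and $\tilde{F}$ is regular there), one recognizes the integral, up to the factor $\pi^{z-\frac12}$, as $H_{\iota_\delta,z}(lf^2A/|\delta|)$ in the notation of the proposition. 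Combining, the shifted integral equals the second sum in the claim, and collecting signs produces the identity.

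The main obstacle is essentially bookkeeping: justifying the two interchanges of sum and integral, verifying that the only singularity crossed in the contour shift is the one at $u=0$ (so that one must track the potential pole of $L(z,\delta)$ at $z=1$ coming from trivial characters $(\delta/f^2|\cdot)$, which is excluded in the applications since $\delta=m^2\pm 4p^k$ is required to be a non-square), and pinning down the precise exponent of $\pi$ absorbed into $H_{\iota_\delta,z}$ via its defining normalization. Everything else is routine.
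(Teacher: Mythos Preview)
Your argument is correct and follows essentially the same route as the paper's own proof: start from the Mellin integral $\frac{1}{2\pi i}\int_{(\sigma)}L(z+u,\delta)\tilde{F}(u)A^u\,du$, expand to obtain the $F$-sum, shift past $u=0$ to extract $L(z,\delta)$, then use the substitution $u\mapsto -u$, the oddness of $\tilde{F}$, and the functional equation \eqref{approx1'} to produce the dual $H$-sum. The only cosmetic difference is that the paper simply takes $\sigma'=1$ at the end rather than shifting the inner integral separately, and it does not pause over the possible pole of $L(\cdot,\delta)$ (since in the applications $\delta$ is a non-square discriminant).
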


\begin{proof}

Let $\tilde{F}$ be as in \eqref{mel}. For an arbitrary parameter $A>0$ consider
\[\tfrac{1}{2\pi i}\int_{\Re(u)=\sigma}L(z+u,\delta)\tilde{F}(u)A^udu\]
Where $\sigma$ is such that $\sigma +\Re(z)>1$, and therefore the integral and the sum defining the $L$-function are absolutely convergent. Interchanging the integral and the sum  and using the Mellin inversion formula gives
\begin{align*}
\sum_{\substack{f^2\mid \delta}}^{'}\tfrac{1}{f^{2z-1}}\sum_{l=1}^{\infty}\tfrac{1}{l^z}\left(\tfrac{\delta/f^2}{l}\right)F\left(\tfrac{lf^2}{A}\right)&=\tfrac{1}{2\pi i}\int_{\Re(u)=\sigma}L\left(z+u,\delta\right)A^u\tilde{F}\left(u\right)du\\
&=\tfrac{1}{2\pi i}\int_{\Re\tilde{F}(u)=\sigma}L(z+u,\delta)A^u\tilde{F}\left(u\right)du
\end{align*}
Then shifting the contour to $\Re(u)=\sigma'<0$ picks up the pole of $\tilde{F}(u)$ at $u=0$ and gives
\begin{equation*}
\tfrac{1}{2\pi i}\int_{\Re(u)=\sigma}L(z+u,\delta)A^u\tilde{F}\left(u\right)du=L(z,\delta)+\tfrac{1}{2\pi i}\int_{\Re(u)=\sigma'}L(z+u,\delta)A^u\tilde{F}\left(u\right)du
\end{equation*}
Using the change of variables $u\mapsto -u$ and using the oddness of $\tilde{F}$ transforms the $\sigma'$-integral to
\[\tfrac{1}{2\pi i}\int_{\Re(u)=\sigma'}L(z+u,\delta)A^u\tilde{F}\left(u\right)du=-\tfrac{1}{2\pi i}\int_{\Re(u)=\sigma'}L(z-u,\delta)A^{-u}\tilde{F}\left(u\right)du\]
Finally using the functional equation \eqref{approx1'} gives
\begin{equation*}
\tfrac{1}{2\pi i}\int_{\Re(u)=\sigma'}L(z-u,\delta)A^{-u}\tilde{F}\left(u\right)du=
\tfrac{1}{2\pi i}\int_{\Re(u)=-\sigma'}\left(\tfrac{|\delta|}{\pi}\right)^{\frac12+u-z}\tfrac{\Gamma\left(\frac{1+u-z+\iota_{\delta}}{2}\right)}{\Gamma\left(\frac{z-u+\iota_{\delta}}{2}\right)}L(1-z+u,\delta)A^{-u}\tilde{F}\left(u\right)du
\end{equation*}
Therefore we get,

\[L\left(z,\delta\right)=\sum_{\substack{f^2\mid \delta}}^{'}\tfrac{1}{f^{2z-1}}\sum_{l=1}^{\infty}\tfrac{1}{m^z}\left(\tfrac{\delta/f^2}{l}\right)F\left(\tfrac{lf^2}{A}\right)
+|\delta|^{\frac12-z}\sum_{\substack{f^2\mid \delta}}^{'}\tfrac{1}{f^{1-2z}}\sum_{l=1}^{\infty}\tfrac{1}{l^{1-z}}\left(\tfrac{\delta/f^2}{l}\right)H_{\iota_\delta,z}\left(\tfrac{lf^2A}{|\delta|}\right)\]
We note that in the statement of the proposition we took $\sigma'=1$ for convenience.

\end{proof}

\begin{cor}\label{approxfuneq}Let $\delta\in\mathbb{Z}$ be a discriminant (i.e. $\delta\equiv0,1\bmod4$) and $L(1,\delta)$ be defined by \eqref{L}. Then,
\begin{equation}\label{afe}
L\left(1,\delta\right)=
\sum_{\substack{f^2\mid \delta}}^{'}\tfrac{1}{f}\sum_{l=1}^{\infty}\tfrac{1}{l}\left(\tfrac{\delta/f^2}{l}\right)\left[F\left(\tfrac{lf^2}{A}\right)+\tfrac{ lf^2}{\sqrt{|\delta|}}H_{\iota_{\delta}}\left(\tfrac{lf^2A}{|\delta|}\right)\right]\end{equation}
Where $\iota_{\delta}$ is as defined in \eqref{sharp}, and
\begin{align*}
H_{0}(y):&=H_{0,1}(y)\\
&=\tfrac{\sqrt{\pi}}{2\pi i}\int_{\Re(u)=1}\tfrac{\Gamma\left(\frac{u}{2}\right)}{\Gamma\left(\frac{1-u}{2}\right)}(\pi y)^{-u}\tilde{F}\left(u\right)du\tag{$H_{0}$}\label{H+}\\
H_{1}(y):&=H_{1,1}(y)\\
&=\tfrac{\sqrt{\pi}}{2\pi i}\int_{\Re(u)=1}\tfrac{\Gamma\left(\frac{1+u}{2}\right)}{\Gamma\left(\frac{2-u}{2}\right)}(\pi y)^{-u}\tilde{F}\left(u\right)du\tag{$H_1$}\label{h-}
\end{align*}
\end{cor}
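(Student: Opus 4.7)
My plan is to deduce the corollary as the specialization $z = 1$ of Proposition \ref{approximatefunceq}, essentially collecting and re-arranging the factors so the two sums can be presented under a common summation.

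Concretely, I would begin by writing out the statement of Proposition \ref{approximatefunceq} at $z = 1$. Under this substitution each exponent simplifies immediately: $f^{2z-1}$ becomes $f$, $l^z$ becomes $l$, the dual-side factor $f^{1-2z}$ becomes $f^{-1}$, and $l^{1-z}$ becomes $1$; the gamma-function prefactor $(|\delta|/\pi)^{1/2-z}$ reduces to $\sqrt{\pi/|\delta|}$. So the dual-side term reads
\[
\sqrt{\tfrac{\pi}{|\delta|}}\sum_{f^2\mid\delta}{}^{'} f\sum_{l=1}^{\infty}\left(\tfrac{\delta/f^2}{l}\right)H_{\iota_\delta,1}\!\left(\tfrac{lf^{2}A}{|\delta|}\right).
\]
To put this over the same summation template $\sum' \tfrac{1}{f}\sum \tfrac{1}{l}(\tfrac{\delta/f^{2}}{l})$ as the first sum, I multiply and divide by $lf^{2}$, producing the factor $\tfrac{lf^{2}}{\sqrt{|\delta|}}$ in front of $\sqrt{\pi}\,H_{\iota_\delta,1}$.

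Next I would verify that $\sqrt{\pi}\,H_{\iota_\delta,1}$ coincides with the $H_{\iota_\delta}$ defined in \eqref{H+} and \eqref{h-}. From the integral representation of $H_{\iota_\delta,z}$ in Proposition \ref{approximatefunceq}, substituting $z = 1$ yields the prefactor $\pi^{1-1/2} = \sqrt{\pi}$, and the gamma quotient becomes $\Gamma\!\left(\tfrac{u+\iota_\delta}{2}\right)/\Gamma\!\left(\tfrac{1-u+\iota_\delta}{2}\right)$. For $\iota_\delta = 0$ this is $\Gamma(u/2)/\Gamma((1-u)/2)$, and for $\iota_\delta = 1$ it is $\Gamma((1+u)/2)/\Gamma((2-u)/2)$, matching \eqref{H+} and \eqref{h-} once the extra $\sqrt{\pi}$ absorbed above is included. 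Thus $\sqrt{\pi}\,H_{\iota_\delta,1}(y) = H_{\iota_\delta}(y)$.

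Substituting these identifications back into Proposition \ref{approximatefunceq} and combining the two sums bracket-wise gives
\[
L(1,\delta) = \sum_{f^{2}\mid\delta}{}^{'} \tfrac{1}{f}\sum_{l=1}^{\infty}\tfrac{1}{l}\left(\tfrac{\delta/f^{2}}{l}\right)\!\left[F\!\left(\tfrac{lf^{2}}{A}\right) + \tfrac{lf^{2}}{\sqrt{|\delta|}}H_{\iota_\delta}\!\left(\tfrac{lf^{2}A}{|\delta|}\right)\right],
\]
which is the desired identity. There is no real obstacle here beyond careful bookkeeping; the only point worth double-checking is the choice of contour $\Re(u) = 1$ used to define $H_{\iota_\delta}$, which is inherited directly from the proposition (there $\sigma' = 1$ was taken), and one needs to confirm that at $z = 1$ the gamma factors in the integrand have no poles on or to the left of this contour that would have been crossed during the shift — this is immediate since $\Gamma((u+\iota_\delta)/2)$ has poles only at $u = -\iota_\delta, -\iota_\delta-2, \ldots$, all strictly to the left of $\Re(u) = 0$ when $\iota_\delta \in \{0,1\}$ and with $u = 0$ avoided by the oddness of $\tilde{F}$ already exploited in the proposition.
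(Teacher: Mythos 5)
Your overall approach is exactly the paper's: the Corollary is simply Proposition \ref{approximatefunceq} at $z=1$, after moving the outer powers of $f$, $l$, $|\delta|$ inside the common bracket. The bookkeeping is almost all right, but there is a $\sqrt{\pi}$ inconsistency in the middle of your argument that you should resolve. You take the stated dual-side prefactor $(|\delta|/\pi)^{1/2-z}$ at face value, getting $\sqrt{\pi/|\delta|}$ at $z=1$ and hence the combination $\tfrac{lf^{2}}{\sqrt{|\delta|}}\cdot\sqrt{\pi}\,H_{\iota_\delta,1}$, and then you assert $\sqrt{\pi}\,H_{\iota_\delta,1}=H_{\iota_\delta}$. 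That last identity contradicts the Corollary's own definition $H_{\iota_\delta}(y):=H_{\iota_\delta,1}(y)$ (and contradicts the explicit integrals in \eqref{H+} and \eqref{h-}, which already carry the factor $\pi^{z-1/2}\big|_{z=1}=\sqrt{\pi}$ inside $H_{\iota_\delta,1}$; there is no second $\sqrt\pi$ to absorb).

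The source of the discrepancy is a typo in the \emph{displayed statement} of Proposition \ref{approximatefunceq}: the dual-side prefactor should read $|\delta|^{1/2-z}$, not $(|\delta|/\pi)^{1/2-z}$. This is what the final line of the Proposition's own proof produces — the $\pi^{z-1/2}$ coming from $(|\delta|/\pi)^{1/2-z}=|\delta|^{1/2-z}\pi^{z-1/2}$ is exactly cancelled by the $\pi^{1/2-z}$ left over after packaging $(\pi lf^{2}A/|\delta|)^{-u}$ into the $H_{\iota_\delta,z}$ normalization $\pi^{z-1/2}$. With the corrected prefactor $|\delta|^{1/2-z}\big|_{z=1}=|\delta|^{-1/2}$, the multiply-and-divide by $lf^{2}$ yields $\tfrac{lf^{2}}{\sqrt{|\delta|}}H_{\iota_\delta,1}(y)$ directly, and since $H_{\iota_\delta}=H_{\iota_\delta,1}$ by definition, nothing further needs verification. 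Your final identity is therefore correct, but your intermediate claim $\sqrt{\pi}\,H_{\iota_\delta,1}=H_{\iota_\delta}$ should be replaced by $H_{\iota_\delta,1}=H_{\iota_\delta}$, once you use the version of the Proposition that matches its own proof. Your closing remark about the contour and poles is fine and not actually needed for the Corollary, since no contour is shifted in passing from the Proposition to the Corollary.
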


\end{subsection}

\begin{subsection}{Estimates on $H_{\iota_{\delta}}$}

We have the following bound on $H_{\iota_{\delta}}$:\\

\begin{lemma}\label{afelem2}For any $\Re(x)\geq 1$ we have
\begin{equation}\label{afe9}
H_{\iota_{\delta}}(x)\ll \tfrac{1}{x}e^{-2\sqrt{x}}
\end{equation}
Where the implied constant is absolute.
\end{lemma}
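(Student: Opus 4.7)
The plan is to bound $H_{\iota_\delta}(x)$ by shifting the contour in its Mellin--Barnes representation and applying a saddle-point style estimate. The first step is to simplify the $\Gamma$-ratio. Combining Legendre's duplication formula $\Gamma(z)\Gamma(z+\tfrac12) = \sqrt\pi\,2^{1-2z}\Gamma(2z)$ (applied at $z = (u+\iota_\delta)/2$) with the reflection identity $\Gamma(s)\Gamma(1-s) = \pi/\sin(\pi s)$ yields
$$\frac{\Gamma((u+\iota_\delta)/2)}{\Gamma((1+\iota_\delta - u)/2)} = \frac{2^{1-u}}{\sqrt\pi}\,\Gamma(u)\,T_{\iota_\delta}(u),$$
where $T_0(u) := \cos(\pi u/2)$ and $T_1(u) := \sin(\pi u/2)$. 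Substituting this into \eqref{H+}--\eqref{h-} turns the definition of $H_{\iota_\delta}$ into
$$H_{\iota_\delta}(x) = \frac{1}{\pi i}\int_{\Re u = 1}\Gamma(u)\,T_{\iota_\delta}(u)\,\tilde F(u)\,(2\pi x)^{-u}\,du,$$
whose integrand is holomorphic throughout $\Re u > 0$ (the only possible singularities, coming from $\tilde F$ and $\Gamma((u+\iota_\delta)/2)$, lie at $u\leq 0$).

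Next, I would shift the contour to $\Re u = \sigma$ for a parameter $\sigma \geq 1$ to be chosen. On this line one has the trivial bound $|T_{\iota_\delta}(\sigma+it)| \ll e^{\pi|t|/2}$; combined with Lemma \ref{lemmaF}'s bound $|\tilde F(u)| \ll |u|^{\sigma-1}e^{-\pi|t|/2}$ and the Stirling estimate for $|\Gamma(\sigma+it)|$, the integrand is dominated by a polynomial-in-$|u|$ multiple of $(2\pi|x|)^{-\sigma}e^{-\pi|t|/2}$. Integration over $t$ then gives a bound of the shape
$$|H_{\iota_\delta}(x)| \ll (2\pi|x|)^{-\sigma}\,\Gamma(2\sigma)\,\sigma^{O(1)}.$$
By Stirling this is minimized at $\sigma^* = \tfrac12\sqrt{2\pi|x|}$ with minimum value $\sim e^{-\sqrt{2\pi|x|}}$ up to algebraic factors in $|x|$. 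Since $\sqrt{2\pi}>2$ and $\Re\sqrt x \leq \sqrt{|x|}$, this rate is strictly stronger than $|e^{-2\sqrt x}| = e^{-2\Re\sqrt x}$ for $\Re x \geq 1$, and the algebraic slack absorbs the $\sigma^{O(1)}$ factors as well as the prefactor, producing the claimed $x^{-1}e^{-2\sqrt x}$.

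The main obstacle is tracking the $e^{\pm\pi|t|/2}$ factors on the shifted contour: the trigonometric factor $T_{\iota_\delta}(u)$ contributes $e^{+\pi|t|/2}$, which is exactly cancelled by the decay of $\tilde F$ supplied by Lemma \ref{lemmaF}, so one cannot dispense with that refined bound without losing convergence of the $t$-integral. A secondary issue is the complex case $\Re x\geq 1$ with non-real $x$, which brings in an extra $e^{t\,\arg x}$ from $|(2\pi x)^{-u}|$; since $|\arg x| < \pi/2$, this factor is at most $e^{\pi|t|/2}$ and is absorbed by the remaining decay from Stirling on $\Gamma$.
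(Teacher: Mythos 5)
Your proposal is correct and, while sharing the same overall architecture as the paper (bound the integrand on a shifted vertical line, optimize the abscissa $\sigma$, observe the resulting exponential rate beats $e^{-2\sqrt x}$), it handles the $\Gamma$-ratio by a genuinely different and cleaner route. The paper applies Stirling directly to $\Gamma\left(\tfrac{u+\iota}{2}\right)/\Gamma\left(\tfrac{1+\iota-u}{2}\right)$ and uses the observation $\left|\tfrac1u-1\right|\le 1$ on $\Re u=1$ to control the awkward factor $\left(\tfrac1u-1\right)^{u/2}$; but a complex power $w^{u/2}$ has modulus $|w|^{\Re u/2}e^{-\Im u\cdot\arg w/2}$, so $|w|\le 1$ alone does not give $|w^{u/2}|\le 1$, and in fact the true $\Gamma$-ratio on vertical lines grows polynomially in $|\Im u|$ rather than exhibiting the exponential decay that a literal reading of the paper's bound $(\dagger)$ would suggest. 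Your duplication-plus-reflection reduction $\Gamma\left(\tfrac{u+\iota}{2}\right)/\Gamma\left(\tfrac{1+\iota-u}{2}\right) = \tfrac{2^{1-u}}{\sqrt\pi}\Gamma(u)T_{\iota}(u)$ avoids that pitfall entirely: it makes transparent that the exponential growth $e^{\pi|t|/2}$ lives in $T_\iota$, is exactly cancelled by the $e^{-\pi|t|/2}$ decay of $\tilde F$ from Lemma \ref{lemmaF}, and the residual decay needed for convergence comes from the standard $\Gamma(u)$ factor. What the paper's approach buys is brevity (no algebraic preprocessing of the $\Gamma$-ratio); what yours buys is correctness of the intermediate bound and a clean bookkeeping of exponential factors. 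Two small points worth tightening in your write-up: (i) spell out the Stirling estimate used to pass from $\int_{-\infty}^{\infty}|\Gamma(\sigma+it)|\,|\sigma+it|^{\sigma-1}\,dt$ to $\Gamma(2\sigma)\sigma^{O(1)}$, splitting $|t|\lesssim\sigma$ from $|t|\gtrsim\sigma$; and (ii) for the genuinely complex case note that the extra factor $e^{t\arg x}$ is controlled only when $|\arg x|$ is bounded away from $\pi/2$, so the ``absolute'' constant is really only absolute on the positive real axis (which is the only case the paper ever applies the lemma to, since $H$ is evaluated at positive real arguments).
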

\begin{proof}The only difference between $H_{0}$ and $H_{1}$ is the difference in the $\Gamma$-factors, so we start with bounding those. Recall Stirling's approximation (cf. pg.326 of \cite{SS}):
\[\Gamma(u)=\sqrt{2\pi}\tfrac{u^u}{\sqrt{u}e^u}\left(1+O\left(\tfrac{1}{\sqrt{|u|}}\right)\right)\]
Using this we get
\begin{align*}
\tfrac{\Gamma\left(\frac{u}{2}\right)}{\Gamma\left(\frac{1-u}{2}\right)}&=\left(\tfrac{u}{2e}\right)^{u-\frac{1}{2}}\left(\tfrac1u-1\right)^{\frac u2}\left(1+O\left(\tfrac{1}{\sqrt{|u|}}\right)\right)\\
\tfrac{\Gamma\left(\tfrac{1+u}{2}\right)}{\Gamma\left(\tfrac{2-u}{2}\right)}&=\left(\tfrac{u}{2e}\right)^{u-\tfrac{1}{2}}\left(1+\tfrac{1}{u}\right)^{\tfrac{u}{2}}\left(\tfrac{2}{u}-1\right)^{\tfrac{u-1}{2}}\left(1+O\left(\tfrac{1}{\sqrt{|u|}}\right)\right)
\end{align*}
Note that the map $u\mapsto \tfrac{1}{u}-1$ maps the line $\Re(u)=1$ to the circle centered at $-1/2$ on the real line, with radius $1/2$, and therefore we have $|\tfrac1u-1|\leq1$. Similarly we get $|1+\tfrac1u|\leq2$ and $|\tfrac2u-1|\leq1$. These inequalities then imply that for $\Re(u)=1$,

\[\tfrac{\Gamma\left(\frac{u}{2}\right)}{\Gamma\left(\frac{1-u}{2}\right)},
\tfrac{\Gamma\left(\tfrac{1+u}{2}\right)}{\Gamma\left(\tfrac{2-u}{2}\right)}\ll \left(\tfrac{u}{\sqrt2e}\right)^{u-\frac12}\tag{$\dagger$}\label{bul}\]
Where the implied constant is absolute. Substituting \eqref{bul} into the definitions for $H_{\iota_{\delta}}(x)$, and using the bound on $\tilde{F}(x)$ given in Lemma \ref{lemmaF} we get
\begin{align*}
H_{\iota_{\delta}}(x)&\ll \int_{(1)}|\tfrac{u}{\sqrt2\pi e}|^{\Re(u)-\tfrac{1}{2}}|x|^{-\Re(u)}|u|^{\Re(u)-1}e^{-\tfrac{\pi |\Im(u)|}{2}}du\\
&\ll\int_{(1)}|u|^{2\Re(u)-\tfrac{3}{2}}| e^2x|^{-\Re(u)}e^{-\tfrac{\pi |\Im(u)|}{2}}du
\end{align*}
Shifting the contour to $\Re(u)=\max\{1,\sqrt{\sqrt{2}\pi x}\}$ then gives
\begin{align*}
H_{\iota_{\delta}}(x)&\ll \tfrac{1}{x^{3/4}}e^{-\sqrt{\sqrt{2}\pi x}}\\
&\ll \tfrac{1}{x}e^{-2\sqrt{x}}
\end{align*}
Where the implied constant is absolute. 
\end{proof}

\end{subsection}
\end{section}

\begin{section}{Poisson Summation}

With the notation of \eqref{L} the elliptic part of the trace formula (i.e. equation \eqref{2eq4}) is
\begin{equation*}
\sum_{\mp}\sum_{\substack{m\in \mathbb{Z}\\ m^2\pm 4p^k\neq \square}}\theta_{\infty}^{\mp}\left(\tfrac{m}{2p^{k/2}}\right)L(1,m^2\pm4p^k)
\end{equation*}

Our aim is to apply Poisson summation to the $m$-sum above. This, however, is not straightforward due to the problems caused by the singularities of $\theta_{\infty}^{\mp}$ and by the conditional convergence of the Dirichlet series defining the value of the $L$-functions. In the following paragraphs we will first review the problems and then state the simple but important observation, Proposition \ref{singprop1}, that will allow us to resolve these issues and apply Poisson summation.

\begin{subsection}{Remarks on Poisson Summation}\label{4.1}

As we said there are a few points to be resolved before Poisson summation can be applied. 
\begin{enumerate}

\item The $m$-sum is not running over the complete lattice (it is missing the values for which $m^2\pm4p^k=\square$) and adding these values manually is problematic since the $L$-functions, $L\left(s,\left(\tfrac{m^2\pm4p^k}{\cdot}\right)\right)$, have poles at $s=1$ when $m^2\pm4p^k=\square$.

\item The sums that define the values of the $L$-functions do not converge absolutely, hence the interchange of summations are problematic.

\item  The functions $\theta_{\infty}^{\mp}$ are not smooth. They have singularities of the prescribed type that we have discussed in \S\ref{secsing}.

\end{enumerate}

The first two of these problems are easily resolved by the introduction of the approximate functional equation which replaces the conditionally convergent series defining $L\left(1,\left(\tfrac{m^2\pm4p^k}{\cdot}\right)\right)$ with absolutely (and rapidly) converging sums. Substituting \eqref{afe} in \eqref{2eq4} results in
\begin{equation*}
\sum_{\mp}\sum_{\substack{m\in\mathbb{Z}\\ m^2\pm4p^k\neq \square}}\theta_{\infty}^{\mp}\left(\tfrac{m}{2p^{k/2}}\right)\sum_{\substack{f^2\mid m^2\pm4p^k}}^{'}\tfrac{1}{f}\sum_{l=1}^{\infty}\tfrac{1}{l}\left(\tfrac{(m^2\pm4p^k)/f^2}{l}\right)\left[F\left(\tfrac{lf^2}{A}\right)+\tfrac{l f^2}{\sqrt{|m^2\pm4p^k|}}H\left(\tfrac{lf^2A}{|m^2\pm4p^k|}\right)\right]\tag{$4'$}\label{elafe}
\end{equation*}
Where in order to not to complicate the notation we denoted $H_{\iota_{m^2\pm4p^k}}$ by $H$ keeping the dependence on $m$ and $p$ implicit. 

\end{subsection}

\begin{subsection}{Smoothing and Poisson Summation}

Although introducing the approximate functional equation resolves the first two problems it does not immediately resolve the third. The crucial observation, stated in the next proposition, is that by choosing the parameter $A$ appropriately we can smooth out the function $\theta_{\infty}^{\mp}$ which allows us to use Poisson summation without trouble.

 \begin{prop}\label{singprop1}Let $\alpha>0$ and $\Phi(x)\in \mathcal{S}(\mathbb{R})$ be a Scwartz class function. Then the functions $\theta_{\infty}^{\mp}(x)\Phi(|1-x^2|^{-\alpha})$ and $|1-x^2|^{-1/2}\theta_{\infty}^{\mp}(x)\Phi(|1-x^2|^{-\alpha})$ are both smooth.

\end{prop}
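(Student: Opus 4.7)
The plan is to reduce the proposition to a single-variable smoothness statement and then pull back along the smooth map $x \mapsto t := x^2 \pm 1$. Concretely, I would first prove the following claim: for any $\alpha > 0$, $\beta \in \mathbb{R}$, and $\Phi \in \mathcal{S}(\mathbb{R})$, the function $\psi_\beta(t) := |t|^\beta \Phi(|t|^{-\alpha})$, extended by $\psi_\beta(0) := 0$, lies in $C^\infty(\mathbb{R})$ and vanishes to infinite order at $t = 0$. Granting this, both halves of the proposition follow from the local expansion $\theta_\infty^\mp(x) = 2|t|^{1/2} g_1^\mp(x) + g_2^\mp(x)$ of $(\star\star\star)$, valid in a neighborhood of the singular locus $t = 0$ (away from which $\theta_\infty^\mp$ and $\Phi(|t|^{-\alpha})$ are already smooth): the first function equals $2 g_1^\mp(x)\, \psi_{1/2}(t) + g_2^\mp(x)\, \psi_0(t)$ and the second equals $2 g_1^\mp(x)\, \psi_0(t) + g_2^\mp(x)\, \psi_{-1/2}(t)$, both smooth in $x$ since $g_1^\mp, g_2^\mp$ and $x \mapsto t$ are smooth. (Note that when the sign is chosen so that $t$ does not vanish on $\mathbb{R}$, the statement is automatic.)

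For the single-variable claim, smoothness on $\mathbb{R}\setminus\{0\}$ is immediate from the chain rule. At $t = 0$ the key input is the Schwartz decay of $\Phi$: for any $M, j \geq 0$ there exists $C_{j,M}$ with $|\Phi^{(j)}(y)| \leq C_{j,M}\, y^{-M}$ for $y \geq 1$, hence $|\Phi^{(j)}(|t|^{-\alpha})| \leq C_{j,M}\,|t|^{\alpha M}$ for $|t|$ small. By a Fa\`a di Bruno expansion on each of $(0,\infty)$ and $(-\infty,0)$, the $k$-th derivative $\psi_\beta^{(k)}(t)$ is a finite linear combination of terms of the shape $|t|^{\beta - m}\, \Phi^{(j)}(|t|^{-\alpha})$ with $0 \leq j \leq k$ and $m$ bounded in terms of $k$ and $\alpha$. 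Each such term is $O(|t|^{\alpha M - m + \beta})$ for arbitrary $M$, hence $o(|t|^N)$ for every $N$, so $\psi_\beta^{(k)}(t) \to 0$ as $t \to 0^{\pm}$.

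To upgrade these one-sided vanishing statements to $\psi_\beta \in C^\infty(\mathbb{R})$, I would run a routine induction on $k$: if $\psi_\beta \in C^{k-1}(\mathbb{R})$ with $\psi_\beta^{(k-1)}(0) = 0$, and if $\psi_\beta^{(k-1)}$ is differentiable on $\mathbb{R}\setminus\{0\}$ with $\psi_\beta^{(k)}(t) \to 0$ as $t \to 0$, then a standard mean value theorem argument forces the two-sided derivative $\psi_\beta^{(k)}(0)$ to exist and equal $0$, and $\psi_\beta^{(k)}$ is continuous at $0$. Starting from the $k = 0$ case (continuity at $0$) closes the induction.

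I do not anticipate a genuine obstacle. The whole proposition is essentially the observation that the Schwartz cutoff $\Phi(|t|^{-\alpha})$ vanishes to infinite order at $t = 0$ and therefore absorbs both the $|t|^{1/2}$-type cusp coming from $\theta_\infty^\mp$ and the stronger $|t|^{-1/2}$ singularity appearing in the second function. The only step asking for any care is the combinatorial bookkeeping in Fa\`a di Bruno when $\beta = -1/2$, but the Schwartz decay dominates uniformly in $\beta$, so nothing deeper than the above is needed.
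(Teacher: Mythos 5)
Your proposal is correct, and it rests on the same underlying observation as the paper: the Schwartz decay of $\Phi$ at infinity forces $\Phi(|t|^{-\alpha})$ and all its derivatives to vanish to infinite order as $t\to 0$, which dominates the algebraic singularities $|t|^{\pm 1/2}$ coming from the expansion \eqref{2starstar} of $\theta_{\infty}^{\mp}$. Where you differ from the paper is in the organization. The paper works directly with the difference quotient at $x=1$, shows the first (one-sided) derivative vanishes by choosing $M$ large, and then asserts that ``the same argument'' handles all higher derivatives without writing out the bookkeeping; it also glosses over the fact that for the second function the prefactor $|1-x^2|^{-1/2}\theta_{\infty}^{\mp}(x)$ is no longer bounded, so the bound needed is $M\alpha>3/2$ rather than $M\alpha>1$ (harmless, since $M$ is arbitrary, but not literally ``verbatim''). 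You instead isolate a clean one-variable lemma --- that $\psi_{\beta}(t):=|t|^{\beta}\Phi(|t|^{-\alpha})$ extends to a $C^{\infty}$ function vanishing to infinite order at $t=0$ for every real $\beta$ --- and pull it back along $x\mapsto t$, with the higher-order vanishing made explicit via Fa\`a di Bruno and the two-sided smoothness recovered by the standard mean value theorem upgrade. That buys you two things: the higher-derivative case is genuinely proved rather than asserted, and the $\beta$-parametrized lemma handles both the $|t|^{1/2}$ factor from $g_1^{\mp}$ and the $|t|^{-1/2}$ prefactor of the second function uniformly, so there is no separate case to check. The only cosmetic caveat is that the proposition's $|1-x^2|$ should be read as $|x^2\pm1|$ to match how it is invoked afterward, as you implicitly do when you set $t=x^2\pm1$ and note the other sign is vacuous.
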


\begin{proof}
By \eqref{2starstar} we see that the only problematic points are $x=\pm1$. Without loss of generality we can take $x=1$ since the argument is the same for both points. Furthermore the argument is verbatim for both functions so without loss of generality we will treat the first function. We will show that both the left and right derivatives of the functions at $x=1$ are $0$, which will show that the function is differentiable. It will then be clear from the proof that the same argument shows that left and right derivatives of all orders exit and are $0$.

We begin with the left derivative. Consider the difference quotient,
\begin{align*}
\lim_{h\rightarrow0^+ }\tfrac{\theta_{\infty}^{\mp}\left(1-h\right)\Phi\left((1-(1-h)^2)^{-\alpha}\right)}{h}&=\lim_{h\rightarrow0^+ }\tfrac{\theta_{\infty}^{\mp}\left(1-h\right)\Phi\left((2h-h^2)^{-\alpha}\right)}{h}
\end{align*}

Since $\Phi$ is Schwarz class, for any $M>0$ we have 
\[\Phi(x)=O_M(x^{-M})\]
Therefore as $h\rightarrow0^+$
\[\Phi\left((2h-h^2)^{-\alpha}\right)=O_M\left((2h-h^2)^{M\alpha}\right)\]

Therefore,
\[\tfrac{\theta_{\infty}^{\mp}\left(1-h\right)\Phi\left((2h-h^2)^{-\alpha}\right)}{h}=O_M\left(\tfrac{\theta_{\infty}^{\mp}(1-h)}{h}(2h-h^2)^{M\alpha}\right)\]

By \eqref{2star} $\theta_{\infty}^{\mp}$ is bounded and hence we see that the limit is $0$. Now note that the same argument applies verbatim to the right derivative hence proves differentiability. Since $M$ was arbitrary the same argument proves that all the derivatives exists. 

\end{proof}

Recall that in \eqref{afe} the constant $A>0$ is yet to be chosen. By Proposition \ref{singprop1} and estimates in Lemma \ref{afelem2}, for any $1>\alpha>0$ if we choose $A=|m^2\pm4p^k|^{\alpha}$ then both 
\[\theta_{\infty}^{\mp}\left(\tfrac{m}{2p^{k/2}}\right)F\left(\tfrac{lf^2}{|m^2\pm4p^k|^{\alpha}}\right)\]
and
\[|m^2\pm4p^k|^{-1/2}\theta_{\infty}^{\mp}\left(\tfrac{m}{2p^{1/2}}\right)H\left(\tfrac{lf^2}{|m^2\pm4p^k|^{1-\alpha}}\right)\]
are smooth functions of the variable $m$, and hence Poisson summation can be applied.

\begin{thm}\label{pois}Let $1>\alpha>0$ and set $A=|m^2\pm 4p^k|^{\alpha}$ in \eqref{afe}. Then 
\begin{multline}\label{4eq13}
\eqref{2eq4}+\Sigma(\square)=\tfrac{p^{k/2}}{2}\sum_{\mp}\sum_{f=1}^{\infty}\tfrac{1}{f^3}\sum_{l=1}^{\infty}\tfrac{1}{l^2}\\
\sum_{\xi\in\mathbb{Z}}\left\{\int\theta_{\infty}^{\mp}\left(x\right)\left[F\left(\tfrac{lf^2(4p^k)^{-\alpha}}{|x^2\pm1|^{\alpha}}\right)+\tfrac{lf^2p^{-k/2}}{2\sqrt{|x^2\pm1|}}H\left(\tfrac{lf^2(4p^k)^{\alpha-1}}{|x^2\pm1|^{1-\alpha}}\right)\right]e\left(\tfrac{-x\xi p^{k/2}}{2 l f^2}\right) dx\right\} \cdot Kl_{l,f}(\xi,\mp p^k)
\end{multline}
Where\footnote{For $q\neq p$ a prime, $Kl_{q,1}(\xi,p^k)$ with $\gcd(\xi,q)=1$ is the classical Kloosterman sum $S(\bar{2}\xi,2\xi p^k;q)$ (cf. \cite{S} equation (70)), hence the notation.},

\begin{align*}
Kl_{l,f}(\xi,\mp p^{k}):&=\sum_{\substack{a\bmod 4lf^2\\ a^2\pm4p^k\equiv 0\bmod f^2\\ \tfrac{a^2\pm4p^k}{f^2}\equiv0,1\bmod 4}}\left(\tfrac{(a^2\pm4p^k)/f^2}{l}\right)e\left(\tfrac{a\xi}{4lf^2}\right)\\
H\left(\tfrac{lf^2(4p^k)^{\alpha-1}}{|x^2\pm1|^{1-\alpha}}\right)&=\begin{cases}H_0\left(\tfrac{lf^2(4p^k)^{\alpha-1}}{|x^2\pm1|^{1-\alpha}}\right)&\text{if $x^2\pm1>0$}\\ H_1\left(\tfrac{lf^2(4p^k)^{\alpha-1}}{|x^2\pm1|^{1-\alpha}}\right)& \text{if $x^2\pm1<0$}\end{cases}
\end{align*}
$H_{0,1}$ being defined in Corollary \ref{approxfuneq}, and
\begin{equation*}
\Sigma(\square):=\sum_{\mp}\sum_{\substack{m\in\mathbb{Z}\\ m^2\pm4p^k= \square}}\theta_{\infty}^{\mp}\left(\tfrac{m}{2p^{k/2}}\right)\sum_{\substack{f^2\mid m^2\pm4p^k}}^{'}\tfrac{1}{f}\sum_{l=1}^{\infty}\tfrac{1}{l}\left(\tfrac{(m^2\pm4p^k)/f^2}{l}\right)\left[F\left(\tfrac{lf^2}{A}\right)+\tfrac{l f^2}{\sqrt{|m^2\pm4p^k|}}H\left(\tfrac{lf^2A}{|m^2\pm4p^k|}\right)\right]
\end{equation*}

\end{thm}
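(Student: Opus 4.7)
The plan is to start from~\eqref{elafe}, add $\Sigma(\square)$ so that the $m$-sum runs over all of $\mathbb{Z}$, specialize the AFE parameter to $A = |m^2 \pm 4p^k|^\alpha$, interchange summations to bring $f$ and $l$ to the outside, and apply classical Poisson summation to the inner sum over $m$. The rapid decay $F(x) \ll e^{-x}$ from~\eqref{afe5} and $H_{\iota_\delta}(x) \ll x^{-1}e^{-2\sqrt{x}}$ from Lemma~\ref{afelem2}, together with the compact support of $\theta_\infty^\mp$, makes the triple sum absolutely convergent (in fact rapidly convergent in both $l$ and $f$) and justifies the interchange.

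Once the sums are reordered, partition the $m$-sum into residue classes modulo $4lf^2$. The divisibility $f^2 \mid m^2\pm 4p^k$, the discriminant congruence $\tfrac{m^2\pm 4p^k}{f^2}\equiv 0,1\bmod 4$, and the Kronecker symbol $\bigl(\tfrac{(m^2\pm 4p^k)/f^2}{l}\bigr)$ all depend only on $m\bmod 4lf^2$, so writing $m = a + 4lf^2 n$ for $a$ in a system of residues pulls the entire arithmetic weight out of the inner sum over $n\in\mathbb{Z}$. The function
\[\phi(m) := \theta_\infty^\mp\!\left(\tfrac{m}{2p^{k/2}}\right)\!\left[F\!\left(\tfrac{lf^2}{|m^2\pm 4p^k|^\alpha}\right) + \tfrac{lf^2}{\sqrt{|m^2\pm 4p^k|}}\,H\!\left(\tfrac{lf^2}{|m^2\pm 4p^k|^{1-\alpha}}\right)\right]\]
is Schwartz on $\mathbb{R}$ by Proposition~\ref{singprop1} and the decay of $F, H$, so classical Poisson summation
\[\sum_{n\in\mathbb{Z}}\phi(a+4lf^2 n) = \tfrac{1}{4lf^2}\sum_{\xi\in\mathbb{Z}}\widehat{\phi}\!\left(\tfrac{\xi}{4lf^2}\right) e\!\left(\tfrac{a\xi}{4lf^2}\right)\]
applies. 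Summing the exponential against the arithmetic weight on $a$ yields precisely $Kl_{l,f}(\xi,\mp p^k)$.

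A change of variable $m = 2p^{k/2} x$ in $\widehat{\phi}$ replaces $m^2\pm 4p^k$ by $4p^k(x^2\pm 1)$, produces a Jacobian $2p^{k/2}$, and gives the oscillation $e(-x\xi p^{k/2}/(2lf^2))$; this turns the arguments of $F$ and $H$ into $\tfrac{lf^2(4p^k)^{-\alpha}}{|x^2\pm 1|^\alpha}$ and $\tfrac{lf^2(4p^k)^{\alpha-1}}{|x^2\pm 1|^{1-\alpha}}$, and the extra $p^{-k/2}$ appearing in front of the $H$-term in the theorem comes from $\tfrac{1}{\sqrt{|m^2\pm 4p^k|}} = \tfrac{1}{2p^{k/2}\sqrt{|x^2\pm 1|}}$. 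Assembling the $\tfrac{1}{lf}$ coefficient from the $L$-value weight, the $\tfrac{1}{4lf^2}$ from Poisson, and the $2p^{k/2}$ Jacobian produces the prefactor $\tfrac{p^{k/2}}{2l^2f^3}$ stated.

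The only delicate point is the smoothness of $\phi$ needed to invoke Poisson summation: away from $x = \pm 1$ it is immediate, while at the singularities of $\theta_\infty^\mp$ the $m$-dependent choice $A = |m^2\pm 4p^k|^\alpha$ makes the arguments of $F$ and $H$ blow up, and their Schwartz-class decay then annihilates the singularity—this is exactly Proposition~\ref{singprop1}, and is the one place where the design of the truncation parameter earns its keep. The remaining ingredients are mechanical.
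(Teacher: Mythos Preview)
Your argument is correct and follows essentially the same route as the paper: complete the $m$-sum by adding $\Sigma(\square)$, use the $4lf^2$-periodicity of the arithmetic weight to split into residue classes, and apply Poisson summation to the inner sum (smoothness being supplied by Proposition~\ref{singprop1}). You have in fact supplied more detail than the paper does---the explicit tracking of the Jacobian $2p^{k/2}$, the $1/(4lf^2)$ from Poisson, and the $1/(lf)$ from the approximate functional equation to assemble the prefactor $p^{k/2}/(2l^2f^3)$ is left implicit there---but the skeleton is identical.
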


\begin{proof} Since the $l$-sums in \eqref{elafe} converge absolutely we can add and subtract the values of $m\in \mathbb{Z}$ for which $m^2\pm4p^k=\square$ to \eqref{elafe}. Therefore $\eqref{2eq4}$ can be written as
\begin{equation*}
\sum_{\mp}\sum_{\substack{m\in\mathbb{Z}}}\theta_{\infty}^{\mp}\left(\tfrac{m}{2p^{k/2}}\right)\sum_{\substack{f^2\mid m^2\pm4p^k}}^{'}\tfrac{1}{f}\sum_{l=1}^{\infty}\tfrac{1}{l}\left(\tfrac{(m^2\pm4p^k)/f^2}{l}\right)\left[F\left(\tfrac{lf^2}{A}\right)+\tfrac{l f^2}{\sqrt{|m^2\pm4p^k|}}H\left(\tfrac{lf^2A}{|m^2\pm4p^k|}\right)\right]-\Sigma(\square)
\end{equation*}
The sum $\Sigma(\square)$ is the second term on the left iof \eqref{4eq13} and will not be analyzed any further. So from now on we focus on the first sum. Note the Kronecker symbol $\left(\tfrac{(m^2\pm4p^k)/f^2}{l}\right)$ as well as the condition that $\tfrac{m^2\pm4p^k}{f^2}\equiv 0,1\bmod 4$ are periodic (in $m$)  $\bmod 4lf^2$. Therefore by interchanging the $f$ and $l$-sums with the $m$-sum (which we can do because the $l$-sum converges absolutely and the $f$-sum is finite), and breaking the $m$-sum into arithmetic progressions $\bmod 4lf^2$ we can rewrite the first sum as follows:
\begin{multline*}
\sum_{\mp}\sum_{f=1}^{\infty}\tfrac{1}{f}\sum_{l=1}^{\infty}\tfrac{1}{l}\sum_{\substack{a\bmod 4lf^2\\ a^2\pm4p^k\equiv 0\bmod f^2\\ \tfrac{a^2\pm4p^k}{f^2}\equiv 0,1\bmod 4}}\left(\tfrac{(m^2\pm4p^k)/f^2}{l}\right)\\ 
\sum_{\substack{m\in\mathbb{Z}\\ m\equiv a\bmod 4lf^2}}\theta_{\infty}^{\mp}\left(\tfrac{m}{2p^{k/2}}\right)\left[F\left(\tfrac{lf^2}{|m^2\pm 4p^k|^{\alpha}}\right)+\tfrac{ lf^2}{\sqrt{|m^2\pm4p^k|}}H\left(\tfrac{lf^2}{|m^2\pm4p^k|^{1-\alpha}}\right)\right]
\end{multline*}

 Applying Poisson summation to the $m$-sum (which we can by Proposition \ref{singprop1}, i.e. see the argument prior to the statement of the theorem) proves the theorem.

\end{proof}

\end{subsection}
\end{section}

\begin{section}{An Auxiliary Dirichlet Series}

For any $n\in \mathbb{Z}$ and $z\in \mathbb{C}$, let $D(z;n)$ be defined by

\begin{equation}\label{5eq14}
D(z;n):=\sum_{f=1}^{\infty}\tfrac{1}{f^{2z+1}}\sum_{l=1}^{\infty}\tfrac{Kl_{l,f}(0,n)}{l^{z+1}}
\end{equation}

In order to analyze the $\xi=0$ term of the sum in Theorem \ref{pois} we will need the analytic properties of $D(z;n)$.

\begin{lemma}\label{pstlem6}
\[D(z;n)=\prod_{p}D_p(z;n)\]
Where for each prime $p$, $D_p(z;n)$ is defined by
\[D_{p}(z;n):=\sum_{u=0}^{\infty}\tfrac{1}{p^{u(2z+1)}}\sum_{v=0}^{\infty}\tfrac{Kl_{p^v,p^u}(0,n)}{p^{v(z+1)}}\]
\end{lemma}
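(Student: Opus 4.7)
The lemma asserts that the Dirichlet series $D(z;n)$ admits an Euler product factorization. The heart of the proof is a joint multiplicativity property of the arithmetic function $(l, f) \mapsto Kl_{l, f}(0, n)$ with respect to the pair $(l, f)$, and the plan is to establish this multiplicativity by an application of the Chinese Remainder Theorem and then assemble the Euler product by the usual rearrangement of an absolutely convergent double series.

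The central technical step is to show that for pairs $(l_1, f_1)$ and $(l_2, f_2)$ with $\gcd(l_1 f_1, l_2 f_2) = 1$, the sum $Kl_{l_1 l_2, f_1 f_2}(0, n)$ factors cleanly in terms of the pieces. To do this, I would apply CRT to the indexing variable $a$ in $\mathbb{Z}/(4 l_1 l_2 f_1^2 f_2^2)\mathbb{Z}$. Without loss of generality at least one of $l_i f_i$ is odd; say $l_2 f_2$ is odd, so that $\gcd(4 l_1 f_1^2, \, l_2 f_2^2) = 1$ and the CRT identifies $a$ with a pair $(a_1, a_2) \in \mathbb{Z}/(4 l_1 f_1^2) \times \mathbb{Z}/(l_2 f_2^2)$. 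This forces the $2$-adic information in $a$ to live entirely in the first factor.

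Next I would check that each of the three data defining the sum splits along this decomposition. The divisibility $a^2 \pm 4 p^k \equiv 0 \bmod f_1^2 f_2^2$ splits by CRT into $a_i^2 \equiv \mp 4p^k \bmod f_i^2$ for $i = 1, 2$. The discriminant condition $(a^2 \pm 4 p^k)/f^2 \equiv 0, 1 \bmod 4$ is purely $2$-adic, and since $l_2 f_2$ is odd, it depends only on $a_1$. Finally, the Kronecker symbol in the summand factors as $\left(\frac{m}{l_1 l_2}\right) = \left(\frac{m}{l_1}\right) \left(\frac{m}{l_2}\right)$ by its multiplicativity in the denominator; and for each $i$, a local computation of $m = (a^2 \pm 4 p^k)/f^2$ modulo primes dividing $l_i$ shows that $\left(\frac{m}{l_i}\right)$ depends only on $a_i$ (here one uses that for a prime $q \mid l_i$ the hypothesis $\gcd(l_1 f_1, l_2 f_2) = 1$ guarantees $q$ is coprime to the complementary factor $l_{3-i} f_{3-i}$). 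Putting these three splittings together expresses the sum as a product of sums over $a_1$ and $a_2$ separately, which is the desired multiplicativity.

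Iterating this over all primes writes $Kl_{l, f}(0, n)$ as a product indexed by primes, and substituting into the definition of $D(z; n)$ and interchanging the double sum over $(l, f) = (\prod_p p^{v_p}, \prod_p p^{u_p})$ with the product over $p$ yields the factorization $D(z; n) = \prod_p D_p(z; n)$. The interchange is justified by absolute convergence of the defining double series in a suitable right half-plane, which follows from the elementary bound $|Kl_{l, f}(0, n)| \ll l f^2$ coming from its definition as a finite sum over $a \bmod 4 l f^2$. The main obstacle, and the point requiring the most care, is the prime $2$: the factor of $4$ that is always present in the modulus $4 l f^2$ and the mod $4$ character of the discriminant condition (together with the dependence of the Kronecker symbol at $2$ on residues mod $8$) must be consistently absorbed into the local factor $D_2(z; n)$ during the CRT split so that no spurious global constants appear in the final product.
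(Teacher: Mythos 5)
Your proof is correct and takes essentially the same approach as the paper, whose entire proof is the single phrase ``Chinese remainder theorem.'' You have simply spelled out the details: the joint multiplicativity $Kl_{l_1 l_2, f_1 f_2}(0,n) = Kl_{l_1,f_1}(0,n)\,Kl_{l_2,f_2}(0,n)$ for $\gcd(l_1 f_1, l_2 f_2)=1$ via CRT on $a \bmod 4lf^2$, the observation that the discriminant condition $\bmod\ 4$ and the $2$-part of the Kronecker symbol live entirely in the even factor, and the routine Euler-product rearrangement justified by the trivial bound $|Kl_{l,f}(0,n)| \le 4lf^2$.
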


\begin{proof}Chinese remainder theorem.

\end{proof}

\begin{lemma}\label{pstlem7}Let $p\nmid n$ be an prime. Then,
\[D_{p}(z;n)=\begin{cases}\tfrac{\left(1-\tfrac{1}{p^{z+1}}\right)}{\left(1-\tfrac{1}{p^{2z}}\right)}&\text{if $p$ is odd}\\ 4\tfrac{\left(1-\tfrac{1}{2^{z+1}}\right)}{\left(1-\tfrac{1}{2^{2z}}\right)}&\text{if $p=2$}\end{cases}\]

\end{lemma}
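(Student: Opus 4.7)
The plan is to compute $Kl_{p^v,p^u}(0,n)$ for every pair $(u,v)$ and then resum the double Dirichlet series defining $D_p(z;n)$. The odd prime case and the $p=2$ case have to be handled separately because of the different behaviours of the Kronecker symbol at $p$ and of the mod-$4$ constraint in the definition of $Kl_{l,f}$.

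For odd $p$, first use $\gcd(4,p)=1$ and CRT to split the summation variable $a\bmod 4p^{2u+v}$ into $(a_4,a_p)$. Because $p^{2u}\equiv 1\pmod 4$ and $4n\equiv 0\pmod 4$, the congruence $(a^2-4n)/p^{2u}\equiv 0,1\pmod 4$ collapses to $a^2\equiv 0,1\pmod 4$, which is automatic; hence the $a_4$ sum factors off as a constant and the problem reduces to evaluating the $p$-adic sum
\[
S(u,v)=\sum_{\substack{a_p \bmod p^{2u+v}\\ a_p^2\equiv 4n \bmod p^{2u}}}\left(\tfrac{(a_p^2-4n)/p^{2u}}{p}\right)^v.
\]
For $u=0$, I would group $a_p$ by its residue $\bmod p$ and invoke the classical identity $\sum_{a\bmod p}\bigl(\tfrac{a^2-4n}{p}\bigr)=-1$ (valid since $p\nmid n$), getting closed forms that distinguish $v$ odd from $v$ even. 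For $u\ge 1$, Hensel's lemma produces either two solutions $\pm r$ of $a_p^2\equiv 4n\bmod p^{2u}$ or none, according to whether $\bigl(\tfrac{n}{p}\bigr)=+1$ or $-1$; parameterising $a_p=\pm r+p^{2u}t$ turns the inner Kronecker symbol into $\bigl(\tfrac{c\pm 2rt}{p}\bigr)^v$, a linear character sum in $t$ whose value is $0$ for odd $v$ and $p-1$ for even $v\ge 2$.

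Assembling these pieces one gets two geometric series in $p^{-(z+1)}$ and $p^{-(2z+1)}$. Using the algebraic identity $p\cdot p^{-(z+1)}=p^{-z}$, the $\bigl(\tfrac{n}{p}\bigr)$-dependent parts of the $u=0$ and $u\ge 1$ contributions should cancel against each other, leaving the claimed Euler factor independent of the residue of $n\bmod p$.

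For $p=2$ the $4$-part of the modulus no longer splits off, and the Kronecker symbol $\bigl(\tfrac{\cdot}{2^v}\bigr)$ depends on its argument $\bmod 8$. I would treat $u=0$ essentially as in the odd case (the $\bmod 4$ condition is still automatic there), and for $u\ge 1$ exploit the fact that $a^2\equiv 4n\pmod{4}$ forces $a$ even, so that writing $a=2a'$ reduces $(a^2-4n)/4$ to $a'^2-n$ and enables iteration. The main obstacle I expect is precisely this $p=2$ bookkeeping: the clean Hensel picture for odd $p$ is replaced by a finite case analysis on $n\bmod 8$, and one must verify that all of these subcases resum to the same closed form; the $\bmod 8$ behaviour of the $2$-adic Kronecker symbol has to be threaded through the computation carefully so that the extra factor of $4$ really survives into the final answer.
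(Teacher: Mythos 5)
Your approach is essentially the paper's: compute $Kl_{p^v,p^u}(0,n)$ case by case in $(u,v)$ and then resum the double geometric series. The paper's case breakdown ($u=v=0$; $u=0, v>0$ split by parity of $v$ using the character sum $\sum_{a\bmod p}\bigl(\tfrac{a^2-4n}{p}\bigr)=-1$; $u>0, v=0$ counting solutions of $a^2\equiv 4n\bmod p^{2u}$; $u,v>0$ via Hensel lifting and a linear character sum in the lift parameter) is exactly what you propose, and your identification of the $(p-1)$-vs.-$0$ dichotomy for even/odd $v$ and of the cancellation of the $\bigl(\tfrac{n}{p}\bigr)$-dependent pieces in the resummation matches the paper's computation. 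For $p=2$ the paper simply says the analogous computation, with a case analysis on $n\bmod 8$, is left to the reader; your outline (the $a$ even reduction, iteration, and the mod-$8$ dependence of $\bigl(\tfrac{\cdot}{2}\bigr)$) is more than what the paper provides.

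The one place where you should be more careful is the factor of $4$. You correctly observe that for odd $p$ the $a_4$-sum factors off and contributes a constant $4$, but if you keep that $4$ in \emph{each} odd Euler factor $D_p$ then $\prod_p D_p$ diverges and the stated local formula is off by a factor of $4$. What the paper (implicitly, in passing from Lemma~\ref{pstlem6} to Lemma~\ref{pstlem7}) does is assign the entire mod-$4$ part of the modulus to the $p=2$ Euler factor: for odd $p$ the local sum in $D_p$ runs over $a\bmod p^{v+2u}$ with no mod-$4$ condition (your $S(u,v)$, not $4\,S(u,v)$), while for $p=2$ the local modulus is $2^{v+2u+2}$, whence the extra $4$ in $D_2$. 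You should state explicitly that this is how the Euler factorization $D=\prod_p D_p$ is normalized, so that the constant you peel off at the odd primes is not double-counted; once that is fixed, the rest of your argument goes through as in the paper.
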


\begin{proof}Let us first assume that $p\equiv1\bmod2$. In order to compute $D_p(z;n)$ we need to compute $Kl_{p^v,p^u}(0,n)$ for various values of $u$ and $v$.
\begin{itemize}
\item $v=u=0$.

In this case the $Kl_{1,1}(0,n)$ is obviously $1$.

\item $v>0,\,u=0$. 
\begin{align*}
Kl_{p^v,1}(0,n)&=\sum_{a\bmod p^v}\left(\tfrac{a^2-4n}{p^v}\right)\\
&=\sum_{a_0\bmod p}\left(\tfrac{a_0^2-4n}{p^v}\right)\sum_{a_1\bmod p^{v-1}}1\\
&=p^{v-1}\sum_{a_0\bmod p}\left(\tfrac{a_0^2-4n}{p^v}\right)\tag{$i$}\label{pstlem7i}
\end{align*}
This last sum depends on the parity of $v$.

\begin{itemize}
\item $v\equiv 0\bmod 2$. 

In this case

\begin{align*}
\eqref{pstlem7i}&=p^{v-1}\sum_{\substack{a_0\bmod p\\ a_0^2\neq 4n}}1\\
&=p^v-p^{v-1}\left(1+\left(\tfrac{n}{p}\right)\right)\tag{$ii$}\label{pstlem7ii}
\end{align*}

\item $v\equiv 1 \bmod 2$.

In this case
\begin{equation*}
\eqref{pstlem7i}=-p^{v-1}\tag{$iii$}\label{pstlem7iii}
\end{equation*}
Where we used Lemma 2 of Appendix A of \cite{L1}.
\end{itemize}

\item $v=0,\, u>0$. 

Since $p\neq2$, 
\begin{align*}
Kl_{1,p^u}(0,n)&=\sum_{\substack{a\bmod p^{2u}\\ a^2\equiv 4n \bmod p^{2u}}}1\\
&=1+\left(\tfrac{n}{p}\right)\tag{$iv$}\label{pstlem7iv}
\end{align*}

\item $v,u>0$.
First of all the sum is clearly $0$ unless $n$ is a square $\bmod p$. If $n$ is a square $\bmod p$, since $p\neq2$, then there are two exactly square-roots of $4n$ modulo $p^{2u}$. Let us denote them by $n_1,n_2$. Then,
\begin{align*}
Kl_{p^v,p^u}(0,n)&=\sum_{\substack{a\bmod p^{v+2u}\\ a^2\equiv 4n\bmod p^{2u}}}\left(\tfrac{(a^2-4n)/p^{2u}}{p^v}\right)\\
&=p^{v-1}\sum_{\substack{a_0\bmod p^{1+2u}\\ a_0^2\equiv 4n\bmod p^{2u}}}\left(\tfrac{(a_0^2-4n)/p^{2u}}{p^v}\right)\\
&=p^{v-1}\sum_{\substack{a_0\bmod p^{1+2u}\\ a_0\equiv n_j\bmod p^{2u}}}\left(\tfrac{(a_0^2-4n)/p^{2u}}{p^v}\right)\\
&=p^{v-1}\sum_{\substack{a_2\bmod p\\ j=1,2}}\left(\tfrac{a_2n_j}{p^v}\right)\\
&=p^{v-1}\begin{cases}0&\text{if $v\equiv 1\bmod 2$}\\ (p-1)\left(1+\left(\tfrac{n}{p}\right)\right) &\text{if $v\equiv0\bmod 2$}\end{cases}\tag{$v$}\label{pstlem7v}
\end{align*}

\end{itemize}

We can now compute $D_p(z;n)$. 
\begin{align*}
D_p(z;n)&=\sum_{u=0}^{\infty}\tfrac{1}{p^{u(2z+1)}}\sum_{v=0}^{\infty}\tfrac{Kl_{p^{v},p^u}(0,n)}{p^{v(z+1)}}\\
&=1+\sum_{v=1}^{\infty}\tfrac{Kl_{p^v,1}(0,n)}{p^{v(z+1)}}+\sum_{u=1}^{\infty}\tfrac{Kl_{1,p^u}(0,n)}{p^{u(2z+1)}}+\sum_{u=1}^{\infty}\tfrac{1}{p^{u(2z+1)}}\sum_{v=1}^{\infty}\tfrac{Kl_{p^{v},p^u}(0,n)}{p^{v(z+1)}}
\end{align*}
Using \eqref{pstlem7i} to \eqref{pstlem7v}, we then get:
\begin{align*}
D_p(z;n)&=1+\sum_{v=1}^{\infty}\tfrac{Kl_{p^v,1}(0,n)}{p^{v(z+1)}}+\sum_{u=1}^{\infty}\tfrac{Kl_{1,p^u}(0,n)}{p^{u(2z+1)}}+\sum_{u=1}^{\infty}\tfrac{1}{p^{u(2z+1)}}\sum_{v=1}^{\infty}\tfrac{Kl_{p^{v},p^u}(0,n)}{p^{v(z+1)}}\\
&=1+\sum_{v=1}^{\infty}\tfrac{Kl_{p^{2v},1}(0,n)}{p^{2v(z+1)}}+\sum_{v=0}^{\infty}\tfrac{Kl_{p^{2v+1},1}(0,n)}{p^{(2v+1)(z+1)}}+\sum_{u=1}^{\infty}\tfrac{Kl_{1,p^u}(0,n)}{p^{u(2z+1)}}+\sum_{u=1}^{\infty}\tfrac{1}{p^{u(2z+1)}}\sum_{v=1}^{\infty}\tfrac{Kl_{p^{2v},p^u}(0,n)}{p^{2v(z+1)}}\\
&=1+\left(1-\tfrac{1}{p}\left(1+\left(\tfrac{n}{p}\right)\right)\right)\sum_{v=1}^{\infty}\tfrac{p^{2v}}{p^{2v(z+1)}}-\tfrac{1}{p}\sum_{v=0}^{\infty}\tfrac{p^{2v+1}}{p^{(2v+1)(z+1)}}+\left(1+\left(\tfrac{n}{p}\right)\right)\sum_{u=1}^{\infty}\tfrac{1}{p^{u(2z+1)}}\\
&\hspace{0.2in}+\left(1-\tfrac{1}{p}\right)\left(1+\left(\tfrac{n}{p}\right)\right)\sum_{u=1}^{\infty}\tfrac{1}{p^{u(2z+1)}}\sum_{v=1}^{\infty}\tfrac{p^{2v}}{p^{2v(z+1)}}\\
&=1+\left(1-\tfrac{1}{p}\left(1+\left(\tfrac{n}{p}\right)\right)\right)\sum_{v=1}^{\infty}\tfrac{1}{p^{2vz}}-\tfrac{1}{p}\sum_{v=0}^{\infty}\tfrac{1}{p^{(2v+1)z}}+\left(1+\left(\tfrac{n}{p}\right)\right)\sum_{u=1}^{\infty}\tfrac{1}{p^{u(2z+1)}}\\
&\hspace{0.2in}+\left(1-\tfrac{1}{p}\right)\left(1+\left(\tfrac{n}{p}\right)\right)\sum_{u=1}^{\infty}\tfrac{1}{p^{u(2z+1)}}\sum_{v=1}^{\infty}\tfrac{1}{p^{2vz}}\\
&=\tfrac{1-\frac{1}{p^{z+1}}}{1-\frac{1}{p^{2z}}}+\left(1+\left(\tfrac{n}{p}\right)\right)\sum_{u=1}^{\infty}\tfrac{1}{p^{u(2z+1)}}\sum_{v=0}^{\infty}\tfrac{1}{p^{2vz}}-\tfrac1p\left(1+\left(\tfrac{n}{p}\right)\right)\sum_{u=0}^{\infty}\tfrac{1}{p^{u(2z+1)}}\sum_{v=1}^{\infty}\tfrac{1}{p^{2vz}}\\
&=\tfrac{1-\frac{1}{p^{z+1}}}{1-\frac{1}{p^{2z}}}
\end{align*}

This finishes the proof of the lemma when $p\equiv1\bmod2$. The computation for $p=2$ follows the same argument using the properties of the Kronecker symbol $\left(\tfrac{\cdot}{2}\right)$. The only difference is that we need to do a case by case calculation depending on the congruence class of $n\bmod 8 $. We leave the details to the reader.
\end{proof}

\begin{lemma}\label{pstlem8}Let $p\mid n$ be a prime, and let $v_p(n)$ denote the $p$-adic valuation of $n$. Then,
\[D_p(z;n)=\begin{cases}\tfrac{\left(1-\frac{1}{p^{z(v_p(n)+1)}}\right)\left(1-\frac{1}{p^{z+1}}\right)}{\left(1-\frac{1}{p^{2z}}\right)\left(1-\tfrac{1}{p^{z}}\right)}&\text{if $p$ is odd}\\ 4\tfrac{\left(1-\frac{1}{2^{z(v_p(n)+1)}}\right)\left(1-\frac{1}{2^{z+1}}\right)}{\left(1-\frac{1}{2^{2z}}\right)\left(1-\tfrac{1}{2^{z}}\right)}&\text{if $p=2$}\end{cases}\]

\end{lemma}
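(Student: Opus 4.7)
The plan is to extend the case analysis of Lemma \ref{pstlem7} to incorporate the $p$-adic valuation of $n$. Writing $e := v_p(n)$, for each pair $(u,v)$ of non-negative integers I would recompute $Kl_{p^v,p^u}(0,n)$, distinguishing whether the relevant exponents of $p$ exceed $e$ (or $e+2$ when $p=2$). The four cases to handle are: $u=v=0$, where the Kloosterman sum is $1$; $u=0$, $v>0$, where one evaluates $p^{v-1}\sum_{a_0\bmod p}\left(\tfrac{a_0^2-4n}{p^v}\right)$ and this now depends on whether $v\leq e$ (so that $4n\equiv 0 \bmod p^v$ and the symbol reduces to $\left(\tfrac{a_0^2}{p^v}\right)$) or $v>e$ (reducing to the odd/even split of Lemma \ref{pstlem7}); $v=0$, $u>0$, which counts $a\bmod p^{2u}$ with $a^2\equiv 4n\bmod p^{2u}$ and splits into $2u\leq e$ (giving $p^u$ solutions from $a\equiv 0\bmod p^u$) versus $2u>e$, where Hensel's lemma applies after dividing out by $p^e$ and requires $e$ to be even for any solution to exist; and finally the mixed case $u,v>0$, which is obtained by combining the previous two via Hensel lifting of square roots.

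Once the Kloosterman sums are tabulated, I would substitute them into the definition
\[
D_p(z;n)=\sum_{u\geq 0}\tfrac{1}{p^{u(2z+1)}}\sum_{v\geq 0}\tfrac{Kl_{p^v,p^u}(0,n)}{p^{v(z+1)}}
\]
and group the terms according to the relative sizes of $v$, $2u$ and $e$. The geometric sub-series arising from the pairs with $\max(v,2u)>e$ are essentially those appearing in Lemma \ref{pstlem7} and recombine into the factor $\tfrac{1-p^{-(z+1)}}{1-p^{-2z}}$. The pairs with $\max(v,2u)\leq e$ contribute a finite sum whose value is a truncated geometric series in $p^{-z}$ with $e+1$ terms, producing precisely the extra factor
\[
\tfrac{1-p^{-z(e+1)}}{1-p^{-z}}
\]
that is present in the claimed closed form.

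The main obstacle will be the book-keeping in the case analysis and verifying that the finite-range and tail contributions combine cleanly, with the correct cancellations between the four regimes. The case $p=2$ needs separate handling because the Kronecker symbol $\left(\tfrac{\cdot}{2}\right)$ is determined by congruences modulo $8$ and because of the additional discriminant condition $(a^2\pm 4n)/f^2\equiv 0,1\bmod 4$; as in the last lemma this only changes the overall constant into the factor $4$, while the dependence on $v_2(n)$ emerges from an analogous, slightly more elaborate, split according to the $2$-adic valuation.
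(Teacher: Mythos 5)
Your overall strategy is the same as the paper's: evaluate $Kl_{p^v,p^u}(0,n)$ by cases according to how $u,v$ compare to $e=v_p(n)$, substitute into the double Dirichlet series, and sum the resulting geometric series. That part is sound. However, the specific structural claim in your outline — that the pairs with $\max(v,2u)\leq e$ contribute the truncated factor $\tfrac{1-p^{-z(e+1)}}{1-p^{-z}}$ while the pairs with $\max(v,2u)>e$ contribute $\tfrac{1-p^{-(z+1)}}{1-p^{-2z}}$ — does not hold as an additive decomposition of the double sum, and you should be cautious about assuming it will fall out cleanly. A quick check with $e=1$: the pairs with $\max(v,2u)\leq 1$ are $(u,v)=(0,0)$ and $(0,1)$, contributing $1+(1-\tfrac1p)p^{-z}$, which is not $\tfrac{1-p^{-2z}}{1-p^{-z}}=1+p^{-z}$. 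The product structure of the answer is real, but it emerges from factoring the full double sum (roughly a truncated $u$-sum times an infinite $v$-sum), not from partitioning the index set.

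Two further points you will need to address in a full write-up. First, the paper finds it essential to split at the top level on the parity of $r=v_p(n)$ before the $(u,v)$ case analysis, because when $r$ is odd there are no solutions $a^2\equiv 4n\bmod p^{2u}$ once $2u>r$, whereas when $r=2r_0$ is even the regime $u>r_0$ survives (via Hensel) and brings in terms involving $1+\left(\tfrac{n_0}{p}\right)$ with $n=p^{2r_0}n_0$; these terms must be shown to cancel, and that cancellation is invisible in your outline. Second, you correctly flag $p=2$ as needing separate treatment, but note that the factor $4$ is not just ``the same overall constant as before'' — you should verify it survives the $v_2(n)$-dependent truncation of the $u$-sum. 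None of this is a wrong turn, but the book-keeping you anticipate is genuinely where the work lies, and the naive partition-by-$\max(v,2u)$ picture will not survive contact with the computation.
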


\begin{proof}As in the proof of Lemma \ref{pstlem7} we first assume that $p\equiv1\bmod 2$. The computation depends on the parity of the $p$-adic valuation of $n$. Let $r=v_p(n)$ throughout the proof. As in the proof of Lemma \ref{pstlem7} we need to compute the values of $Kl_{p^v,p^u}(0,n)$ first.

\begin{itemize}
\item $r\equiv 1\bmod 2$.
We divide the computation into cases depending on the values of $v$ and $u$.
\begin{itemize}

\item $v=u=0$.

In this case $Kl_{p^v,p^u}(0,n)$ is obviously $1$.

\item $v>0,\,u=0$.

Since $p\mid n$,

\begin{align*}
Kl_{p^v,1}(0,n)&=\sum_{a\bmod p^v}\left(\tfrac{a^2-4n}{p^v}\right)\\
&=\sum_{a\bmod p^v}\left(\tfrac{a^2}{p^v}\right)\\
&=p^{v}-p^{v-1}\tag{$i$}\label{pstlem8i}
\end{align*}

\item $2u>r$.

In this case we use the assumption that $r\equiv1\bmod 2$ and that $p\neq2$. Note that in this case the sum runs over $a\bmod p^{2u}$ such that $a^2\equiv 4n\bmod p^{2u}$. If $a\bmod p^{2u}$ is such that $a^2\equiv 4n \bmod p^{2u}$ then we need to have $a=p^{\frac{r+1}{2}}a_0$ for some $a_0\bmod p^{2u-\frac{r+1}{2}}$. But in this case $a^2=p^{r+1}a_0^2\equiv 0\bmod p^{r+1}$, therefore we cannot have $a^2\equiv 4n\bmod p^{2u}$. Hence in this case the sum is $0$.

\item $r>2u>0,\,v>0$.

In this case $n\equiv 0\bmod p^{2u}$ and hence in order to have $a^2\equiv a\bmod p^{2u}$ we necessarily have $a\equiv 0\bmod p^u$. Then the sum is,
\begin{align*}
Kl_{p^{v},p^{u}}(0,n)&=\sum_{\substack{a\bmod p^{v+2u}\\ a^2\equiv 4n\bmod p^{2u}}}\left(\tfrac{(a^2-4n)/p^{2u}}{p^v}\right)\\
&=\sum_{a_0\bmod p^{v+u}}\left(\tfrac{a_0^2}{p^v}\right)\tag{$\star$}\label{pstlem8star}\\
&=p^{u+v}-p^{u+v-1}\tag{$ii$}\label{pstlem8ii}
\end{align*}
Where in passing to \eqref{pstlem8star} we used the assumption that $r\equiv1\bmod 2$ so that $a/p^{2u}\equiv 0\bmod p$.

\item $r>2u>0,\,v=0$.

In this case,

\begin{align*}
Kl_{1,p^{u}}(0,n)&=\sum_{\substack{a\bmod p^{2u}\\ a^2\equiv 4n\bmod p^{2u}}}1\\
&=p^u\tag{$iii$}\label{pstlem8iii}
\end{align*}

\end{itemize}

Using \eqref{pstlem8i}, \eqref{pstlem8ii}, \eqref{pstlem8iii} and the argument above we see that,
\begin{align*}
D_p(z;n)&=1+\sum_{v=1}^{\infty}\tfrac{Kl_{p^v,1}(0,n)}{p^{v(z+1)}}+\sum_{u=1}^{\infty}\tfrac{Kl_{1,p^u}(0,n)}{p^{u(2z+1)}}+\sum_{u=1}^{\infty}\tfrac{1}{p^{u(2z+1)}}\sum_{v=1}^{\infty}\tfrac{Kl_{p^{v},p^u}(0,n)}{p^{v(z+1)}}\\
&=1+\left(1-\tfrac{1}{p}\right)\sum_{v=1}^{\infty}\tfrac{p^{v}}{p^{v(z+1)}}+\sum_{u=1}^{\frac{r-1}{2}}\tfrac{p^u}{p^{u(2z+1)}}+\left(1-\tfrac1p\right)\sum_{u=1}^{\frac{r-1}{2}}\tfrac{p^u}{p^{u(2z+1)}}\sum_{v=1}^{\infty}\tfrac{p^v}{p^{v(z+1)}}\\
&=\left(1-\tfrac1p\right)\sum_{u=0}^{\frac{r-1}{2}}\tfrac{1}{p^{2uz}}\sum_{v=1}^{\infty}\tfrac{1}{p^{vz}}+\sum_{u=0}^{\frac{r-1}{2}}\tfrac{1}{p^{2uz}}\\
&=\tfrac{1-\frac{1}{p^{z+1}}}{1-\frac{1}{p^z}}\sum_{u=0}^{\frac{r-1}{2}}\tfrac{1}{p^{2uz}}\\
&=\tfrac{\left(1-\frac{1}{p^{z+1}}\right)\left(1-\frac{1}{p^{z(r+1)}}\right)}{\left(1-\frac{1}{p^z}\right)\left(1-\frac{1}{p^{2z}}\right)}
\end{align*}
Which finishes the proof in the case of $r\equiv 1\bmod 2$.

\item $r\equiv 0\bmod 2$. 

Let $r=2r_0$. We proceed as before and first compute $Kl_{p^{v},p^u}(0,n)$. The computation once again depends on the values of $v$ and $u$.

\begin{itemize}

\item $v=u=0$.

Once again $Kl_{1,1}(0,n)=1$.

\item $v>0,\,u=0$.

In this case the result of \eqref{pstlem8i} is still valid.

\item $r_0\geq u>0,\, v=0$.

In this case \eqref{pstlem8iii} is still valid.

\item $u> r_0,\, v=0$.

In this case we need to compute,

\begin{equation*}
Kl_{1,p^u}(0,n)=\sum_{\substack{a\bmod p^{2u}\\ a^2\equiv4n\bmod p^{2u}}}1
\end{equation*}
Let $n=p^{2r_0}n_0$. Then the sum vanishes unless $\left(\tfrac{n_0}{p}\right)=1$. If this is the case then we have,
\begin{align*}
Kl_{1,p^u}(0,n)&=\sum_{\substack{a\bmod p^{2u}\\ a^2\equiv4n\bmod p^{2u}}}1\\
&=\sum_{\substack{a_0\bmod p^{2u-r_0}\\ a_0^2\equiv4n_0\bmod p^{2u-2r_0}}}1\\
&=2p^{r_0}
\end{align*}
Therefore in this case,
\[Kl_{1,p^u}(0,n)=\left(1+\left(\tfrac{n_0}{p}\right)\right)p^{r_0}\tag{$iv$}\label{pstlem8iv}\]

\item $r_0>u>0,\,v>0$.

In this case \eqref{pstlem8ii} is still valid.

\item $u=r_0,\, v>0$.

In this case we have,
\begin{align*}
Kl_{p^{v},p^{u}}(0,n)&=\sum_{\substack{a\bmod p^{v+2u}\\ a^2\equiv 4n\bmod p^{2u}}}\left(\tfrac{(a^2-4n)/p^{2u}}{p^v}\right)\\
&=\sum_{a_0\bmod p^{v+u}}\left(\tfrac{a_0^2-4n_0}{p^v}\right)\\
&=\begin{cases}-p^{v+u-1}&\text{if $v\equiv 1\bmod 2$}\\ p^{v+u}-p^{v+u-1}\left(1+\left(\frac{n_0}{p}\right)\right)&\text{if $v\equiv 0\bmod 2$}\end{cases}
\end{align*}
Where we used Lemma 2 of Appendix A of \cite{L1} in the first line.
\item $u>r_0, \, v>0$. 

In this case we need to compute,

\begin{equation*}
Kl_{p^v,p^u}(0,n)=\sum_{\substack{a\bmod p^{v+2u}\\ a^2\equiv4n\bmod p^{2u}}}\left(\tfrac{(a^2-4n)/p^{2u}}{p^v}\right)
\end{equation*}
Let $n=p^{2r_0}n_0$, where $v_p(n_0)=0$. Then, since $p\neq2$, in order to have $a^2\equiv 4n\bmod p^{2u}$ we need to have $\left(\tfrac{n_0}{p}\right)=1$, i.e. $n_0$ is a square modulo $p$. This, by Hensel's lemma, implies that $n_0$ is a square modulo $p^{2u-2r_0+1}$. Let us assume that this is the case and denote the square-roots (which there are exactly two since $p\neq2$) of $n_0$ modulo $p^{2u-2r_0+1}$ by $u_1,\,u_2$, i.e. $u_j^2\equiv n_0\bmod p^{2u-2r_0+1}$. Then $Kl_{p^v,p^u}(0,n)$ can be written as,
\begin{align*}
Kl_{p^v,p^u}(0,n)&=\sum_{\substack{a\bmod p^{v+2u}\\ a^2\equiv4n\bmod p^{2u}}}\left(\tfrac{(a^2-4n)/p^{2u}}{p^v}\right)\\
&=p^{v-1}\sum_{\substack{a_0\bmod p^{1+2u}\\ a_0^2\equiv4n\bmod p^{2u}}}\left(\tfrac{(a_0^2-4n)/p^{2u}}{p^v}\right)\\
&=p^{v-1}\sum_{\substack{a_1\bmod p^{1+2u-r_0}\\ a_1^2\equiv4n_0\bmod p^{2(u-r_0)}}}\left(\tfrac{(a_1^2-4n_0)/p^{2(u-r_0)}}{p^v}\right)\\
&=p^{v+r_0-1}\sum_{j=1,2}\sum_{\substack{a_3\bmod p}}\left(\tfrac{a_3u_j}{p^v}\right)\\
&=p^{v+r_0-1}2\begin{cases}0&\text{if $v\equiv 1\bmod 2$}\\ p-1&\text{if $v\equiv 0\bmod 2$}\end{cases}
\end{align*}
We therefore get,
\[Kl_{p^v,p^u}(0,n)=p^{v+r_0}\left(1-\tfrac{1}{p}\right)\left(1+\left(\tfrac{n_0}{p}\right)\right)\begin{cases}0&\text{if $v\equiv 1\bmod 2$}\\ 1&\text{if $v\equiv 0\bmod 2$}\end{cases}\tag{$v$}\label{pstlem8v}\]

\end{itemize}

We can now compute $D_p(z;n)$. By using \eqref{pstlem8i} to \eqref{pstlem8v} we get,

\begin{align*}
D_p(z;n)&=1+\sum_{v=1}^{\infty}\tfrac{Kl_{p^v,1}(0,n)}{p^{v(z+1)}}+\sum_{u=1}^{r_0}\tfrac{Kl_{1,p^u}(0,n)}{p^{u(2z+1)}}+\sum_{u=r_0+1}^{\infty}\tfrac{Kl_{1,p^u}(0,n)}{p^{u(2z+1)}}\\
&\hspace{0.5in}+\sum_{u=1}^{r_0-1}\tfrac{1}{p^{u(2z+1)}}\sum_{v=1}^{\infty}\tfrac{Kl_{p^{v},p^u}(0,n)}{p^{v(z+1)}}+\tfrac{1}{p^{r_0(2z+1)}}\sum_{v=1}^{\infty}\tfrac{Kl_{p^v,p^{r_0}}(0,n)}{p^{v(z+1)}}\\
&\hspace{1in}+\sum_{u=r_0+1}^{\infty}\tfrac{1}{p^{u(2z+1)}}\sum_{v=1}^{\infty}\tfrac{Kl_{p^{v},p^u}(0,n)}{p^{v(z+1)}}\\
&=1+\left(1-\tfrac1p\right)\sum_{v=1}^{\infty}\tfrac{1}{p^{vz}}+\sum_{u=1}^{r_0}\tfrac{1}{p^{2uz}}+p^{r_0}\left(1+\left(\tfrac{n_0}{p}\right)\right)\sum_{u=r_0+1}^{\infty}\tfrac{1}{p^{u(2z+1)}}\\
&+\left(1-\tfrac1p\right)\sum_{u=1}^{r_0-1}\tfrac{1}{p^{2uz}}\sum_{v=1}^{\infty}\tfrac{1}{p^{vz}}+\tfrac{p^{r_0}}{p^{r_0(2z+1)}}\left(1-\tfrac1p\left(1+\left(\tfrac{n_0}{p}\right)\right)\right)\sum_{v=1}^{\infty}\tfrac{1}{p^{2vz}}\\
&-\tfrac{p^{r_0-1}}{p^{r_0(2z+1)}}\sum_{v=0}^{\infty}\tfrac{1}{p^{(2v+1)z}}+p^{r_0}\left(1-\tfrac{1}{p}\right)\left(1+\left(\tfrac{n_0}{p}\right)\right)\sum_{u=r_0+1}^{\infty}\tfrac{1}{p^{u(2z+1)}}\sum_{v=1}^{\infty}\tfrac{1}{p^{2vz}}
\end{align*}

\begin{align*}
&=1+\left(1-\tfrac1p\right)\sum_{v=1}^{\infty}\tfrac{1}{p^{vz}}+\sum_{u=1}^{r_0}\tfrac{1}{p^{2uz}}+\left(1-\tfrac1p\right)\sum_{u=1}^{r_0-1}\tfrac{1}{p^{2uz}}\sum_{v=1}^{\infty}\tfrac{1}{p^{vz}}\\
&\hspace{1in}+\tfrac{p^{r_0}}{p^{r_0(2z+1)}}\sum_{v=1}^{\infty}\tfrac{1}{p^{2vz}}-\tfrac{p^{r_0-1}}{p^{r_0(2z+1)}}\sum_{v=0}^{\infty}\tfrac{1}{p^{(2v+1)z}}\\
&=1+\left(1-\tfrac1p\right)\sum_{u=0}^{r_0-1}\tfrac{1}{p^{2uz}}\sum_{v=1}^{\infty}\tfrac{1}{p^{vz}}+\sum_{u=1}^{r_0}\tfrac{1}{p^{2uz}}+\tfrac{\left(\frac{1}{p^z}-\frac{1}{p}\right)}{p^{(2r_0+1)z}\left(1-\frac{1}{p^{2z}}\right)}\\
&=\tfrac{\left(1-\frac{1}{p^{2(r_0+1)z}}\right)}{\left(1-\frac{1}{p^{2z}}\right)}+\tfrac{\left(1-\frac1p\right)\left(1-\frac{1}{p^{2r_0z}}\right)}{p^z\left(1-\frac{1}{p^{2z}}\right)\left(1-\frac{1}{p^z}\right)}+\tfrac{\left(\frac{1}{p^z}-\frac{1}{p}\right)}{p^{(2r_0+1)z}\left(1-\frac{1}{p^{2z}}\right)}\\
&=1+\sum_{u=0}^{r_0-1}\tfrac{1}{p^{2uz}}\sum_{v=1}^{\infty}\tfrac{1}{p^{vz}}+\sum_{u=1}^{r_0}\tfrac{1}{p^{2uz}}-\tfrac{1}{p}\sum_{u=0}^{r_0-1}\tfrac{1}{p^{2uz}}\sum_{v=1}^{\infty}\tfrac{1}{p^{vz}}-\tfrac{1}{p^{(2r_0+1)z+1}\left(1-\frac{1}{p^{2z}}\right)}\\
&=1+\tfrac{\left(1-\frac{1}{p^{2r_0z}}\right)}{p^{2z}\left(1-\frac{1}{p^{z}}\right)\left(1-\frac{1}{p^{2z}}\right)}-\tfrac{\left(1-\frac{1}{p^{(2r_0+1)z}}\right)}{p^{z+1}\left(1-\frac{1}{p^z}\right)\left(1-\frac{1}{p^{2z}}\right)}\\
&=\tfrac{\left(1-\frac{1}{p^{(2r_0+1)z}}\right)\left(1-\frac{1}{p^{z+1}}\right)}{\left(1-\frac{1}{p^z}\right)\left(1-\frac{1}{p^{2z}}\right)}
\end{align*}
This finishes the proof of the lemma for the case $p\equiv1\bmod 2$. The calculations for $p=2$ follow the same lines and as in the proof of Lemma \ref{pstlem7} we leave this case to the reader.
\end{itemize}

\end{proof}

\begin{cor}\label{pstcor9}Let $n\in\mathbb{Z}$. Then,
\[D(z;n)=4\tfrac{\zeta(2z)}{\zeta(z+1)}\prod_{p\mid n}\tfrac{\left(1-p^{-z(v_p(n)+1)}\right)}{\left(1-p^{-z}\right)}\]

\end{cor}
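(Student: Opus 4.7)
The plan is to simply assemble the local factors computed in Lemmas \ref{pstlem7} and \ref{pstlem8} via the Euler product of Lemma \ref{pstlem6}, and recognize the result as the claimed ratio of zeta values times a finite correction at the primes dividing $n$.

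By Lemma \ref{pstlem6}, $D(z;n)=\prod_p D_p(z;n)$. I would split this product into three pieces: the factor at $p=2$, the odd primes $p\nmid n$, and the odd primes $p\mid n$. For an odd prime $p\nmid n$, Lemma \ref{pstlem7} gives $D_p(z;n)=\frac{1-p^{-(z+1)}}{1-p^{-2z}}$, which is precisely the local Euler factor of $\zeta(2z)/\zeta(z+1)$. For an odd prime $p\mid n$, Lemma \ref{pstlem8} gives
\[
D_p(z;n)=\frac{1-p^{-(z+1)}}{1-p^{-2z}}\cdot\frac{1-p^{-z(v_p(n)+1)}}{1-p^{-z}},
\]
i.e.\ the same Euler factor times exactly the extra ratio appearing in the product over $p\mid n$ in the statement. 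At $p=2$, both Lemmas \ref{pstlem7} and \ref{pstlem8} include an extra multiplicative constant of $4$, and this global factor of $4$ becomes the $4$ in the statement (note that the statement writes the product over $p\mid n$ without distinguishing $2\mid n$ or not, because the formula for the extra factor at $p=2$ in Lemma \ref{pstlem8} has the same shape as for odd $p$).

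Multiplying everything together and using the Euler product identity
\[
\frac{\zeta(2z)}{\zeta(z+1)}=\prod_p\frac{1-p^{-(z+1)}}{1-p^{-2z}},
\]
valid in the region of absolute convergence, yields
\[
D(z;n)=4\cdot\frac{\zeta(2z)}{\zeta(z+1)}\cdot\prod_{p\mid n}\frac{1-p^{-z(v_p(n)+1)}}{1-p^{-z}},
\]
which is the claimed formula. I would note that the identity holds initially in a half-plane of absolute convergence for $D(z;n)$ and extends by analytic continuation.

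There is essentially no obstacle: all of the arithmetic work is already packaged into Lemmas \ref{pstlem7} and \ref{pstlem8}, and the only task is bookkeeping to see that the local Euler factors at primes $p\nmid n$ assemble into $\zeta(2z)/\zeta(z+1)$ while the primes $p\mid n$ contribute the displayed correction, with the constant $4$ coming entirely from the anomalous factor at $p=2$.
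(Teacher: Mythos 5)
Your proof is correct and takes exactly the same approach as the paper, which simply states that the corollary follows from Lemmas \ref{pstlem7} and \ref{pstlem8}; you have just spelled out the bookkeeping (the Euler product from Lemma \ref{pstlem6}, the splitting by $p\nmid n$ versus $p\mid n$, and the observation that the anomalous factor of $4$ at $p=2$ appears exactly once in either case).
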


\begin{proof}Follows from lemmas \ref{pstlem7} and \ref{pstlem8}.

\end{proof}

\end{section}

\begin{section}{Isolation of the Contribution of Special Representations}

In this section we will isolate the special representations as promised in the introduction and finish the proof of Theorem \ref{mainthm}. We will identify the contribution of the trivial representation and the residues of Eisenstein series (denoted by $\xi_0$ in \S\ref{obstacles}) to the trace formula in the dominant term, $\eqref{4eq13}_{\xi=0}$, of \eqref{4eq13}. Where we define $\eqref{4eq13}_{\xi=0}$ by 
\begin{equation*}
\eqref{4eq13}_{\xi=0}:=\tfrac{p^{k/2}}{2}\sum_{\mp}\sum_{f=1}^{\infty}\tfrac{1}{f^3}\sum_{l=1}^{\infty}\tfrac{Kl_{l,f}(0,\mp p^k)}{l^2}
\left\{\int\theta_{\infty}^{\mp}\left(x\right)\left[F\left(\tfrac{lf^2(4p^k)^{-\alpha}}{|x^2\pm1|^{\alpha}}\right)+\tfrac{lf^2p^{-k/2}}{2\sqrt{|x^2\pm1|}}H\left(\tfrac{lf^2(4p^k)^{\alpha-1}}{|x^2\pm1|^{1-\alpha}}\right)\right] dx\right\} 
\end{equation*}

\begin{thm}\label{6thm1}Let $1>\alpha>0$, and $\upsilon>0$ be any number such that $\zeta(u+1)$ does not have any zeros for $|u|<\upsilon$ (Such an $\upsilon$ exists since $\zeta(u+1)$ is non-zero at $u=0$ and the zeta function is meropmorphic.). Let
$\mathcal{C}_{\upsilon}=\begin{setdef}{(0,it)}{t\in (-\infty,-\upsilon)\cup(\upsilon,\infty)}\end{setdef}\cup C_{\upsilon}$, and $C_{\upsilon}$ denotes the left-half of the circle of radius $\upsilon$ around $0$. Then,
\begin{align*}
\eqref{4eq13}_{\xi=0}&=2p^{k/2}\tfrac{\left(1-p^{-(k+1)}\right)}{\left(1-p^{-1}\right)}\sum_{\mp}\int\theta_{\infty}^{\mp}\left(x\right)dx-(k+1)\sum_{\mp}\int_{x^2\pm1>0}\tfrac{\theta_{\infty}^{\mp}(x)}{\sqrt{|x^2\pm1|}}dx\\
&+\tfrac{p^{k/2}}{2}\sum_{\mp}\int\theta_{\infty}^{\mp}\left(x\right)\left[4\tfrac{1}{2\pi i}\int_{(-1)}\tilde{F}(u)\left(\tfrac{(4p^k)^{-\alpha}}{|x^2\pm1|^{\alpha}}\right)^{-u}\tfrac{\zeta(2u+2)}{\zeta(u+2)}\tfrac{\left(1-p^{-(u+1)(k+1)}\right)}{\left(1-p^{-(u+1)}\right)}du\right.\\
&\left.\hspace{1.1in}+\tfrac{2\sqrt{\pi}p^{-k/2}}{\sqrt{|x^2\pm1|}}\tfrac{1}{2\pi i}\int_{\mathcal{C}_{\upsilon}}\tilde{F}(u)\tfrac{\Gamma\left(\frac{\iota_{(x^2\pm1)}+u}{2}\right)}{\Gamma\left(\frac{\iota_{(x^2\pm1)}+1-u}{2}\right)}\left(\tfrac{\pi (4p^k)^{\alpha-1}}{|x^2\pm1|^{1-\alpha}}\right)^{-u}\tfrac{\zeta(2u)}{\zeta(u+1)}\tfrac{\left(1-p^{-u(k+1)}\right)}{\left(1-p^{-u}\right)}du\right]dx
\end{align*}

Where $\iota_{(x^2\pm1)}=0,1$ depending on $x^2\pm1>0$ or $<0$ respectively (as already defined in \eqref{sharp}). 
\end{thm}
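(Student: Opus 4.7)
The plan is to apply Mellin inversion to $F$ and $H_\iota$ to convert $\eqref{4eq13}_{\xi=0}$ into two contour integrals whose inner Dirichlet series is exactly $D(z;\mp p^k)$, evaluate this series via Corollary \ref{pstcor9}, and then shift each contour past its poles, reading off the two explicit terms as residues.

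First I would write $F(y)=\tfrac{1}{2\pi i}\int_{(\sigma)}\tilde{F}(u)y^{-u}du$ for $\sigma>0$ by Lemma \ref{lemmaF} and use the integral expression for $H_\iota$ recorded in Corollary \ref{approxfuneq}. Substituting into the definition of $\eqref{4eq13}_{\xi=0}$ and interchanging the $\sum_{l,f}$-sums with the Mellin contour (justified in a strip where the decay of $\tilde F$ from Lemma \ref{lemmaF} together with the trivial bound $|Kl_{l,f}(0,\mp p^k)|\leq 4lf^2$ gives absolute convergence), the inner sums collapse to $D(u+1;\mp p^k)$ in the $F$-part and to $D(u;\mp p^k)$ in the $H$-part. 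Corollary \ref{pstcor9} with $n=\mp p^k$ (only the prime $p$ contributes and $v_p=k$) rewrites these as
\[4\tfrac{\zeta(2u+2)}{\zeta(u+2)}\tfrac{1-p^{-(u+1)(k+1)}}{1-p^{-(u+1)}}\quad\text{and}\quad 4\tfrac{\zeta(2u)}{\zeta(u+1)}\tfrac{1-p^{-u(k+1)}}{1-p^{-u}},\]
so both integrands become meromorphic on all of $\mathbb{C}$.

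Next I would shift the $F$-contour from $\Re(u)=\sigma$ to $\Re(u)=-1$, crossing the simple pole of $\tilde F$ at $u=0$ and the pole of $\zeta(2u+2)$ at $u=-1/2$. At $u=0$ every factor except the residue of $\tilde F$ specializes to $1$ or to $(1-p^{-(k+1)})/(1-p^{-1})$, and after the outer $\tfrac{p^{k/2}}{2}\sum_\mp\int\theta_\infty^\mp\,dx$ this produces the first announced summand. I would similarly shift the $H$-contour from $\Re(u)=1$ to $\mathcal{C}_\upsilon$, crossing $u=1/2$ (pole of $\zeta(2u)$) and $u=0$; the choice of $\upsilon$ is exactly what keeps $1/\zeta(u+1)$ holomorphic on the circular detour. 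At $u=0$ the gamma ratio $\Gamma((\iota+u)/2)/\Gamma((\iota+1-u)/2)$ is singular only for $\iota=0$, through $\Gamma(u/2)\sim 2/u$; combining this with the simple pole of $\tilde F$, the simple zero of $1/\zeta(u+1)$ at $u=0$, the value $\zeta(0)=-1/2$ and the limit $(1-p^{-u(k+1)})/(1-p^{-u})\to k+1$, the residue is $-(k+1)/\sqrt{\pi}$ for $\iota=0$ and $0$ for $\iota=1$. Multiplication by the prefactor $\sqrt{\pi}/\sqrt{|x^2\pm1|}$ that accompanies the $H$-integral yields exactly the second summand $-(k+1)\sum_\mp\int_{x^2\pm1>0}\theta_\infty^\mp(x)/\sqrt{|x^2\pm1|}\,dx$, the restriction $x^2\pm1>0$ coming from the $\iota=0$ condition.

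The main delicate step will be to show that the two remaining residues, from the $F$-pole at $u=-1/2$ and the $H$-pole at $u=1/2$, exactly cancel. At both points the $H$-gamma ratio equals $1$ (since $\iota+u=\iota+1-u$ when $u=1/2$), the residues of $\zeta(2u+2)$ and $\zeta(2u)$ each contribute a factor $1/2$, and both contributions carry the common arithmetic factor $(1-p^{-(k+1)/2})/[\zeta(3/2)(1-p^{-1/2})]$. The identity $(4p^k)^{(1-\alpha)/2}=2p^{k/2}(4p^k)^{-\alpha/2}$, combined with $|x^2\pm1|^{(1-\alpha)/2}/\sqrt{|x^2\pm1|}=|x^2\pm1|^{-\alpha/2}$, matches the $x$- and $\alpha$-dependence of the two pieces; the surviving coefficient is then proportional to $\tilde F(-1/2)+\tilde F(1/2)$, which vanishes by oddness of $\tilde F$ (Lemma \ref{lemmaF}). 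The convergence of the shifted contours and the various interchanges are standard consequences of the bound $\tilde F(u)\ll|u|^{|\sigma|-1}e^{-\pi|t|/2}$ and Stirling's formula, as already used in Lemma \ref{afelem2}; the hardest accounting is exactly this sign-cancellation, where every numerical constant and every power of $\alpha$ must be tracked for the odd-function identity to close the argument.
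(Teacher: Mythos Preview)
Your proposal is correct and follows essentially the same route as the paper: Mellin inversion on $F$ and $H_\iota$, identification of the inner sums with $D(u+1;\mp p^k)$ and $D(u;\mp p^k)$ via Corollary~\ref{pstcor9}, contour shifts to $\Re(u)=-1$ and $\mathcal{C}_\upsilon$, and cancellation of the $u=\pm\tfrac12$ residues by the oddness of $\tilde F$. The only cosmetic difference is that the paper splits explicitly into the cases $x^2\pm1\lessgtr0$ before shifting, whereas you carry the index $\iota$ uniformly; your treatment of the $u=0$ residue in the $H$-part (distinguishing $\iota=0$ from $\iota=1$) is in fact slightly more detailed than the paper's.
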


\begin{proof}
The $\xi=0$ term in \eqref{4eq13} is
\[\tfrac{p^{k/2}}{2}\sum_{\mp}\sum_{f=1}^{\infty}\tfrac{1}{f^3}\sum_{l=1}^{\infty}\tfrac{1}{l^2}
\left\{\int\theta_{\infty}^{\mp}\left(x\right)\left[F\left(\tfrac{lf^2(4p^k)^{-\alpha}}{|x^2\pm1|^{\alpha}}\right)+\tfrac{lf^2p^{-k/2}}{2\sqrt{|x^2\pm1|}}H\left(\tfrac{lf^2(4p^k)^{\alpha-1}}{|x^2\pm1|^{1-\alpha}}\right)\right]dx\right\} \cdot Kl_{l,f}(0,\mp p^k)\tag{$\circ$}\label{circ}\]
Where $H=H_0$ if $x^2\pm1>0$ and $H=H_1$ if $x^2\pm1<0$ (cf. Theorem \ref{pois}). Let $\tilde{F}$ denote the Mellin transform of $F$. By Lemma \ref{lemmaF}, $\tilde{F}(z)$ is holomorphic for $\Re(z)>0$. Therefore by Mellin inversion we have
\[F(y)=\tfrac{1}{2\pi i }\int_{(1)}\tilde{F}(u)y^{-u}du\]
Also recall that 
\begin{align*}
H_{0}(y)&=\tfrac{\sqrt{\pi}}{2\pi i}\int_{(1)}\tfrac{\Gamma\left(\frac{u}{2}\right)}{\Gamma\left(\frac{1-u}{2}\right)}(\pi y)^{-u}\tilde{F}\left(u\right)du\\
H_{1}(y)&=\tfrac{\sqrt{\pi}}{2\pi i}\int_{(1)}\tfrac{\Gamma\left(\frac{1+u}{2}\right)}{\Gamma\left(\frac{2-u}{2}\right)}(\pi y)^{-u}\tilde{F}\left(u\right)du
\end{align*}
We will need to distinguish into cases according to $x^2\pm1<0$ or not. In the first case we have $H=H_1$ and in the second $H=H_0$. We also note that when the sign in the first sum in \eqref{circ} is $-$ we necessarily have $x^2+1>0$. 

\begin{itemize}

\item $x^2\pm1<0$. 

As we have noted above, in this case we necessarily have the sign $+$ in the first sum of \eqref{circ}. Therefore we have,

\begin{multline*}
\tfrac{p^{k/2}}{2}\sum_{f=1}^{\infty}\tfrac{1}{f^3}\sum_{l=1}^{\infty}\tfrac{Kl_{l,f}(0, p^k)}{l^2}\\
\left\{\int_{|x|<1}\theta_{\infty}^{+}\left(x\right)\tfrac{1}{2\pi i}\int_{(1)}\tilde{F}(u)\left[\left(\tfrac{lf^2(4p^k)^{-\alpha}}{(1-x^2)^{\alpha}}\right)^{-u}+\tfrac{\sqrt{\pi}lf^2p^{-k/2}}{2\sqrt{1-x^2}}\tfrac{\Gamma\left(\frac{1+u}{2}\right)}{\Gamma\left(\frac{2-u}{2}\right)}\left(\tfrac{\pi lf^2(4p^k)^{\alpha-1}}{(1-x^2)^{1-\alpha}}\right)^{-u}\right]dudx\right\} \tag{$\circ_1$}\label{circ'}
\end{multline*}
Note that the integrand in the $u$-integral is holomorphic for $\Re(u)>0$ therefore we can move the u contour to right without changing the value of the integral. Then, by moving the contour to $\Re(u)=c>1$ and using the trivial bound $|Kl_{l,f}(0,n)|<4lf^2$ we can ensure that the $l$ and $f$-sums and the integrals converge absolutely and bring the sums into the integrals and get
\begin{multline*}
\tfrac{p^{k/2}}{2}\int_{|x|<1}\theta_{\infty}^{+}\left(x\right)\left[\tfrac{1}{2\pi i}\int_{(c)}\tilde{F}(u)\left(\tfrac{(4p^k)^{-\alpha}}{(1-x^2)^{\alpha}}\right)^{-u}D(u+1;p^k)du\right.\\
\left.+\tfrac{\sqrt{\pi}p^{-k/2}}{2\sqrt{1-x^2}}\tfrac{1}{2\pi i}\int_{(c)}\tilde{F}(u)\tfrac{\Gamma\left(\frac{1+u}{2}\right)}{\Gamma\left(\frac{2-u}{2}\right)}\left(\tfrac{\pi (4p^k)^{\alpha-1}}{(1-x^2)^{1-\alpha}}\right)^{-u}D(u;p^k)du\right]dx
\end{multline*}
Where $D(u;p^k)$ is as in \eqref{5eq14}. Using Corollary \ref{pstcor9} we see that this is equal to 
\begin{multline*}
\tfrac{p^{k/2}}{2}\int_{|x|<1}\theta_{\infty}^{+}\left(x\right)\left[4\tfrac{1}{2\pi i}\int_{(c)}\tilde{F}(u)\left(\tfrac{(4p^k)^{-\alpha}}{(1-x^2)^{\alpha}}\right)^{-u}\tfrac{\zeta(2u+2)}{\zeta(u+2)}\tfrac{\left(1-p^{-(u+1)(k+1)}\right)}{\left(1-p^{-(u+1)}\right)}du\right.\\
\left.+\tfrac{2\sqrt{\pi}p^{-k/2}}{\sqrt{1-x^2}}\tfrac{1}{2\pi i}\int_{(c)}\tilde{F}(u)\tfrac{\Gamma\left(\frac{1+u}{2}\right)}{\Gamma\left(\frac{2-u}{2}\right)}\left(\tfrac{\pi (4p^k)^{\alpha-1}}{(1-x^2)^{1-\alpha}}\right)^{-u}\tfrac{\zeta(2u)}{\zeta(u+1)}\tfrac{\left(1-p^{-u(k+1)}\right)}{\left(1-p^{-u}\right)}du\right]dx
\end{multline*}
Now note the following:
\begin{itemize}
\item $\tfrac{\Gamma\left(\frac{1+u}{2}\right)\zeta(2u)}{\Gamma\left(\frac{2-u}{2}\right)\zeta(u+1)}\tfrac{\left(1-p^{-u(k+1)}\right)}{\left(1-p^{-u}\right)}$ has a simple pole at $u=\tfrac{1}{2}$ with residue $\tfrac{1-p^{-(k+1)/2}}{2\zeta\left(\frac32\right)(1-p^{-1/2})}$ and is holomorphic on and to the right of the line $\Re(u)=0$.
\item By Lemma \ref{lemmaF}, $\tilde{F}(u)$ has a simple pole at $u=0$ with residue $1$ and is holomorphic otherwise. Note that in this case we also have $\lim_{u\rightarrow0}\tfrac{\Gamma\left(\frac{1+u}{2}\right)}{\Gamma\left(\frac{2-u}{2}\right)}\tfrac{\zeta(2u)}{\zeta(u+1)}\tfrac{\left(1-p^{-u(k+1)}\right)}{\left(1-p^{-u}\right)}=0$.
\item $\tfrac{\zeta(2u+2)\left(1-p^{-(u+1)(k+1)}\right)}{\zeta(u+2)\left(1-p^{-(u+1)}\right)}$ has a simple pole at $u=\tfrac{-1}{2}$ with residue $\tfrac{1-p^{-(k+1)/2}}{2\zeta(\frac32)(1-p^{-1/2})}$ and is holomorphic on and to the right of the line $\Re(u)=-1$.
\item The rest of the functions (of the variable $u$) in the first integral are holomorphic everywhere on and to the right of the line $\Re(u)=-1$, and in the second integral on and to the right of the line $\Re(u)=0$.
\end{itemize} 
Therefore by shifting the $u$-contour of the first integral to $\Re(u)=-1$ and the second to $\mathcal{C}_{\upsilon}$ we get,
\begin{align*}
\eqref{circ'}&=\tfrac{(4p^k)^{\frac{1-\alpha}{2}}\tilde{F}\left(\frac12\right)(1-p^{-(k+1)/2})}{2\zeta\left(\frac32\right)(1-p^{-1/2})}\int_{|x|<1}\tfrac{\theta_{\infty}^{+}(x)}{(1-x^2)^{\alpha/2}}dx+2p^{k/2}\tfrac{\left(1-p^{-(k+1)}\right)}{\left(1-p^{-1}\right)}\int_{|x|<1}\theta_{\infty}^{+}\left(x\right)dx\\
&+\tfrac{(4p^k)^{\frac{1-\alpha}{2}}\tilde{F}\left(\frac{-1}{2}\right)(1-p^{-(k+1)/2})}{2\zeta\left(\frac32\right)(1-p^{-1/2})}\int_{|x|<1}\tfrac{\theta_{\infty}^+(x)}{(1-x^2)^{\alpha/2}}dx\\
&+\tfrac{p^{k/2}}{2}\int_{|x|<1}\theta_{\infty}^{+}\left(x\right)\left[4\tfrac{1}{2\pi i}\int_{(-1)}\tilde{F}(u)\left(\tfrac{(4p^k)^{-\alpha}}{(1-x^2)^{\alpha}}\right)^{-u}\tfrac{\zeta(2u+2)}{\zeta(u+2)}\tfrac{\left(1-p^{-(u+1)(k+1)}\right)}{\left(1-p^{-(u+1)}\right)}du\right.\\
&\hspace{1.1in}\left.+\tfrac{2\sqrt{\pi}p^{-k/2}}{\sqrt{1-x^2}}\tfrac{1}{2\pi i}\int_{\mathcal{C}_{\upsilon}}\tilde{F}(u)\tfrac{\Gamma\left(\frac{1+u}{2}\right)}{\Gamma\left(\frac{2-u}{2}\right)}\left(\tfrac{\pi (4p^k)^{\alpha-1}}{(1-x^2)^{1-\alpha}}\right)^{-u}\tfrac{\zeta(2u)}{\zeta(u+1)}\tfrac{\left(1-p^{-u(k+1)}\right)}{\left(1-p^{-u}\right)}du\right]dx
\end{align*}
Finally recall that by Lemma \ref{lemmaF} $\tilde{F}$ is odd\footnote{We note that the oddness of $\tilde{F}$ is completely peripheral for the argument. The whole argument is valid for an arbitrary choice of $F$ and $\tilde{F}$. If $\tilde{F}$ is not odd, then we would get $-\tilde{F}(-u)$ in the dual part of the approximate functional equation.}, and therefore the first and the third terms above cancel and we get
\begin{align*}
\eqref{circ'}&=2p^{k/2}\tfrac{\left(1-p^{-(k+1)}\right)}{\left(1-p^{-1}\right)}\int_{|x|<1}\theta_{\infty}^{+}\left(x\right)dx\\
&+\tfrac{p^{k/2}}{2}\int_{|x|<1}\theta_{\infty}^{+}\left(x\right)\left[4\tfrac{1}{2\pi i}\int_{(-1)}\tilde{F}(u)\left(\tfrac{(4p^k)^{-\alpha}}{(1-x^2)^{\alpha}}\right)^{-u}\tfrac{\zeta(2u+2)}{\zeta(u+2)}\tfrac{\left(1-p^{-(u+1)(k+1)}\right)}{\left(1-p^{-(u+1)}\right)}du\right.\\
&\hspace{1.1in}\left.+\tfrac{2\sqrt{\pi}p^{-k/2}}{\sqrt{1-x^2}}\tfrac{1}{2\pi i}\int_{\mathcal{C}_{\upsilon}}\tilde{F}(u)\tfrac{\Gamma\left(\frac{1+u}{2}\right)}{\Gamma\left(\frac{2-u}{2}\right)}\left(\tfrac{\pi (4p^k)^{\alpha-1}}{(1-x^2)^{1-\alpha}}\right)^{-u}\tfrac{\zeta(2u)}{\zeta(u+1)}\tfrac{\left(1-p^{-u(k+1)}\right)}{\left(1-p^{-u}\right)}du\right]dx
\end{align*}

\item $x^2\pm1 >0$.

In this case we have,
\begin{multline*}
\tfrac{p^{k/2}}{2}\sum_{\mp}\sum_{f=1}^{\infty}\tfrac{1}{f^3}\sum_{l=1}^{\infty}\tfrac{Kl_{l,f}(0, p^k)}{l^2}\left\{\int_{x^2\pm1>0}\theta_{\infty}^{\mp}\left(x\right)\left[\tfrac{1}{2\pi i}\int_{(1)}\tilde{F}(u)\left(\tfrac{lf^2(4p^k)^{-\alpha}}{(x^2\pm1)^{\alpha}}\right)^{-u}du\right.\right.\\
\left.\left.+\tfrac{\sqrt{\pi}lf^2p^{-k/2}}{2\sqrt{x^2\pm1}}\tfrac{1}{2\pi i}\int_{(1)}\tilde{F}(u)\tfrac{\Gamma\left(\frac{u}{2}\right)}{\Gamma\left(\frac{1-u}{2}\right)}\left(\tfrac{\pi lf^2(4p^k)^{\alpha-1}}{(x^2\pm1)^{1-\alpha}}\right)^{-u}du\right]dx\right\} \tag{$\circ_2$}\label{circ''}
\end{multline*}
We proceed as above. Shifting the contour right to $\Re(u)=c>1$, then interchanging the $l$ and $f$-sums with the integrals and using Corollary \ref{pstcor9} results in

\begin{multline*}
\tfrac{p^{k/2}}{2}\sum_{\mp}\int_{x^2\pm1>0}\theta_{\infty}^{\mp}\left(x\right)\left[4\tfrac{1}{2\pi i}\int_{(c)}\tilde{F}(u)\left(\tfrac{(4p^k)^{-\alpha}}{(x^2\pm1)^{\alpha}}\right)^{-u}\tfrac{\zeta(2u+2)}{\zeta(u+2)}\tfrac{\left(1-p^{-(u+1)(k+1)}\right)}{\left(1-p^{-(u+1)}\right)}du\right.\\
\left.+\tfrac{2\sqrt{\pi}p^{-k/2}}{\sqrt{x^2\pm1}}\tfrac{1}{2\pi i}\int_{(c)}\tilde{F}(u)\tfrac{\Gamma\left(\frac{u}{2}\right)}{\Gamma\left(\frac{1-u}{2}\right)}\left(\tfrac{\pi (4p^k)^{\alpha-1}}{(x^2\pm1)^{1-\alpha}}\right)^{-u}\tfrac{\zeta(2u)}{\zeta(u+1)}\tfrac{\left(1-p^{-u(k+1)}\right)}{\left(1-p^{-u}\right)}du\right]dx
\end{multline*}
For what follows we will need to

Now note that:
\begin{itemize}
\item $\tfrac{\Gamma\left(\frac{u}{2}\right)\zeta(2u)}{\Gamma\left(\frac{1-u}{2}\right)\zeta(u+1)}\tfrac{\left(1-p^{-u(k+1)}\right)}{\left(1-p^{-u}\right)}$ has a simple pole at $u=\tfrac{1}{2}$ with residue $\tfrac{1-p^{-(k+1)/2}}{2\zeta\left(\frac32\right)(1-p^{-1/2})}$ and is holomorphic on and to the right of the line $\Re(u)=0$.
\item By Lemma \ref{lemmaF}, $\tilde{F}(u)$ has a simple pole at $u=0$ with residue $1$ and is holomorphic otherwise. On the other hand $\Gamma\left(\tfrac{u}{2}\right)$ has a simple pole with residue $2$ at $u=0$. Finally we see that $\tfrac{\zeta(2u)}{\zeta(u+1)}\tfrac{\left(1-p^{-u(k+1)}\right)}{\left(1-p^{-u}\right)}=u\zeta(2u)(k+1)+O(u^2)$ around $u=0$. Therefore $\tilde{F}(u)\tfrac{\Gamma\left(\frac{u}{2}\right)}{\Gamma\left(\frac{1-u}{2}\right)}\tfrac{\zeta(2u)}{\zeta(u+1)}\tfrac{\left(1-p^{-u(k+1)}\right)}{\left(1-p^{-u}\right)}$ has a simple pole at $u=0$ with residue $-\tfrac{k+1}{\sqrt{\pi}}$.
\item $\tfrac{\zeta(2u+2)\left(1-p^{-(u+1)(k+1)}\right)}{\zeta(u+2)\left(1-p^{-(u+1)}\right)}$ has a simple pole at $u=\tfrac{-1}{2}$ with residue $\tfrac{1-p^{-(k+1)/2}}{2\zeta(\frac32)(1-p^{-1/2})}$ and is holomorphic on and to the right of the line $\Re(u)=-1$.
\item The rest of the functions (of the variable $u$) in the first integral are holomorphic on and to the right of the line $\Re(u)=-1$, and in the second integral are holomorphic on and to the right of the line $\Re(u)=0$.
\end{itemize} 
Therefore shifting the first contour to $\Re(u)=-1$ and the second to $\mathcal{C}_{\upsilon}$ we get,
\begin{align*}
\eqref{circ''}&=\tfrac{(4p^k)^{\frac{1-\alpha}{2}}\tilde{F}\left(\frac12\right)(1-p^{-(k+1)/2})}{2\zeta\left(\frac32\right)(1-p^{-1/2})}\sum_{\mp}\int_{x^2\pm1>0}\tfrac{\theta_{\infty}^{\mp}(x)}{(x^2\pm1)^{\alpha/2}}dx+2p^{k/2}\tfrac{\left(1-p^{-(k+1)}\right)}{\left(1-p^{-1}\right)}\sum_{\mp}\int_{x^2\pm1>0}\theta_{\infty}^{\mp}\left(x\right)dx\\
&-(k+1)\sum_{\mp}\int_{x^2\pm1>0}\tfrac{\theta_{\infty}^{\mp}(x)}{\sqrt{x^2\pm1}}dx+\tfrac{(4p^k)^{\frac{1-\alpha}{2}}\tilde{F}\left(\frac{-1}{2}\right)(1-p^{-(k+1)/2})}{2\zeta\left(\frac32\right)(1-p^{-1/2})}\sum_{\mp}\int_{x^2\pm1>0}\tfrac{\theta_{\infty}^{\mp}(x)}{(x^2\pm1)^{\alpha/2}}dx\\
&+\tfrac{p^{k/2}}{2}\sum_{\mp}\int_{x^2\pm1>0}\theta_{\infty}^{\mp}\left(x\right)\left[4\tfrac{1}{2\pi i}\int_{(-1)}\tilde{F}(u)\left(\tfrac{(4p^k)^{-\alpha}}{(x^2\pm1)^{\alpha}}\right)^{-u}\tfrac{\zeta(2u+2)}{\zeta(u+2)}\tfrac{\left(1-p^{-(u+1)(k+1)}\right)}{\left(1-p^{-(u+1)}\right)}du\right.\\
&\left.\hspace{1.1in}+\tfrac{2\sqrt{\pi}p^{-k/2}}{\sqrt{x^2\pm1}}\tfrac{1}{2\pi i}\int_{\mathcal{C}_{\upsilon}}\tilde{F}(u)\tfrac{\Gamma\left(\frac{u}{2}\right)}{\Gamma\left(\frac{1-u}{2}\right)}\left(\tfrac{\pi (4p^k)^{\alpha-1}}{(x^2\pm1)^{1-\alpha}}\right)^{-u}\tfrac{\zeta(2u)}{\zeta(u+1)}\tfrac{\left(1-p^{-u(k+1)}\right)}{\left(1-p^{-u}\right)}du\right]dx
\end{align*}

The first and fourth terms in the above sum cancel because $\tilde{F}$ is odd (Once again this is not essential to the argument. See the footnote above.), and we get

\begin{align*}
\eqref{circ''}&=2p^{k/2}\tfrac{\left(1-p^{-(k+1)}\right)}{\left(1-p^{-1}\right)}\sum_{\mp}\int_{x^2\pm1>0}\theta_{\infty}^{\mp}\left(x\right)dx-(k+1)\sum_{\mp}\int_{x^2\pm1>0}\tfrac{\theta_{\infty}^{\mp}(x)}{\sqrt{x^2\pm1}}dx\\
&+\tfrac{p^{k/2}}{2}\sum_{\mp}\int_{x^2\pm1>0}\theta_{\infty}^{\mp}\left(x\right)\left[4\tfrac{1}{2\pi i}\int_{(-1)}\tilde{F}(u)\left(\tfrac{(4p^k)^{-\alpha}}{(x^2\pm1)^{\alpha}}\right)^{-u}\tfrac{\zeta(2u+2)}{\zeta(u+2)}\tfrac{\left(1-p^{-(u+1)(k+1)}\right)}{\left(1-p^{-(u+1)}\right)}du\right.\\
&\left.\hspace{1.1in}+\tfrac{2\sqrt{\pi}p^{-k/2}}{\sqrt{x^2\pm1}}\tfrac{1}{2\pi i}\int_{\mathcal{C}_{\upsilon}}\tilde{F}(u)\tfrac{\Gamma\left(\frac{u}{2}\right)}{\Gamma\left(\frac{1-u}{2}\right)}\left(\tfrac{\pi (4p^k)^{\alpha-1}}{(x^2\pm1)^{1-\alpha}}\right)^{-u}\tfrac{\zeta(2u)}{\zeta(u+1)}\tfrac{\left(1-p^{-u(k+1)}\right)}{\left(1-p^{-u}\right)}du\right]dx
\end{align*}

\end{itemize}

Summing \eqref{circ'} and \eqref{circ''} finishes the proof.

\end{proof}

Finally we have the following auxiliary lemma that identifies the contribution of the special representations in the sum in Theorem \ref{6thm1}.

\begin{lemma}\label{6cor1}Let $\tr(\textbf{1}(f^{p,k}))$ is the contribution of the trivial representation, and $\tr(\xi_0(f^{p,k}))$ is the contribution to the trace formula by the residues of the Eisenstein series as explained on pg.25 of \cite{L1}. Then,

\begin{align*}
\tr(\textbf{1}(f^{p,k}))&=2p^{k/2}\tfrac{\left(1-p^{-(k+1)}\right)}{\left(1-p^{-1}\right)}\sum_{\mp}\int\theta_{\infty}^{\mp}\left(x\right)dx\\
\tr(\xi_0(f^{p,k}))&=\tfrac{k+1}{2}\sum_{\mp}\int_{x^2\pm1>0}\tfrac{\theta_{\infty}^{\mp}(x)}{\sqrt{|x^2\pm1|}}dx\\
\end{align*}

\end{lemma}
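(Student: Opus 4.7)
My plan is to prove the two identities independently. For the trivial representation, since $\textbf{1}$ is one-dimensional and $f^{p,k}$ is a tensor product, one has the factorization $\tr(\textbf{1}(f^{p,k})) = \tr(\textbf{1}_\infty(f_\infty))\prod_q \tr(\textbf{1}_q(f_q^{p,k}))$. At each finite place $q\neq p$ the local trace equals $\int_{G(\mathbb{Q}_q)}f_q^{(0)}\,dg_q = \text{vol}(GL_2(\mathbb{Z}_q)) = 1$ with the measure normalization of \S\ref{measnorm}. At $p$, the local trace is $p^{-k/2}$ times the Hecke eigenvalue of $f_p^{(k)}$ on the spherical line of $\textbf{1}_p$, i.e.\ $p^{-k/2}\tr\sym^k(\mathrm{diag}(p^{1/2},p^{-1/2})) = p^{-k/2}\sum_{i=0}^{k}p^{i} = p^{k/2}\frac{1-p^{-(k+1)}}{1-p^{-1}}$. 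The archimedean factor equals $\int_{Z_+\backslash G(\mathbb{R})}f_\infty(g)\,dg$; by Weyl integration on $Z_+\backslash G(\mathbb{R})$ with the torus measures fixed in \S\ref{measnorm}, followed by the change of variable $x=m_\gamma/\sqrt{|N_\gamma|}$ from \S\ref{secsing}, it equals $2\sum_\mp\int\theta_\infty^\mp(x)\,dx$, the Weyl-group factor $1/|W|=1/2$ cancelling against the factor $2$ in the defining relation between $\theta_\infty^\mp$ and the orbital integral. Multiplying these local factors yields the first identity.

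For $\tr(\xi_0(f^{p,k}))$, I would start from the explicit expression on p.\,25 of \cite{L1} for the residue-of-Eisenstein-series contribution to the Jacquet--Langlands trace formula, which is a product of local scalars at the non-archimedean places together with an archimedean integral against $f_\infty$. At $p$, the scalar is $\tr\sym^k$ evaluated at the Satake parameter extracted from the pole at $s=1$ of the standard intertwining operator in the principal series; after the $p^{-k/2}$ normalization this simplifies to a constant multiple of $\dim\sym^k = k+1$. At $\infty$, only the $g_2^\mp$-piece of the orbital integral decomposition \eqref{2starstar} contributes, since the Eisenstein series is parabolically induced from the split torus --- which accounts for the restriction $x^2\pm1>0$. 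Re-expressing that piece through $\theta_\infty^\mp$ produces the integrand $\theta_\infty^\mp(x)/\sqrt{|x^2\pm1|}$, and a careful normalization check yields the claimed coefficient $(k+1)/2$.

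The main obstacle is the second step: reconciling the intertwining-operator and Plancherel-measure conventions of \cite{L1, JL} with the normalizations of \S\ref{measnorm}, so that the archimedean integral comes out with coefficient exactly $(k+1)/2$ and the integration region is precisely the split locus $x^2\pm1>0$. Once this is secured, both identities combine with Theorem \ref{6thm1} to identify the two non-integral terms appearing there as $\tr(\textbf{1}(f^{p,k}))$ and twice $\tr(\xi_0(f^{p,k}))$, supplying the input needed for the statement of Theorem \ref{mainthm}.
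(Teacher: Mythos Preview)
Your approach is genuinely different from the paper's, and in a sense more ambitious. The paper does not derive either identity from first principles: for the first it simply unfolds $\theta_\infty^\mp$ via \eqref{2starstar} into the $g_1^\mp,g_2^\mp$ decomposition and then matches the resulting expression against equation~(65) of \cite{L1} (which already records $\tr(\textbf{1}(f^{p,k}))$ in those coordinates); for the second it just observes that equation~(31) of \cite{L1} coincides with the right-hand side after the notational translation $m\leftrightarrow k$. In other words, the paper treats the lemma as a dictionary entry, not a computation.

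Your local factorization for $\tr(\textbf{1}(f^{p,k}))$ is correct and more informative: the non-archimedean factors are exactly as you write, and the Satake/Hecke calculation at $p$ giving $p^{k/2}(1-p^{-(k+1)})/(1-p^{-1})$ is right. The archimedean step via Weyl integration is the correct mechanism, but it is precisely here that your argument and the paper's diverge in difficulty: the Weyl integration on $Z_+\backslash G(\mathbb{R})$ with the torus measures of \S\ref{measnorm} involves summing over both the split and the compact Cartan (and over both signs of the determinant), and the Jacobian under $x=m_\gamma/\sqrt{|N_\gamma|}$ must be tracked against the discriminant factor $|D(t)|=4|x^2\pm1|$ and the Weyl-group constants. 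This bookkeeping is exactly what equation~(65) of \cite{L1} packages, so the paper is in effect outsourcing your Weyl-integration step to that reference. What your route buys is self-containment; what the paper's buys is brevity.

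For the second identity you correctly flag the normalization issue as the obstacle, and indeed the paper makes no attempt to resolve it either --- it simply cites (31) of \cite{L1}. One small correction to your heuristic: the restriction to $x^2\pm1>0$ and the disappearance of the $g_1^\mp$-piece are not really consequences of parabolic induction per se, but follow directly from the support statement after \eqref{2bullet} that $g_1$ is supported only on the elliptic torus (so $g_1^\mp$ vanishes identically on the split locus $x^2\pm1>0$, whence $\theta_\infty^\mp/\sqrt{|x^2\pm1|}$ there equals $g_2^\mp/\sqrt{|x^2\pm1|}=2\,Orb(f_\infty;\gamma)$). Your final remark about the factor of two --- that the $-(k+1)$ term in Theorem~\ref{6thm1} equals $-2\tr(\xi_0(f^{p,k}))$ --- is correct and is exactly how the statement of Theorem~\ref{mainthm} is assembled.
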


\begin{proof}

We start with the trivial representation. Recall \eqref{2starstar} that $\theta_{\infty}^{\mp}(x)=2|x^2\pm|^{1/2}g_1^{\mp}(x)+g_2^{\mp}(x)$, where $\gamma_{(\mp1,x)}$ is as in \eqref{2star}. Then,
\begin{align*}
2p^{k/2}\tfrac{\left(1-p^{-(k+1)}\right)}{\left(1-p^{-1}\right)}\sum_{\mp}\int\theta_{\infty}^{\mp}\left(x\right)dx&=2p^{k/2}\tfrac{\left(1-p^{-(k+1)}\right)}{\left(1-p^{-1}\right)}\sum_{\mp}\int (2|x^2\pm1|^{1/2}g_1^{\mp}(x)+g_2^{\mp}(x))dx\\
&=4p^{k/2}\tfrac{\left(1-p^{-(k+1)}\right)}{\left(1-p^{-1}\right)}\sum_{\mp}\int \left(g_1^{\mp}(x)+\tfrac{g_2^{\mp}(x)}{2|x^2\pm1|^{1/2}}\right)|x^2\pm1|^{1/2}dx\tag{$*$}\label{triv}
\end{align*}

Now a quick comparison of \eqref{triv} with equation\footnote{There is a misprint in equation (65) of \cite{L1}, the exponent $m$ should be $m+1$.}  (65) of \cite{L1} (using equation (26) of the same reference) shows that $\eqref{triv}=\tr(\textbf{1}(f^{p,k}))$.

For the second equality we only need to note that the integer we denote by $k$ is denoted by $m$ in \cite{L1}, and equation (31) of \cite{L1} is equal to $\tfrac{k+1}{2}\sum_{\mp}\int_{x^2\pm1>0}\tfrac{\theta_{\infty}^{\mp}(x)}{\sqrt{|x^2\pm1|}}dx$.

\end{proof}

Finally Theorem \ref{6thm1} combined with Lemma \ref{6cor1} finishes the proof of Theorem \ref{mainthm}.

\end{section}

\vspace{0.4in}

Salim Ali Altu\u{g}\hspace{0.4in} \textsf{altug@math.columbia.edu}\\
Mathematics Department, Columbia University, 2990 Broadway, New York, NY, USA 10027\\


\begin{thebibliography}{10}

\bibitem{Ali1}
S. A. Altu\u{g}, Beyond Endoscopy via the Trace Formula - II The Standard Representation, In preparation.

\bibitem{Ali2}
S. A. Altu\u{g}, A New look at the bounds towards Ramanujan conjecture via the trace formula, In preparation.

\bibitem{A}
J. Arthur, An introduction to the trace formula, \textit{in Harmonic analysis, the trace formula and Shimura varieties}, pp. 1-265, Clay Mathematics Institute Proceedings, Cambridge, MA, 2005.

\bibitem{A2}
J. Arthur, The Endoscopic Classification of Representations, Available at: http://www.claymath.org/cw/arthur/pdf/arthur-endoscopic-tifr.pdf

\bibitem{Bo}
A. Borel, H, Jacquet, Automorphic forms and automorphic representations, \textit{in Automorphic forms, representations and $L$-functions}, pp. 189-202, Proceedings of Symposia in Pure Mathematics, vol. 33 part I, American Mathematical Society, Corvallis, OR, 1979.

\bibitem{By}
V. A. Bykovskii, Density theorems and the mean value of arithmetic functions on short intervals. (Russian) \textit{Zap. Nauchn. Sem. S.-Peterburg. Otdel. Mat. Inst. Steklov. (POMI) 212 (1994), Anal. Teor. Chisel i Teor. Funktsii.} 12, 56Ð70, 196; translation in \textit{J. Math. Sci.} (New York) 83 (1997), no. 6, 720Ð730.

\bibitem{FLN}
E. Frenkel, R. P. Langlands, B. C. Ng\^o, La formule des traces et la functorialit\'e. Le debut
dÕun Programme. \textit{Ann. Sci. Math. Qu\'ebec} \textbf{34} (2010), 199-243. 

\bibitem{Her}
P. E. Herman, Quadratic base change and the analytic continuation of the Asai L-function: A new trace formula approach, Available at arXiv:1008.3921v2.


\bibitem{IK}
H. Iwaniec, E. Kowalski, \textit{Analytic number theory}, AMS Colloquium Publications, vol. 53, 2004.

\bibitem{JL}
H. Jacquet, R. P. Langlands, \textit{Automorphic forms on $GL(2)$}, Lecture Notes in Mathematics \textbf{114}, Springer Verlag, 1970.

\bibitem{K}
A. Knapp, \textit{Representation theory of semi simple groups: An overview based on examples}, Princeton University Press, Princeton, NJ, 1986.

\bibitem{Lab}
J-P. Labesse, \textit{Introduction to endoscopy}, Notes for Snowbird lectures 2006, Available at: http://www.institut.math.jussieu.fr/projets/fa/bpFiles/Labesse.pdf.

\bibitem{L1}
R. P. Langlands, Beyond Endoscopy, \textit{in Contributions to automorphic forms, geometry and number theory}, pp. 611-697, Johns Hopkins University Press, Baltimore, MD, 2004.

\bibitem{L3}
Langlands, Singularit\'es et Transfert. Available at: \\http://publications.ias.edu/sites/default/files/transfert\_1.pdf.

\bibitem{S}
P. Sarnak, Comments on Langlands' Lecture ``Endoscopy and Beyond". Available at: http://publications.ias.edu/sites/default/files/SarnakLectureNotes-1.pdf.

\bibitem{Sa1}
P. Sarnak, Notes on the Generalized Ramanujan Conjectures. In \emph{Harmonic Analysis, The Trace Formula and Shimura Varieties,} pp. 659-687. Clay Mathematics Proceedings vol. \textbf{4}, Cambridge, MA, 2005.

\bibitem{Sh}
D. Shealsted, Orbital integrals for $GL_2(\mathbb{R})$, \textit{in Automorphic forms, representations and $L$-functions}, pp. 189-202, Proceedings of Symposia in Pure Mathematics, vol. 33 part I, American Mathematical Society, Corvallis, OR, 1979.

\bibitem{SY}
K. Soundararajan, M. Young, The prime geodesic theorem, Available at: arXiv:1011.5486v1.

\bibitem{SS}
R. Shakarchi, E. Stein, \textit{Complex analysis}, Princeton Lectures in Analysis \textbf{II}, Princeton University Press, Princeton, NJ, 2003.

\bibitem{Ven}
A. Venkatesh, Beyond endoscopy and special forms on $GL(2)$. \emph{Journal f\"ur die reine und angewandte Mathematik (Crelles Journal)} \textbf{577} (2004), 23-80.

\bibitem{Za}
D. Zagier, Modular forms whose Fourier coefficients involve zeta-functions of quadratic fields. \textit{Modular functions of one variable, VI (Proc. Second Internat. Conf., Univ. Bonn, Bonn, 1976)}, pp. 105Ð169, Lecture Notes in Mathematics, Vol. 627, Springer Verlag, Berlin, 1977.
\end{thebibliography}
\end{document}